\newcommand{\comments}[1]{}
\theoremstyle{plain}
\newtheorem{theorem}{Theorem}[section]
\newtheorem{lemma}[theorem]{Lemma}
\newtheorem{proposition}[theorem]{Proposition}
\newtheorem{definition}[theorem]{Definition}
\newtheorem{corollary}[theorem]{Corollary}
\newtheorem{remark}[theorem]{Remark}
\newtheorem{conjecture}[theorem]{Conjecture}
\DeclareSymbolFont{AMSb}{U}{msb}{m}{n}
\DeclareMathSymbol{\F}{\mathbin}{AMSb}{"46}
\DeclareMathSymbol{\N}{\mathbin}{AMSb}{"4E}
\DeclareMathSymbol{\Z}{\mathbin}{AMSb}{"5A}
\DeclareMathSymbol{\R}{\mathbin}{AMSb}{"52}
\DeclareMathSymbol{\Q}{\mathbin}{AMSb}{"51}
\DeclareMathSymbol{\I}{\mathbin}{AMSb}{"49}
\DeclareMathSymbol{\C}{\mathbin}{AMSb}{"43}
\DeclareMathSymbol{\G}{\mathbin}{AMSb}{"47}
\DeclareMathAlphabet{\mathscr}{OT1}{pzc}{m}{it}
\newcommand{\ra}{\rightarrow}
\newcommand{\xra}[1]{\xrightarrow{#1}}
\newcommand{\ol}[1]{\overline{#1}}
\newcommand{\brkt}[1]{\langle #1 \rangle}
\newcommand{\isom}{\simeq}
\newcommand{\tensor}{\otimes}
\newcommand{\vp}{\varphi}
\newcommand{\mc}[1]{\mathcal{#1}}
\begin{document}

\title[The refined Coates--Sinnott conjecture]{The Refined Coates--Sinnott Conjecture for Characteristic $p$ Global Fields}

\author{Joel Dodge}
\address{Binghamton University, Department of Mathematical Sciences, Binghamton, NY 13902-6000}
\curraddr{}
\email{joel@math.binghamton.edu}
\thanks{}

\author{Cristian D. Popescu*}
\address{University of California, San Diego, Department of Mathematics, La Jolla, CA 92093-0112.}
\curraddr{}
\email{cpopescu@math.ucsd.edu}
\thanks{${}^*$The second author's research was supported by NSF Grant DMS-0901447.}

\subjclass[2000]{11M38, 11G20, 11G25, 11G45, 14F30}

\keywords{$L$--functions, Quillen $K$--theory, \'etale cohomology, Picard $1$--motives}

\date{}

\dedicatory{}

\begin{abstract}
This article is concerned with proving a refined function field analogue of the Coates-Sinnott conjecture, formulated in the number field context in 1974.  Our main theorem calculates the Fitting ideal of a certain even Quillen $K$-group in terms of special values of $L$-functions.  The techniques employed are directly inspired by recent work of Greither and Popescu in the equivariant Iwasawa theory of arbitrary global fields.  They rest on the results of Greither--Popescu on the Galois module structure of certain naturally defined Picard $1$-motives associated to an arbitrary Galois extension of function fields.
\end{abstract}

\maketitle

\section{Introduction}

In \cite{GP1}, Greither and the second author engage in an in-depth study of the Galois module structure of Picard $1$-motives associated to characteristic $p$ global fields.  As an application of their results, they prove the $\ell$-adic \'{e}tale cohomological Coates-Sinnott conjecture in function fields, for all primes $\ell\neq p$.  In the main result of this article we strengthen their result in two distinct directions.

For primes $\ell\neq p$, inspired by techniques used in \cite{GP3} to prove a refined Coates-Sinnott conjecture in number fields, we strengthen the results in \cite{GP1} on the $\ell$-adic \'{e}tale cohomological Coates-Sinnott conjecture in function fields by proving an equality of ideals instead of just a containment.  For $\ell=p$, we prove that the relevant special $L$--value is a unit in the group ring with $\Z_p$-coefficients.  Finally, using the Quillen-Lichtenbaum conjecture (now a theorem), we transfer these results into statements about Quillen $K$-groups and combine them to give an especially strong form of the full integral Coates-Sinnott conjecture in the function field context.

We begin by giving the statement of the classical Coates--Sinnott conjecture for number fields.  If $K/k$ is an abelian extension of number fields with Galois group $G$ and $S$ is a finite set of primes of $k$ containing the infinite primes, then let
$$\Theta_{K/k, S}(s) = \sum_{\chi\in\widehat G} L_S(\chi,s) e_{\chi^{-1}}: \C\to\C[G]$$
be the $\C[G]$-valued, $S$-incomplete, $G$-equivariant $L$-function.  Here, $\widehat G$ denotes the group of complex valued characters of $G$, $e_\chi$ is the usual idempotent associated to $\chi$ in $\C[G]$ and $L_S(\chi, s)$ is the global $L$--function associated to $\chi$ with Euler factors at all primes in $S$ removed. The original Coates-Sinnott conjecture was stated in \cite{Coates-Sinnott} in terms of the Quillen $K$--groups $K_i(O_K)$ of the ring of algebraic integers $\mc{O}_K$.

\begin{conjecture}[Coates-Sinnott]\label{original coates-sinnott}
Let $K/k$ be an abelian extension of number fields with Galois group $G$, let $S$ be a finite set of primes of $k$ containing the infinite primes and the primes which ramify in $K/k$ and let $n\geq 2$ be an integer.  Then
$$\mathrm{Ann}_{\Z[G]}(K_{2n-1}(\mc{O}_{K})_{\mathrm{tors}}) \cdot \Theta_{K/k, S}(1-n) \subseteq \mathrm{Ann}_{\Z[G]}(K_{2n-2}(\mc{O}_K)).$$
\end{conjecture}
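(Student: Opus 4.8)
The plan is to localize at each rational prime $\ell$, translate the statement into $\ell$-adic \'etale cohomology via the Quillen--Lichtenbaum conjecture, and then deduce the resulting cohomological assertion from the equivariant Iwasawa Main Conjecture by a twisting-and-descent argument. First, since $n\geq 2$ forces $K_{2n-2}(\mc{O}_K)$ to be finite, and since $K_{2n-1}(\mc{O}_K)_{\mathrm{tors}}$ is a finite abelian group, while for any finite $\Z[G]$-module $M$ one has $\mathrm{Ann}_{\Z[G]}(M)=\bigcap_\ell\mathrm{Ann}_{\Z_\ell[G]}(M\otimes\Z_\ell)$, it suffices to fix $\ell$ and prove
\[
\mathrm{Ann}_{\Z_\ell[G]}\bigl(K_{2n-1}(\mc{O}_K)_{\mathrm{tors}}\otimes\Z_\ell\bigr)\cdot\Theta_{K/k,S}(1-n)\subseteq\mathrm{Ann}_{\Z_\ell[G]}\bigl(K_{2n-2}(\mc{O}_K)\otimes\Z_\ell\bigr),
\]
where $\Theta_{K/k,S}(1-n)$ is viewed inside $\Z_\ell[G]$ via its $\ell$-integrality (Deligne--Ribet, Cassou-Nogu\`es), using that $S$ contains the primes ramifying in $K/k$ so that no denominators survive from the Euler factors at those primes.

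Next I would invoke the Quillen--Lichtenbaum conjecture, now a theorem, to replace the $K$-groups by \'etale cohomology $G$-equivariantly: for $\ell$ odd there are natural isomorphisms $K_{2n-1}(\mc{O}_K)\otimes\Z_\ell\isom\Het^1(\mc{O}_K[1/\ell],\Z_\ell(n))$, $K_{2n-2}(\mc{O}_K)\otimes\Z_\ell\isom\Het^2(\mc{O}_K[1/\ell],\Z_\ell(n))$, and $K_{2n-1}(\mc{O}_K)_{\mathrm{tors}}\otimes\Z_\ell\isom\Het^0(\mc{O}_K[1/\ell],\Q_\ell/\Z_\ell(n))$; for $\ell=2$ the same holds once $\Het^2$ is replaced by its compactly supported variant (equivalently, one adjoins $\sqrt{-1}$ to $K$ and descends), so as to account for the real places. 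The problem thus becomes the purely cohomological containment
\[
\mathrm{Ann}_{\Z_\ell[G]}\bigl(\Het^0(\mc{O}_K[1/\ell],\Q_\ell/\Z_\ell(n))\bigr)\cdot\Theta_{K/k,S}(1-n)\subseteq\mathrm{Ann}_{\Z_\ell[G]}\bigl(\Het^2(\mc{O}_K[1/\ell],\Z_\ell(n))\bigr).
\]

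To prove this I would climb the cyclotomic tower $K_\infty=K(\mu_{\ell^\infty})$, with $\Gamma=\mathrm{Gal}(K_\infty/K)\cong\Z_\ell$, and study the Iwasawa module $X_\infty=\varprojlim_m\Het^2(\mc{O}_{K_m}[1/\ell],\Z_\ell(n))$, a finitely generated torsion module over $\Z_\ell[[\Gamma]][G]$, which is the $n$-twist of the standard $\ell$-ramified Iwasawa module attached to ideal class groups. The equivariant Main Conjecture identifies the zeroth Fitting ideal of $X_\infty$ (at worst its characteristic ideal) with the ideal generated by the $\ell$-adic $L$-function interpolating the $\Theta_{K/k,S}(1-n)$. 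Descending along $\Gamma$ by a snake-lemma computation of $\Gamma$-(co)homology then yields a Fitting-ideal statement at the finite level $K$, in which the $\Het^0$-term enters precisely as the source of the auxiliary annihilator $\mathrm{Ann}(K_{2n-1}(\mc{O}_K)_{\mathrm{tors}})$; this is the Coates--Sinnott descent mechanism, made cohomologically precise by Schneider and refined in \cite{GP3}. Since $\mathrm{Fitt}_{\Z_\ell[G]}(N)\subseteq\mathrm{Ann}_{\Z_\ell[G]}(N)$ for every module $N$, the desired containment drops out.

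The main obstacle is the equivariant Main Conjecture itself: it is available unconditionally only for $k=\Q$ (Mazur--Wiles and its equivariant refinements), and over a general totally real base only modulo the vanishing of the relevant Iwasawa $\mu$-invariant. Consequently this strategy establishes the conjecture unconditionally for abelian extensions of $\Q$ and, conditionally on $\mu=0$, for abelian extensions of totally real fields, but not in complete generality. A secondary difficulty is the descent and localization at $\ell=2$ and at primes $\ell\mid[K:k]$, where $\Z_\ell[G]$ is not regular, characteristic ideals alone do not control $\mathrm{Ann}$, and the real-place contribution to $\Het^2$ must be handled with care --- which is exactly why one wants a Fitting-ideal refinement rather than a bare characteristic-ideal statement, the point of \cite{GP3} and of the function-field analogue developed in this paper.
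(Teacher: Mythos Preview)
The statement you have chosen is Conjecture~\ref{original coates-sinnott}: in the paper it is \emph{not} proved, it is recorded as an open conjecture for number fields. The paper's contribution is the characteristic-$p$ analogue (Theorems~\ref{main theorem} and~\ref{refined CS}); for number fields it only quotes the conditional refinement of Greither--Popescu (Theorem~\ref{refined coates-sinnott number fields}), valid for $\ell>2$ under $\mu=0$. There is therefore no ``paper's own proof'' against which to compare your attempt.

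Your write-up is an honest outline of exactly that known partial strategy---localize at $\ell$, pass to \'etale cohomology via Quillen--Lichtenbaum, climb the cyclotomic $\Z_\ell$-tower, invoke an equivariant Main Conjecture, descend---and you yourself flag at the end that it succeeds unconditionally only over $\Q$, and over a general totally real base only modulo $\mu=0$. So what you have written is not a proof of the conjecture as stated, and you say so. That is the genuine gap, and it is not one you can close with the tools you list: the input you need (an unconditional equivariant Main Conjecture over an arbitrary totally real $k$, plus a clean treatment at $\ell=2$) is not available. Two smaller inaccuracies worth noting: $\Theta_{K/k,S}(1-n)$ need not itself lie in $\Z_\ell[G]$---it is only $\mathrm{Ann}_{\Z[G]}(K_{2n-1}(\mc{O}_K)_{\mathrm{tors}})\cdot\Theta_{K/k,S}(1-n)$ that Deligne--Ribet puts in $\Z[G]$, as the paper's remark after the conjecture states---so your ``viewed inside $\Z_\ell[G]$'' step is premature; and your $\ell=2$ fix (``replace $\Het^2$ by its compactly supported variant'') is not how the $2$-primary Quillen--Lichtenbaum discrepancy is actually handled.
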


\begin{remark}

\begin{enumerate}
\item[]
\item It is known from work of Deligne-Ribet \cite{Deligne-Ribet} that $$\mathrm{Ann}_{\Z[G]}(K_{2n-1}(\mc{O}_K)_{\mathrm{tors}}) \cdot \Theta_{K/k, S}(1-n) \subseteq \Z[G].$$ (See \cite{CS} for more details.)

\item We call the statement in Conjecture \ref{original coates-sinnott} {\em unrefined} because it only predicts a containment and not an equality of ideals.  In order to prove an equality, we will need a statement involving invariants which are more subtle than the annihilator, namely the first Fitting ideal ${\rm Fit}_{\Z[G]}(\ast)$.

\end{enumerate}
\end{remark}

Many of these results are most naturally stated and proven in terms of \'{e}tale cohomology rather than Quillen $K$--theory.  We state here the Quillen-Lichtenbaum conjecture linking \'etale cohomology and Quillen $K$--theory, recently proven as a consequence of work of Voevodsky \cite{Voevodsky} on the Bloch-Kato conjecture.   This will be repeatedly used to translate between results on \'{e}tale cohomology and $K$-theory.

\begin{theorem}[Quillen-Lichtenbaum Conjecture]\label{QL}
Let $K/k$ be an abelian extension of global fields with Galois group $G$ and let $S$ be a finite, nonempty set of primes of $k$, containing all the archimedean (infinite) primes.  Let $\ell$ be a prime number which is assumed to be odd if ${\rm char}(k)=0$ and different from $p$ if ${\rm char}(k)=p>0$. Then, for all $n\geq 2$ and $i=1, 2$, Soul\'e's $\ell$--adic Chern character maps \cite{Soule} give isomorphisms of $\Z_\ell[G]$-modules
$$K_{2n-i}(\mc{O}_{K, S})\tensor \Z_\ell \isom H^i_{\acute{e}t}(\mc{O}_{K, S}, \Z_\ell(n)),$$
where $\mc{O}_{K, S}$ is the usual ring of $S$--integers in $K$.
\end{theorem}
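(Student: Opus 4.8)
Since Theorem \ref{QL} is invoked here as a known result, the plan is merely to indicate the argument: one deduces it from the Bloch--Kato (norm residue) conjecture of Voevodsky and Rost \cite{Voevodsky} by means of the motivic-to-$K$-theory spectral sequence. Write $X:=\Spec\mc{O}_{K,S}$, a regular one-dimensional affine scheme on which $\ell$ is invertible. For each $\nu\geq 1$ there is the Atiyah--Hirzebruch-type spectral sequence of Bloch--Lichtenbaum, Friedlander--Suslin and Levine,
$$E_2^{p,q}=H^{p-q}_M\bigl(X,\Z/\ell^\nu(-q)\bigr)\ \Longrightarrow\ K_{-p-q}\bigl(X;\Z/\ell^\nu\bigr),$$
which is functorial in $X$, hence $G$-equivariant. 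The Bloch--Kato theorem is equivalent to the Beilinson--Lichtenbaum comparison statement, that the change-of-topology map $H^j_M(X,\Z/\ell^\nu(m))\to H^j_{\acute{e}t}(X,\mu_{\ell^\nu}^{\otimes m})$ is an isomorphism for $j\leq m$ and a monomorphism for $j=m+1$; this is the input I would take as a black box.

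The second ingredient is the dimension bound $\mathrm{cd}_\ell(X)\leq 2$: in characteristic $p$ this is Artin's bound for an affine curve over a finite field (using $\ell\neq p$), while in characteristic $0$ it is classical for $\ell$ odd, the oddness being precisely what annihilates the contributions of the real places --- which explains that hypothesis. Thus $H^j_{\acute{e}t}(X,\mu_{\ell^\nu}^{\otimes m})=0$ for $j\geq 3$. Now fix $n\geq 2$ and inspect the $E_2$-terms of total degrees $2n-1$ and $2n-2$: in degree $2n-1$ these are the groups $H^{2m-2n+1}_M(X,\Z/\ell^\nu(m))$ with $m\geq n$, and in degree $2n-2$ they are $H^{2m-2n+2}_M(X,\Z/\ell^\nu(m))$ with $m\geq n$ together with the single lower term $H^0_M(X,\Z/\ell^\nu(n-1))$. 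For every such pair $(j,m)$ with $j\geq 3$ one has $j\leq m+1$ (else $H^j_M(X,\Z/\ell^\nu(m))=0$ already for dimension reasons, $X$ being one-dimensional), so Beilinson--Lichtenbaum together with the vanishing of $H^{\geq 3}_{\acute{e}t}$ forces $H^j_M(X,\Z/\ell^\nu(m))=0$. Hence, apart from the low term $H^0_M(X,\Z/\ell^\nu(n-1))\isom H^0_{\acute{e}t}(X,\mu_{\ell^\nu}^{\otimes(n-1)})$, the surviving $E_2$-entries are those of weight $m=n$: namely $H^1_M(X,\Z/\ell^\nu(n))\isom H^1_{\acute{e}t}(X,\mu_{\ell^\nu}^{\otimes n})$ and $H^2_M(X,\Z/\ell^\nu(n))\isom H^2_{\acute{e}t}(X,\mu_{\ell^\nu}^{\otimes n})$, the latter isomorphism being Beilinson--Lichtenbaum in the range $i\leq m=n$ --- and it is exactly the case $i=2$ that needs $n\geq 2$. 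A short check shows these two entries carry no nonzero differentials into or out of them (their possible sources and targets lie among the groups just seen to vanish, or are separated from them in bidegree); hence in total degrees $2n-1$ and $2n-2$ the $E_\infty$-page consists of just $H^i_{\acute{e}t}(X,\mu_{\ell^\nu}^{\otimes n})$ for $i=1,2$ --- plus $H^0_{\acute{e}t}(X,\mu_{\ell^\nu}^{\otimes(n-1)})$ in degree $2n-2$ --- and the edge maps furnish $G$-equivariant isomorphisms $K_{2n-i}(X;\Z/\ell^\nu)\isom H^i_{\acute{e}t}(X,\mu_{\ell^\nu}^{\otimes n})$ for $i=1$ and, up to that extra summand, for $i=2$.

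Passing to $\ell$-adic coefficients is then routine. By Quillen's finite-generation theorem each $K_j(X)$ is finitely generated, so $\varprojlim_\nu K_j(X;\Z/\ell^\nu)\isom K_j(X)\tensor\Z_\ell$ (the relevant $\varprojlim^1$ and Tate-module terms vanishing), and dually $\varprojlim_\nu H^i_{\acute{e}t}(X,\mu_{\ell^\nu}^{\otimes n})=H^i_{\acute{e}t}(X,\Z_\ell(n))$, the $\varprojlim^1$ vanishing since these groups are finite. The nuisance summand disappears in the limit: $\varprojlim_\nu H^0_{\acute{e}t}(X,\mu_{\ell^\nu}^{\otimes(n-1)})=H^0_{\acute{e}t}(X,\Z_\ell(n-1))=0$ for $n\geq 2$, because the $(n-1)$-st power of the $\ell$-adic cyclotomic character is nontrivial on $G_{K,S}$. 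Taking inverse limits of the isomorphisms above therefore yields $G$-equivariant isomorphisms $K_{2n-i}(X)\tensor\Z_\ell\isom H^i_{\acute{e}t}(X,\Z_\ell(n))$ for $i=1,2$ and $n\geq 2$. It remains to recognize this map as Soul\'e's $\ell$-adic Chern character \cite{Soule}: the edge maps of the motivic spectral sequence agree, up to normalization, with the classical $\ell$-adic Chern class maps, and this comparison --- like everything else above --- is natural in $X$, hence $G$-equivariant.

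The genuinely deep input is the Bloch--Kato/Beilinson--Lichtenbaum theorem, which I would simply cite; granting it, everything else is the bookkeeping sketched above --- tracking surviving $E_2$-terms and vanishing differentials, the cohomological dimension bound, and the inverse-limit arguments --- together with one step that is not purely formal, namely the identification of the spectral-sequence edge map with Soul\'e's Chern class construction and the check that it respects the $G$-action. I expect the Bloch--Kato input to be the only serious obstacle; the Chern-class comparison, while not automatic, is a known and functorial matter.
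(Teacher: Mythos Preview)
The paper does not give a proof of Theorem \ref{QL}: it is stated as a known result, attributed to Voevodsky's work on the Bloch--Kato conjecture, and used thereafter purely as a black box. So there is no ``paper's own proof'' to compare against.

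That said, your sketch is the standard route from Bloch--Kato to Quillen--Lichtenbaum and is essentially correct. A few small points worth tightening if you ever write this out in full: (i) the vanishing $H^j_M(X,\Z/\ell^\nu(m))=0$ for $j>m+1$ ``for dimension reasons'' is immediate from the higher-Chow-group description when $X$ is a curve over a field, but for $X=\Spec\mc{O}_{K,S}$ in characteristic zero one is over a mixed-characteristic Dedekind base, so one should invoke Levine's extension of the motivic spectral sequence and the associated Gersten-type vanishing to that setting; (ii) at the finite level in degree $2n-2$ you have a two-step filtration rather than a direct sum, so when passing to the limit you are using that the systems of finite groups are Mittag--Leffler, hence $\varprojlim^1$ vanishes along the filtration as well --- this is true but deserves a word; (iii) the identification of the spectral-sequence edge maps with Soul\'e's Chern characters is, as you say, known but not formal. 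None of this is a genuine gap; the deep input is exactly where you place it, in Bloch--Kato/Beilinson--Lichtenbaum.
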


Away from the prime $\ell=2$ and under the assumption that a certain Iwasawa $\mu$--invariant vanishes, an especially strong form of Conjecture \ref{original coates-sinnott} was proved by Greither-Popescu in \cite{GP3}, as a consequence of their equivariant main conjecture in Iwasawa theory, also proved in \cite{GP3}.   Namely, they prove the following (see Theorem 6.11 in \cite{GP3}).

\begin{theorem}[Greither-Popescu]\label{refined coates-sinnott number fields}
Keep the notations and assumptions of Conjecture \ref{original coates-sinnott}.  Let $\ell>2$ and suppose that the Iwasawa $\mu$-invariant for the cyclotomic $\Z_\ell$--extension of $K$ is zero.  Then, we have an equality
\begin{eqnarray}
\nonumber \mathrm{Ann}_{\Z_\ell[G]}(H^1_{\acute{e}t}(\mc{O}_{K, S}[1/\ell], \Z_\ell(n))_{\mathrm{tors}}) \cdot \Theta_{K/k, S}(1-n) = \\
\nonumber = \epsilon \cdot \mathrm{Fit}_{\Z_\ell[G]}(H^2_{\acute{e}t}(\mc{O}_{K, S}[1/\ell], \Z_\ell(n))),
\end{eqnarray}
for a certain idempotent $\epsilon$ in $\Z_\ell[G]$.

\end{theorem}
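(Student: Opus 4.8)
The plan is to deduce this equality by descending, along the cyclotomic $\Z_\ell$-tower, the equivariant Iwasawa Main Conjecture proved in \cite{GP3}; the substantive input is thus the machinery of \cite{GP3}, which rests on the structure theory of $\ell$-adic realizations of Picard $1$-motives (developed in \cite{GP1} and adapted to number fields in \cite{GP3}), and what remains is to organize a descent argument and a Fitting-ideal manipulation. Fix the odd prime $\ell$ and write $\mc{O}'=\mc{O}_{K,S}[1/\ell]$. Let $K_\infty/K$ be the cyclotomic $\Z_\ell$-extension, $\Gamma=\mathrm{Gal}(K_\infty/K)\isom\Z_\ell$, $\Lambda=\Z_\ell[[\Gamma]]$, and $\mc{R}=\Z_\ell[[\mathrm{Gal}(K_\infty/k)]]$, which is free of rank $|G|$ over $\Lambda$. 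For the $n$-th Tate twist, set
$$\mf{H}^i(n)\;=\;\varprojlim_m H^i_{\acute{e}t}\big(\mc{O}_{K_m}[1/\ell],\Z_\ell(n)\big),$$
the limit over corestriction maps; the hypothesis $\mu=0$ forces $\mf{H}^2(n)$ to be finitely generated over $\Z_\ell$. With this in hand, the results of \cite{GP3} furnish a perfect complex $C^\bullet$ of $\mc{R}$-modules, with cohomology concentrated in degrees $1$ and $2$, such that $H^1(C^\bullet)\isom\mf{H}^1(n)$ and $H^2(C^\bullet)\isom\mf{H}^2(n)$ (an $n$-twisted Iwasawa-theoretic Tate sequence), and establish the Main Conjecture in the form $\mathrm{Fit}_{\mc{R}}(C^\bullet)=\Theta_\infty\cdot\mc{R}$, where $\Theta_\infty\in\mc{R}$ is the equivariant $p$-adic $L$-function interpolating the special values $\Theta_{K_m/k,S}(1-n)$. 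This identity is the engine of the proof.

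Next I would descend to the finite level. Applying $-\tensor^{\mathbf{L}}_{\Lambda}\Z_\ell$ to $C^\bullet$, and using that $\Gamma$ has cohomological dimension $1$ and that $\mc{R}$ is finite free over $\Lambda$, one obtains a perfect complex $\ol{C^\bullet}$ of $\Z_\ell[G]$-modules with cohomology in degrees $1$ and $2$. The base-change spectral sequence (where $\mathrm{Tor}^\Lambda_0=(-)_\Gamma$ and $\mathrm{Tor}^\Lambda_1=(-)^\Gamma$), combined with the standard co-descent results of Iwasawa theory (Tate, Schneider), identifies its cohomology, $H^i(\ol{C^\bullet})\isom H^i_{\acute{e}t}(\mc{O}',\Z_\ell(n))$ for $i=1,2$, the finite groups $H^0_{\acute{e}t}(\mc{O}',\Q_\ell/\Z_\ell(n))\isom H^1_{\acute{e}t}(\mc{O}',\Z_\ell(n))_{\mathrm{tors}}$ and the Euler factors at the primes above $\ell$ accounting for the error terms. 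Since the Fitting invariant of a perfect complex commutes with derived base change and $\Theta_\infty$ maps to $\Theta_{K/k,S}(1-n)$ under the augmentation $\mc{R}\ra\Z_\ell[G]$ (again up to those Euler factors at $\ell$), the Main Conjecture descends to
$$\mathrm{Fit}_{\Z_\ell[G]}(\ol{C^\bullet})\;=\;\Theta_{K/k,S}(1-n)\cdot\Z_\ell[G].$$

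It remains to read off $\mathrm{Fit}(H^2)$ from the Fitting invariant of $\ol{C^\bullet}$. Here I would work over $R:=\Z_\ell[G]$, a product of complete local rings, and invoke the algebraic lemmas of \cite{GP3}: for a perfect complex over $R$ with cohomology in degrees $1$ and $2$, finite top cohomology $B$, and $H^1$ with finite torsion submodule $T$, one has
$$\mathrm{Ann}_R(T)\cdot\mathrm{Fit}_R(\ol{C^\bullet})\;=\;\epsilon\cdot\mathrm{Fit}_R(B),$$
where $\epsilon$ is the idempotent of $R$ supported on the components on which $H^1(\ol{C^\bullet})$ has rank $0$, equivalently those on which $\Theta_{K/k,S}(1-n)$ is a nonzerodivisor; on the complementary components both sides vanish, $\Theta_{K/k,S}(1-n)$ having exactly its trivial zeros there. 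The point that turns a containment into an equality is that $T\isom H^0_{\acute{e}t}(\mc{O}',\Q_\ell/\Z_\ell(n))$ is a cyclic $\Z_\ell$-module, hence a cyclic $R$-module, so that $\mathrm{Fit}_R(T)=\mathrm{Ann}_R(T)$. Applying this with $B=H^2_{\acute{e}t}(\mc{O}',\Z_\ell(n))$ and $T=H^1_{\acute{e}t}(\mc{O}',\Z_\ell(n))_{\mathrm{tors}}$, and substituting the descended Main Conjecture for $\mathrm{Fit}_R(\ol{C^\bullet})$, yields exactly the asserted identity with its idempotent $\epsilon$.

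The main obstacle is the descent together with the bookkeeping it forces. Because Fitting ideals are not base-change functorial for general modules, the argument genuinely needs the perfect complex $C^\bullet$ — equivalently, it needs $\mf{H}^2(n)$ to have projective dimension $\le 1$ over $\mc{R}$ — and producing this complex is the technical heart of \cite{GP3}, where the detailed module theory of $\ell$-adic Picard $1$-motive realizations from \cite{GP1}, its number field analogue in \cite{GP3}, and the $\mu=0$ hypothesis are all indispensable. Second, pinning down the idempotent $\epsilon$, and checking that the claimed equality holds trivially on its complement, requires a careful analysis of the $\Gamma$-invariants and coinvariants of $\mf{H}^1(n)$ and $\mf{H}^2(n)$; it is for this analysis that the hypotheses $\ell>2$ (so the real places contribute nothing to the relevant étale cohomology) and $\mu=0$ are imposed. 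Finally, since the statement inverts $\ell$ in the cohomology yet keeps $\Theta_{K/k,S}$ rather than $\Theta_{K/k,S\cup\{v\mid\ell\}}$, one must carry the Euler factors at the primes above $\ell$ through all the descent exact sequences and verify that they cancel against $\mathrm{Ann}_R(T)$ — short, but error-prone.
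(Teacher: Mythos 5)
The paper does not prove this theorem: it is quoted from \cite{GP3} (their Theorem 6.11) as motivation and background, so there is no in-paper proof to compare your attempt against. The closest thing the paper contains is the function field analogue (Theorem~\ref{main theorem} and its proof in Section~5), which the authors say is explicitly modeled on the argument of \cite{GP3}; that is the natural reference point.

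Against that blueprint, your sketch identifies the right ingredients --- the equivariant Iwasawa Main Conjecture as the engine, descent along the cyclotomic tower, the $\mu=0$ hypothesis to ensure $\Z_\ell$-finite generation, a projective-dimension-$\le 1$ constraint so that Fitting ideals behave under descent, cyclicity of the torsion module giving $\mathrm{Fit}=\mathrm{Ann}$, and the idempotent $\epsilon$ absorbing the trivial zeros --- but it packages them in a genuinely different formalism. Greither--Popescu (and this paper, in the function field case) do not pass through perfect complexes and derived base change. They write down an explicit four-term exact sequence of Iwasawa modules coming from the $\ell$-adic realization of a (Picard or abstract) $1$-motive, verify by hand that the two middle terms have projective dimension $\le 1$, take $\Gamma$-coinvariants via Lemma~\ref{snake}, dualize, and then apply the Burns--Greither Fitting ideal lemma (Proposition~\ref{four term lemma}). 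What you call ``Fitting invariants of perfect complexes commute with derived base change'' is, in their hands, exactly the conjunction of the $\mathrm{pd}\le 1$ verification and that four-term lemma; and your closing algebraic lemma relating $\mathrm{Ann}_R(T)\cdot\mathrm{Fit}_R(\ol{C^\bullet})$ to $\epsilon\cdot\mathrm{Fit}_R(B)$ is an abstraction of a chain of explicit Fitting-ideal computations they carry out, not a lemma they state. Your route is conceptually cleaner but imports derived-category machinery whose precise compatibility statements would themselves need checking; their route is more elementary and is what is actually written in \cite{GP3}. Both, appropriately, defer the genuinely hard input (the Main Conjecture itself, Iwasawa co-descent, and the Euler-factor bookkeeping at $\ell$) to earlier results, as does the present paper when it simply cites the theorem.
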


\begin{remark}

 \begin{enumerate}

\item[]

\item If $R$ is a commutative ring and $M$ is a finitely generated $R$-module, then we write $\mathrm{Fit}_R(M)$ for the first Fitting ideal of $M$.

\item For the definition of $\epsilon$, see the remarks following Corollary 6.10 in \cite{GP3}.

\item One can use Theorem \ref{QL} and the fact that the Fitting ideal ${\rm Fit}_{\Z_\ell[G]}(M)$ of a finitely generated $\Z_\ell[G]$--module $M$ is included in its annihilator  ${\rm Ann}_{\Z_\ell[G]}(M)$
to show that Theorem \ref{refined coates-sinnott number fields} is a refinement of the $\ell$--primary piece of Conjecture \ref{original coates-sinnott}. (See \cite{GP3}, \S6 for the relevant details.)

\item The theorem above is understood as a {\em refined} result because it expresses the left hand side as a precise piece of the Fitting ideal of an algebraic invariant associated to the field $K$.
\end{enumerate}
\end{remark}

In the case of characteristic $p$ global fields, an $\ell$-adic \'{e}tale cohomological version of Conjecture \ref{original coates-sinnott} at primes $\ell\ne p$ has recently been proven by Greither-Popescu in \cite{GP1}.  In fact, they prove a stronger, though still unrefined, statement.  We briefly describe their result below.

Let $\mc{K}_0/\mc{K}_0'$ be an abelian extension of characteristic $p$ global fields with Galois group $G$ and let $S_0$ be a finite nonempty set of primes of $\mc{K}_0'$ containing all those primes which ramify in $\mc{K}_0/\mc{K}_0'$.  In what follows, $\mathcal O_{\mc{K}_0, S_0}$ denotes the ring of $S_0$-integers in $\mc{K}_0$. The $S_0$-incomplete $G$-equivariant $L$-function for this extension will be denoted by
$$\Theta_{\mc{K}_0/\mc{K}_0', S_0}:\C \ra \C[G].$$
The precise definition of $\Theta_{\mc{K}_0/\mc{K}_0', S_0}$ will be given in \S\ref{L-functions} below.
Let $q$ be the order of the exact field of constants of $\mc{K}_0'$, i.e. the finite field consisting of those elements of $\mc{K}_0'$ which are algebraic over $\F_p$.  Theorem 5.20 in \cite{GP1} is the following.

\begin{theorem}[Greither-Popescu]\label{unrefined CS in function fields}
Let $\ell$ be a prime different from $p$.  Then
$$\mathrm{Ann}_{\Z_\ell[G]}(H^1_{\acute{e}t}(\mc{O}_{\mc{K}_0, S_0}, \Z_\ell(n))) \cdot \Theta_{\mc{K}_0/\mc{K}_0', S_0}(q^{n-1}) \subseteq \mathrm{Fit}_{\Z_\ell[G]}(H^2_{\acute{e}t}(\mc{O}_{\mc{K}_0, S_0}, \Z_\ell(n))).$$
\end{theorem}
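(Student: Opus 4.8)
The plan is to follow the strategy of \cite{GP1}: reinterpret both \'etale cohomology groups through the $\ell$-adic realization of a Picard $1$-motive over the constant-field extension, bring in the Greither--Popescu structure theorem for such realizations, and descend.

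First I would reinterpret the cohomology geometrically. Write $U_0 = \Spec \mc{O}_{\mc{K}_0, S_0}$, an affine curve over the exact field of constants $\F_{q'}$ of $\mc{K}_0$; set $\ol U = U_0\times_{\F_{q'}}\ol{\F}_p$, let $\Gamma = \mathrm{Gal}(\ol{\F}_p/\F_{q'})$, topologically procyclic with generator the geometric Frobenius, and let $\mc{G} = \mathrm{Gal}(\mc{K}_0\ol{\F}_p/\mc{K}_0')$, an abelian profinite group fitting in $1\to\Gamma\to\mc{G}\to G\to 1$. Since $\ol U$ is an affine curve over an algebraically closed field, $H^i_{\acute{e}t}(\ol U,\Z_\ell(n)) = 0$ for $i\geq 2$; since $\mathrm{cd}_\ell(\Gamma) = 1$ and $H^0(\Gamma,\Z_\ell(n)) = 0$ for $n\neq 0$, the Hochschild--Serre spectral sequence for $\ol U\to U_0$ collapses to a canonical isomorphism $H^2_{\acute{e}t}(\mc{O}_{\mc{K}_0, S_0},\Z_\ell(n))\isom H^1_{\acute{e}t}(\ol U,\Z_\ell(n))_\Gamma$ together with a short exact sequence
\[
0 \to \Z_\ell(n)_\Gamma \to H^1_{\acute{e}t}(\mc{O}_{\mc{K}_0, S_0},\Z_\ell(n)) \to H^1_{\acute{e}t}(\ol U,\Z_\ell(n))^\Gamma \to 0 .
\]
I would then identify $H^1_{\acute{e}t}(\ol U,\Z_\ell(n))$ with the $(n-1)$-st Tate twist of the $\ell$-adic realization of the Picard $1$-motive attached to $\ol U$ with modulus supported on $S_0$; to work with a module having good homological and arithmetic properties I would compare this with the corresponding twisted realization $V_T$ of a $T$-modified $1$-motive, for a suitable auxiliary finite set $T$ of primes of $\mc{K}_0'$ disjoint from $S_0$, via an exact sequence whose error term is killed by $\delta_T := \prod_{v\in T}(1 - \mathrm{Fr}_v^{-1}\mathrm{N}v^{\,n})\in\Z_\ell[G]$.

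Next I would invoke the main result of \cite{GP1} on the Galois-module structure of Picard $1$-motives: over the Iwasawa algebra $\Lambda = \Z_\ell[[\mc{G}]]$, the module $V_T$ is $\Z_\ell$-free --- hence of projective dimension $\leq 1$ --- and $\Lambda$-torsion, and $\mathrm{Fit}_\Lambda(V_T)$ is the principal ideal generated by the equivariant $\ell$-adic $L$-function interpolating the $S_0$- and $T$-modified special values; under the quotient $\Lambda\twoheadrightarrow\Z_\ell[G]$ killing $\Gamma$, with the interpolation variable specialized as dictated by the $n$-th Tate twist, this generator maps to $\delta_T\cdot\Theta_{\mc{K}_0/\mc{K}_0', S_0}(q^{n-1})$. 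Since Fitting ideals commute with base change and $(V_T)_\Gamma = V_T\tensor_\Lambda\Z_\ell[G]$, it follows that $\mathrm{Fit}_{\Z_\ell[G]}\big((V_T)_\Gamma\big) = \Z_\ell[G]\cdot\delta_T\cdot\Theta_{\mc{K}_0/\mc{K}_0', S_0}(q^{n-1})$. Combining this with the $T$-comparison exact sequence, the isomorphism $H^2_{\acute{e}t}(\mc{O}_{\mc{K}_0, S_0},\Z_\ell(n))\isom H^1_{\acute{e}t}(\ol U,\Z_\ell(n))_\Gamma$, and the fact that $\delta_T$ and the constant-field term $\Z_\ell(n)_\Gamma$ both lie in $\mathrm{Ann}_{\Z_\ell[G]}(H^1_{\acute{e}t}(\mc{O}_{\mc{K}_0, S_0},\Z_\ell(n)))$ --- a function-field incarnation of the Deligne--Ribet integrality --- yields the asserted inclusion.

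I expect the descent from $\Lambda$ to $\Z_\ell[G]$ to be the main obstacle: one must trade the $T$-Euler factors $\delta_T$ and the constant-field contribution $\Z_\ell(n)_\Gamma$ against $\mathrm{Ann}_{\Z_\ell[G]}(H^1_{\acute{e}t}(\mc{O}_{\mc{K}_0, S_0},\Z_\ell(n)))$, verify that passage to $\Gamma$-coinvariants (which is only right exact) preserves the inclusion along the $T$-comparison sequence, and control the $\Gamma$-invariants $H^1_{\acute{e}t}(\ol U,\Z_\ell(n))^\Gamma$ appearing in the extension that computes $H^1_{\acute{e}t}(\mc{O}_{\mc{K}_0, S_0},\Z_\ell(n))$. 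The slack in this trade-off is exactly what forces a containment rather than an equality; removing it --- by carrying Fitting ideals rather than annihilators throughout and reanalyzing the descent, in the spirit of the number-field argument of \cite{GP3} --- is the refinement carried out in this paper.
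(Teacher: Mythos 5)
The paper does not actually prove this theorem: it is quoted verbatim from Theorem~5.20 of \cite{GP1}, and the authors even emphasize (in the remarks after Theorem \ref{main theorem}) that their own argument does not pass through it. The only internal route to the containment here is as a corollary of Theorem \ref{main theorem}(1): since $H^1_{\acute{e}t}(\mc{O}_{\mc{K}_0,S_0},\Z_\ell(n))\simeq(\Q_\ell/\Z_\ell)(n)^\Gamma$ is finite cyclic (Proposition \ref{EtaleandJacobian}(2)), Lemma \ref{fitting and annhilators} gives $\mathrm{Ann}_{\Z_\ell[G]}(H^1)=\mathrm{Fit}_{\Z_\ell[G]}(H^1)$, and then the equality of Theorem \ref{main theorem}(1) is strictly stronger than the stated inclusion.

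As a reconstruction of the original GP1 argument your sketch is plausible, but a few points deserve flagging. Your Hochschild--Serre short exact sequence for $H^1$ in fact collapses to an isomorphism $H^1_{\acute{e}t}(\mc{O}_{\mc{K}_0,S_0},\Z_\ell(n))\cong\Z_\ell(n)_\Gamma$ because $H^1_{\acute{e}t}(\ol{U},\Z_\ell(n))^\Gamma=0$ for $n\geq 2$; this is not automatic but needs Weil's Riemann hypothesis for the curve $Z$ to rule out $1$ as a Frobenius eigenvalue after twisting, and is precisely the eigenvalue computation carried out in \S 5 (without it, Proposition \ref{EtaleandJacobian}(2) would be false). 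The assertion that ``$V_T$ is $\Z_\ell$-free, hence of projective dimension $\leq 1$'' is a non sequitur over $\Lambda[H']$; the actual input from \cite{GP1} is $\Z_\ell[H]$-projectivity (Theorem \ref{GP theorem}(1)), though for the mere containment you do not need any projective-dimension control, since Proposition \ref{properties of fitting ideals}(4) (base change of Fitting ideals) holds for any finitely generated module. The Euler-factor annihilation of $H^1$ by $\delta_T$ is correct but should be stated as a fact to verify, not taken for granted. Finally, the ``slack'' you correctly diagnose in trading $\delta_T$ and the constant-field contribution against the annihilator is exactly what the present paper eliminates, but by a genuinely different device than a tighter version of your descent: in \S 5 the $\Gamma$-coinvariants of the twisted realization of $\mc{M}_{T,S}$ are organized into the four-term exact sequence \eqref{key sequence}, the four-term Fitting lemma (Proposition \ref{four term lemma}) is applied together with the $\mc{M}_{S,T}\leftrightarrow\mc{M}_{T,S}$ duality (Proposition \ref{pairing}), and $\delta_{T_0}(q^{n-1})$ then appears as a common non-zero-divisor factor on both sides and cancels. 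That cancellation, not a sharper estimate of the annihilator, is what turns the containment into an equality.
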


\begin{remark}
\begin{enumerate}
\item[]

\item The values $\Theta_{\mc{K}_0/\mc{K}_0', S_0}(q^{n-1})$ are the correct function field analogues of the special values at $s=1-n$ in the number field case.

\item We do not have to take the torsion subgroup on the left hand side because these \'{e}tale cohomology groups are finite in function fields (as a direct consequence of the characteristic $p$ Riemann hypothesis, see \cite{GP1} for more details.)
\end{enumerate}
\end{remark}

The fact that an equality was obtained in Theorem \ref{refined coates-sinnott number fields} makes it reasonable to expect that the result of Theorem \ref{unrefined CS in function fields} can be strengthened to give an equality in the function field case as well.  As we prove in this paper, this is indeed the case at primes $\ell\neq p$. Moreover, the passage to Quillen $K$--theory permits us to prove the analogous equality at the prime $\ell=p$ as well.
First, we obtain the following result in terms of \'etale cohomology. (See \S5 below for the proof.)

\begin{theorem} \label{main theorem}
Let $n\geq 2$ be an integer.  Then, the following hold.
\begin{enumerate}
\item $\mathrm{Fit}_{\Z_\ell[G]}\, H^1_{\acute{e}t}(\mc{O}_{\mc{K}_0, S_0}, \Z_\ell(n)) \cdot \Theta_{\mc{K}_0/\mc{K}_0', S_0}(q^{n-1})
= \mathrm{Fit}_{\Z_\ell[G]}\, H^2_{\acute{e}t}(\mc{O}_{\mc{K}_0, S_0}, \Z_\ell(n)).$
\item $\Theta_{\mc{K}_0/\mc{K}_0', S_0}(q^{n-1}) \in \Z_p[G]^\times.$
\end{enumerate}
\end{theorem}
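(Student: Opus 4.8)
First I would set up the key player: the Picard $1$-motive $\mathcal{M}$ associated (as in \cite{GP1}) to the extension $\mathcal{K}_0/\mathcal{K}_0'$ and the set $S_0$, together with its $\ell$-adic Tate module $T_\ell(\mathcal{M})$, which carries a natural $\Z_\ell[G]$-action and an action of Frobenius. The engine of the argument is the theorem of Greither--Popescu on the Galois module structure of $T_\ell(\mathcal{M})$: it is a module of projective dimension $\leq 1$ over the relevant Iwasawa (or group) algebra, with a Fitting ideal computed exactly in terms of the equivariant $L$-function. I would then recall the two standard exact sequences relating the twisted Tate modules $T_\ell(\mathcal{M})(n-1)$ to the étale cohomology groups $H^1_{\text{\'et}}(\mathcal{O}_{\mathcal{K}_0,S_0},\Z_\ell(n))$ and $H^2_{\text{\'et}}(\mathcal{O}_{\mathcal{K}_0,S_0},\Z_\ell(n))$ — these come from the Tate--Poitou/Artin--Verdier framework and the localization sequence in étale cohomology, and were already used in \cite{GP1} to prove the containment in Theorem \ref{unrefined CS in function fields}. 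The point is that, over $\Z_\ell[G]$, a short exact sequence $0 \to A \to B \to C \to 0$ with $B$ of projective dimension $\leq 1$ and finite $A, C$ forces $\mathrm{Fit}_{\Z_\ell[G]}(B) = \mathrm{Fit}_{\Z_\ell[G]}(A)\cdot\mathrm{Fit}_{\Z_\ell[G]}(C)$ (multiplicativity of Fitting ideals in short exact sequences, which holds unconditionally for the outer terms but requires the projective-dimension hypothesis in this form). Chasing the Fitting ideals through these sequences, and substituting the Greither--Popescu formula for $\mathrm{Fit}(T_\ell(\mathcal{M})(n-1))$ — which, after evaluating the characteristic-element/Stickelberger element at the arithmetic Frobenius, produces exactly $\Theta_{\mathcal{K}_0/\mathcal{K}_0', S_0}(q^{n-1})$ times the Fitting ideal of $H^1$ — should yield the stated equality. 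The refinement over \cite{GP1} is precisely that \cite{GP1} only extracted a divisibility from this computation, whereas here I keep track of the full Fitting ideal on both sides.

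The main obstacle in part (1) is the projective-dimension (equivalently, cohomological triviality) input needed to upgrade containment to equality: one needs $H^1_{\text{\'et}}(\mathcal{O}_{\mathcal{K}_0,S_0},\Z_\ell(n))$ — or rather the relevant twist of $T_\ell(\mathcal{M})$ — to have projective dimension $\leq 1$ over $\Z_\ell[G]$, and one must verify that the two exact sequences genuinely have the shape (finite outer terms, projective-dimension-$\leq 1$ middle term) needed for the multiplicativity of Fitting ideals to apply without leftover ambiguity. This is where the characteristic-$p$ Riemann hypothesis (finiteness of all the étale cohomology groups involved) and the structural results of \cite{GP1} on $T_\ell(\mathcal{M})$ are essential; I expect the function field case to be cleaner than the number field case of \cite{GP3} precisely because there is no $\mu$-invariant hypothesis and no prime $\ell = 2$ pathology to exclude, so the idempotent $\epsilon$ of Theorem \ref{refined coates-sinnott number fields} disappears.

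**The plan for part (2) — the $\ell = p$ case.**

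Here the étale-cohomological machinery above is unavailable, but the statement is also much softer: I only need to show $\Theta_{\mathcal{K}_0/\mathcal{K}_0', S_0}(q^{n-1})$ is a unit in $\Z_p[G]$. I would argue componentwise after extending scalars: $\Z_p[G]$ embeds into a product of discrete valuation rings (the integral closures in the fields $\Q_p(\chi)$ generated by the values of the $\overline{\Q}_p$-valued characters $\chi$ of $G$), and an element of $\Z_p[G]$ is a unit iff each of its $\chi$-components is a unit in the corresponding DVR, i.e. has valuation zero. So it suffices to show that for every $\overline{\Q}_p$-valued character $\chi$ of $G$, the value $L_{S_0}(\chi, q^{n-1})$ (the $\chi$-component of $\Theta$) is a $p$-adic unit. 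Now each such $L_{S_0}(\chi,s)$ is, up to removal of finitely many Euler factors at primes in $S_0$, the $L$-function of a Galois character over a function field, which by Weil is a polynomial in $q^{-s}$ with all reciprocal roots of absolute value $q^{1/2}$ (or $1$, for the trivial-type factors) — in particular its value at $s = n-1 \geq 1$ is an algebraic number all of whose archimedean absolute values are controlled, and more to the point, which is an algebraic $\ell$-adic unit for $\ell \neq p$, hence (since the $L$-value is a single algebraic number and the product of its absolute values over all places is $1$) it can only fail to be a $p$-adic unit if... — here one uses that the characteristic-$p$ $L$-value at a positive integer $n-1$ is, by the known description of the zeta function of a curve over $\F_q$ and its twists, a unit at $p$: concretely, the Euler factors at $p$-adic places and the polynomial's value specialize to units because the relevant cohomology (crystalline/de Rham side, or simply the numerator polynomial having constant term and leading behavior coprime to $p$) is torsion-free and the Frobenius eigenvalues are $\ell$-units. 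I would phrase this cleanly by invoking the function field analogue of the Deligne--Ribet integrality together with the observation that the $p$-part of the relevant Fitting ideal computation (or directly the formula $\Theta(q^{n-1}) \equiv (\text{unit}) \bmod p$ coming from $q \equiv 0$ considerations on the Euler factors) forces unit behavior.

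The main obstacle in part (2) is pinning down exactly why the $L$-value is a $p$-adic \emph{unit} and not merely a $p$-adic integer — i.e., ruling out $p$-divisibility. The cleanest route, which I would pursue, is to note that $\Theta_{\mathcal{K}_0/\mathcal{K}_0', S_0}(q^{n-1})$ is, via the Weil-étale or crystalline interpretation of special values of function field zeta functions, congruent modulo $p$ to an explicit Euler-product expression in which every factor is visibly a unit modulo $p$ (because $q = p^f$ kills the "interesting" part of each Euler factor, leaving the $S_0$-incomplete product of terms $\equiv 1 \bmod p$), so that $\Theta(q^{n-1}) \in \Z_p[G]^\times$; alternatively one deduces it from part (1) by taking $\ell$ running and a global argument, but the direct congruence argument is self-contained. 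I would present the direct argument as a short lemma on Euler factors modulo $p$, and I expect that to be the one genuinely new computation needed beyond what is already in \cite{GP1, GP3}.
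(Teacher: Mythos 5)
Your plan for part (1) has the right ingredients in broad outline (Picard $1$-motives, the Greither--Popescu structure theorem, projective dimension arguments, Fitting ideals) but the mechanism you describe does not match what is actually needed, and the missing pieces are not cosmetic. You speak of chasing Fitting ideals through \emph{short} exact sequences and invoking ``multiplicativity''; the paper's engine is the \emph{four-term} Burns--Greither lemma (Proposition \ref{four term lemma}): for $0\to A\to B\to C\to D\to 0$ finite with $\mathrm{pd}_{\Z_\ell[G]}(B),\mathrm{pd}_{\Z_\ell[G]}(C)\leq 1$, one gets $\mathrm{Fit}(A^\wedge)\cdot\mathrm{Fit}(C)=\mathrm{Fit}(B)\cdot\mathrm{Fit}(D)$ --- note the Pontryagin dual $A^\wedge$. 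A naive multiplicativity statement for short exact sequences is false in general, and the four-term shape is forced by the structure of the sequence \eqref{motive jacobian ses}. Two further essential moves are absent from your sketch: (a) one must introduce an auxiliary nonempty set $T_0$ disjoint from $S_0$ and work with the $(S_0,T_0)$-modified $L$-function $\Theta_{S_0,T_0}(u)\in\Z[G][u]$ (otherwise $\Theta_{S_0}$ is not integral and the Fitting ideal of the Tate module of the 1-motive cannot be read off); the $\delta_{T_0}(q^{n-1})$ factor so introduced shows up on both sides of \eqref{fundamentalEquality} and is cancelled at the very end, using that it is a non-zero-divisor; and (b) one must exploit the duality pairing between $\mathcal M_{S,T}$ and $\mathcal M_{T,S}$ (Proposition \ref{pairing}), since the four-term sequence one coinvariants and dualizes is built from $T_\ell(\mathcal M_{T,S})(-n)$ while the Greither--Popescu Fitting ideal formula (Theorem \ref{GP theorem}) is stated for $T_\ell(\mathcal M_{S,T})$.

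For part (2), your closing intuition --- that each Euler factor becomes $\equiv 1\pmod p$ because $q^{n-1}$ is a power of $p$, and units mod $p$ lift --- is the right congruence, but your route to it does not work as stated: the infinite Euler product $\Theta_{S_0}(u)$ does not converge $p$-adically, so you cannot ``reduce it mod $p$'' term by term, and the detour through $\chi$-components, Weil bounds and product formulas does not close the gap (it proves integrality, not unit-ness). What the paper does instead is again use the auxiliary $T_0$: one checks $\delta_{T_0}(q^{n-1})\in 1+p\Z_p[G]$ directly, so it suffices to treat $\Theta_{S_0,T_0}(q^{n-1})$; one then invokes the Greither--Popescu determinant description $\Theta_{S_0,T_0}(u)=\det_{\Z_\ell[G]}(1-\gamma_q u\mid T_\ell(\overline{\mathcal M}))$ at some prime $\ell\neq p$, together with the $\Z_\ell[G]$-projectivity of $T_\ell(\overline{\mathcal M})$, to get $\Theta_{S_0,T_0}(u)\in 1+u\Z_\ell[G][u]$; intersecting with $\Z[G][u]$ gives $\Theta_{S_0,T_0}(u)\in 1+u\Z[G][u]$, and evaluating at $u=q^{n-1}$ puts the value in $1+p\Z_p[G]\subseteq\Z_p[G]^\times$. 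The determinant formula and the passage through a prime $\ell\neq p$ is exactly the cleanly-stated ``lemma'' you anticipate needing but do not supply.
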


\begin{remark}
\begin{enumerate}
\item[]
\item The previous hypothesis on the $\mu$-invariant is not present in the Theorem above because the characteristic $p$ analogue of the $\mu$-invariant is always zero. (That is, for all prime $\ell$, the $\ell$--adic Tate module of the Jacobian of a smooth projective model of $\mathcal K_0$ is a free $\Z_\ell$--module.)

\item No idempotent appears here because, unlike in the case of number fields, the special values $\Theta_{\mc{K}_0/\mc{K}_0', S_0}(q^{n-1})$ are supported at all idempotents of $\Q_\ell[G]$.
This is a consequence of the fact that characteristic $p$ global $L$--functions do not vanish at negative integers.
\item Although the containment of Theorem \ref{unrefined CS in function fields} can be viewed as giving one half of Theorem \ref{main theorem}, our proof of Theorem \ref{main theorem} does not make use of that result. However, we make
use in an essential way of the main result of \cite{GP1}, which can be viewed as an abelian equivariant main conjecture in the Iwasawa theory of characteristic $p$ global fields.
\end{enumerate}
\end{remark}

From Theorem \ref{main theorem}, we can quickly deduce our main theorem.

\begin{theorem}\label{refined CS}
Let $n\geq 2$ be an integer.  Then
$$\mathrm{Fit}_{\Z[G]}(K_{2n-1}(\mc{O}_{\mc{K}_0, S_0})) \cdot \Theta_{\mc{K}_0/\mc{K}_0', S_0}(q^{n-1}) = \mathrm{Fit}_{\Z[G]}(K_{2n-2}(\mc{O}_{\mc{K}_0, S_0})).$$
\end{theorem}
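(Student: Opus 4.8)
The plan is to deduce Theorem~\ref{refined CS} from Theorem~\ref{main theorem} and the Quillen--Lichtenbaum theorem~\ref{QL} by a formal, prime-by-prime comparison; all of the arithmetic has already been carried out in Theorem~\ref{main theorem}. First I would record that the $\Z[G]$-modules $K_{2n-1}(\mc{O}_{\mc{K}_0, S_0})$ and $K_{2n-2}(\mc{O}_{\mc{K}_0, S_0})$ are \emph{finite}: they are finitely generated by Quillen's theorem, and for every prime $\ell\neq p$ their $\ell$-adic completions are the finite groups $H^i_{\acute{e}t}(\mc{O}_{\mc{K}_0, S_0},\Z_\ell(n))$ ($i=1,2$), by Theorem~\ref{QL} and the characteristic $p$ Riemann hypothesis (cf.\ the remark after Theorem~\ref{unrefined CS in function fields}), so these $K$-groups have rank $0$. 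Since $\Theta_{\mc{K}_0/\mc{K}_0', S_0}(q^{n-1})$ lies in $\Q[G]$ (rationality of function-field $L$-values at negative integers) and is a unit there by Theorem~\ref{main theorem}(2), both sides of the asserted equality are ideals of $\Z[G]$ of full $\Z$-rank. A full-rank ideal of the Noetherian reduced ring $\Z[G]$ is the intersection of its localizations, hence is determined by the ideals it generates in $\Z_\ell[G]$ for all primes $\ell$; and the formation of Fitting ideals is compatible with the flat base changes $\Z[G]\to\Z_\ell[G]$. So it suffices to prove the identity after $-\otimes_{\Z}\Z_\ell$, one prime $\ell$ at a time.

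For $\ell\neq p$ this is immediate from Theorem~\ref{QL} with $i=1$ and $i=2$, bearing in mind that in characteristic $p$ no localization is needed: every maximal ideal of $\mc{O}_{\mc{K}_0, S_0}$ has residue characteristic $p\neq\ell$, so $\ell$ is already invertible and $\mc{O}_{\mc{K}_0, S_0}[1/\ell]=\mc{O}_{\mc{K}_0, S_0}$. We obtain isomorphisms of $\Z_\ell[G]$-modules
$$K_{2n-1}(\mc{O}_{\mc{K}_0, S_0})\otimes\Z_\ell\isom H^1_{\acute{e}t}(\mc{O}_{\mc{K}_0, S_0},\Z_\ell(n)),\qquad K_{2n-2}(\mc{O}_{\mc{K}_0, S_0})\otimes\Z_\ell\isom H^2_{\acute{e}t}(\mc{O}_{\mc{K}_0, S_0},\Z_\ell(n)),$$
and since isomorphic modules have equal Fitting ideals, the $\Z_\ell[G]$-scalar extension of Theorem~\ref{main theorem}(1) is exactly the $\ell$-part of the desired identity.

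For $\ell=p$ the Quillen--Lichtenbaum comparison is unavailable, and this is where Theorem~\ref{main theorem}(2) does its work, together with the \emph{vanishing} $K_{2n-1}(\mc{O}_{\mc{K}_0, S_0})\otimes\Z_p=K_{2n-2}(\mc{O}_{\mc{K}_0, S_0})\otimes\Z_p=0$ for $n\geq 2$. I would obtain this vanishing from the motivic (Atiyah--Hirzebruch) spectral sequence for $\mc{O}_{\mc{K}_0, S_0}$, which is a localization of the coordinate ring of a smooth affine curve over a finite field: by Geisser--Levine the mod-$p^\nu$ motivic cohomology of such a ring in weight $b$ is computed by the cohomology of the logarithmic de Rham--Witt sheaf $W_\nu\Omega^{b}_{\log}$, which vanishes for $b\geq 2$ since a curve carries no nonzero $b$-forms; tracking the surviving terms gives $K_m(\mc{O}_{\mc{K}_0, S_0};\Z/p^\nu)=0$ for $m\geq 2$, so these $K$-groups are $p$-divisible and $p$-torsion-free, hence (being finitely generated) have trivial $p$-adic completion. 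Then $\mathrm{Fit}_{\Z_p[G]}$ of each of them is the unit ideal, and Theorem~\ref{main theorem}(2) gives $\Z_p[G]\cdot\Theta_{\mc{K}_0/\mc{K}_0', S_0}(q^{n-1})=\Z_p[G]$, which is the $p$-part of the identity; combining the two cases yields Theorem~\ref{refined CS}. This last case is the only real obstacle in the deduction: because $\Theta_{\mc{K}_0/\mc{K}_0', S_0}(q^{n-1})$ is a unit in $\Z_p[G]$, multiplication by it cannot change a Fitting ideal, so the $p$-adic statement forces $\mathrm{Fit}_{\Z_p[G]}(K_{2n-1}(\mc{O}_{\mc{K}_0, S_0})\otimes\Z_p)=\mathrm{Fit}_{\Z_p[G]}(K_{2n-2}(\mc{O}_{\mc{K}_0, S_0})\otimes\Z_p)$, an equality one can only see because both modules are $p$-adically trivial --- which is precisely why passing to Quillen $K$-theory, rather than working with the ill-behaved naive $p$-adic \'etale cohomology of a characteristic $p$ scheme, is the right move at $\ell=p$.
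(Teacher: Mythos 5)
Your proposal is correct and follows essentially the same route as the paper: reduce to a prime-by-prime comparison, invoke Quillen--Lichtenbaum together with Theorem~\ref{main theorem}(1) and base-change of Fitting ideals for $\ell\neq p$, and for $\ell=p$ combine the vanishing $K_m(\mc{O}_{\mc{K}_0,S_0})\otimes\Z_p=0$ (a consequence of Geisser--Levine plus finiteness of the $K$-groups, which the paper extracts from Kolster's exposition and you rederive via the motivic spectral sequence) with Theorem~\ref{main theorem}(2). The only cosmetic difference is that you unpack the Geisser--Levine input directly rather than citing it through an intermediate source.
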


\begin{proof}
It will suffice to prove that we have equality of ideals in $\Z_\ell[G]$ after tensoring both sides with $\Z_\ell$, for each prime $\ell$.

If $\ell \neq p$, then the equality follows immediately from Theorem \ref{QL} combined with  part $(1)$ of Theorem \ref{main theorem} and the fact that Fitting ideals commute with base change.

If $\ell=p$, then the equality follows from part $(2)$ of the previous theorem and the fact that $K_{n}(\mc{O}_{\mc{K}_0, S_0}) \tensor \Z_p = 0$, for all $n>0$.  To see this we make a couple of observations.  First, Theorem 1.3 in \cite{Kolster} (a well known result of Geisser--Levine) and the $5$--term exact sequence which follows Theorem 1.5 in loc.cit., imply that $K_n(\mc{O}_{\mc{K}_0, S_0})$ has no $p$-torsion, for all $n>0$.  Fix an $n>0$. If $K_n(\mc{O}_{\mc{K}_0, S_0})$ had a free $\Z$--submodule, then $K_n(\mc{O}_{\mc{K}_0, S_0})\tensor \Z_\ell$ would not be finite for {\em any} $\ell$.  It is known that the groups $H^i_{\acute{e}t}(\mc{O}_{\mc{K}_0, S_0}, \Z_\ell(n))$ are finite for all $\ell \neq p$ and $i=1, 2$ and a final appeal to the Quillen-Lichtenbaum conjecture finishes the proof.
\end{proof}

\section{Algebraic Background}

In this section we introduce the relevant group rings and a number of functors on modules over these rings.  We then discuss some of the interplay between these functors and Fitting ideals.  The first Fitting ideal, from now on just referred to as {\em the Fitting ideal,} plays a key role in formulating refined versions of the classical conjectures on special values of $L$-functions. Most of the material in this section can be found in the literature and we will give references rather than complete proofs whenever possible.

\subsection{Some Functors on Modules over Group
Rings}\label{functors section}
Let $\mc{K}_0/\mc{K}_0'$ be a
finite abelian extension of characteristic $p$ global fields with
Galois group $G$.  We remind the reader that $\mc{K}_0$ and
$\mc{K}_0'$ are finite extensions of $\Bbb F_p(t)$, for some
variable $t$.

Throughout, all fields will be viewed as subfields of a fixed
separable closure of $\mc{K}_0$. Let $\kappa$ be an algebraic
closure of $\F_p$ and let $\mc{K} = \mc{K}_0\kappa$, $\mc{K}' =
\mc{K}_0' \kappa$ be field composita of $\kappa$ with $\mc{K}_0$
and $\mc{K}_0'$.  Let $\mc{G} := G(\mc{K}/\mc{K}_0')$. Since we
have an isomorphism of profinite groups $G(\mc{K}/\mc{K}_0)\simeq
\widehat\Z$ (where $\widehat \Z$ is the profinite completion of
$\Z$), the profinite group $\mc{G}$ is an extension of $G$ by
$\widehat \Z$.

Let $\ell$ be a prime.  A basic fact about the group ring
$\Z_\ell[G]$ is the following.  If $G'$ is the $\ell$--Sylow
subgroup of $G$ and $G \isom \Delta\times G'$, then we have a ring
isomorphism
\begin{equation}\label{decomposition}\Z_\ell[G] \isom \bigoplus _{\widetilde{\chi}} \Z_\ell[\chi][G'],\end{equation}
where the sum ranges over $G(\ol{\Q_\ell}/\Q_\ell)$--conjugacy classes of characters $\chi \in \widehat{\Delta}(\ol{\Q_\ell})$ (the group of $\ol{\Q_\ell}$--valued characters of $\Delta$)  and $\Z_\ell[\chi]$ is the ring generated over $\Z_\ell$ by the values of $\chi$.  Each $\Z_\ell[\chi][G']$ is a local ring with maximal ideal $\brkt{\ell, I_{G'}}$, where $I_{G'}$ is the augmentation ideal of $\Z_\ell[\chi][G']$.

Let $\Z_\ell[[\mc{G}]]$ be the $\ell$--adic profinite group ring
associated to $\mc{G}$, endowed with the usual $\ell$--profinite
topology. By a $\Z_\ell[[\mc{G}]]$--module we always mean a
finitely generated topological $\Z_\ell[[\mc{G}]]$--module. We
will record the definitions and some basic properties of several
functors defined on the category of $\Z_\ell[[\mc{G}]]$--modules.

Let $\Gamma$ be an open subgroup of $\mc{G}$ which is topologically cyclic with generator $\gamma$ (e.g. $\Gamma:=G(\mathcal K/\mathcal K_0)$.) The invariants and coinvariants functors associated to $\Gamma$ are
$$M^\Gamma = \{m\in M \textnormal{ $|$ } \gamma\cdot m = m\}, \qquad M_\Gamma = M/ \{m-\gamma \cdot m\textnormal{ $|$ } m \in M\},$$
respectively, for all $\Z_\ell[[\mathcal G]]$--modules $M$. Both $M^\Gamma$ and $M_\Gamma$ are naturally $\Z_\ell[\mc{G}/\Gamma]$-modules.  In particular, if $\Gamma = G(\mc{K}/\mc{K}_0)$ then both $M^\Gamma$ and $M_\Gamma$ are $\Z_\ell[G]$-modules.

\begin{lemma}\label{snake}
Suppose that $0\ra A\ra B\ra C\ra 0$ is a short exact sequence of finitely generated $\Z_\ell[[\mc{G}]]$-modules.  Then there is an exact sequence of $\Z_\ell[\mc{G}/\Gamma]$-modules
$$0 \ra A^\Gamma \ra B^\Gamma \ra C^\Gamma \ra A_\Gamma \ra B_\Gamma \ra C_\Gamma \ra 0.$$
\end{lemma}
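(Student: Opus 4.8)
The plan is to recognize this as the snake lemma applied to a single action of a cyclic operator. Since $\Gamma$ is topologically cyclic with generator $\gamma$, on any finitely generated $\Z_\ell[[\mc{G}]]$-module $M$ the submodule $\{m-\gamma\cdot m \mid m\in M\}$ is exactly the image of the $\Z_\ell[[\mc{G}]]$-linear endomorphism $(1-\gamma)\colon M\to M$, and $M^\Gamma=\ker(1-\gamma)$. (Here one must note that $\gamma$ acts continuously and that, because $M$ is finitely generated over the Noetherian ring $\Z_\ell[[\mc{G}]]$, taking invariants under the closure of $\brkt{\gamma}$ coincides with taking the kernel of $1-\gamma$; this is where the topological hypotheses are used.) Thus $M^\Gamma$ and $M_\Gamma$ are the kernel and cokernel of $1-\gamma$ acting on $M$, and both are naturally modules over $\Z_\ell[\mc{G}/\Gamma]$ since $1-\gamma$ is $\Z_\ell[\mc{G}]$-linear and the quotient $\Z_\ell[\mc{G}/\Gamma]$ is precisely the largest quotient on which $\gamma$ acts trivially.

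First I would form the commutative diagram with exact rows
\begin{equation*}
\begin{CD}
0 @>>> A @>>> B @>>> C @>>> 0\\
@. @V{1-\gamma}VV @V{1-\gamma}VV @V{1-\gamma}VV @.\\
0 @>>> A @>>> B @>>> C @>>> 0,
\end{CD}
\end{equation*}
which commutes because all the maps in the short exact sequence are $\Z_\ell[[\mc{G}]]$-linear, hence commute with multiplication by $1-\gamma$. The exactness of $0\ra A\ra B\ra C\ra 0$ as a sequence of $\Z_\ell[[\mc{G}]]$-modules is exactly what is hypothesized. Applying the snake lemma to this diagram yields the six-term exact sequence
$$0 \ra \ker(1-\gamma\mid A) \ra \ker(1-\gamma\mid B) \ra \ker(1-\gamma\mid C) \ra \operatorname{coker}(1-\gamma\mid A) \ra \operatorname{coker}(1-\gamma\mid B) \ra \operatorname{coker}(1-\gamma\mid C) \ra 0,$$
and by the identifications above this is precisely
$$0 \ra A^\Gamma \ra B^\Gamma \ra C^\Gamma \ra A_\Gamma \ra B_\Gamma \ra C_\Gamma \ra 0.$$
Finally I would check that the connecting homomorphism, and all the other maps, are $\Z_\ell[\mc{G}/\Gamma]$-linear: the induced maps on kernels and cokernels are restrictions and corestrictions of the original $\Z_\ell[\mc{G}]$-linear maps, hence $\Z_\ell[\mc{G}]$-linear, and since they land in modules on which $\Gamma$ acts trivially they factor through the $\Z_\ell[\mc{G}/\Gamma]$-action; the connecting map is built functorially from these and inherits the same linearity.

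The only genuinely delicate point — really the sole obstacle — is the identification $M^\Gamma=\ker(1-\gamma\mid M)$ and $M_\Gamma=\operatorname{coker}(1-\gamma\mid M)$ for the \emph{closed} subgroup $\Gamma$ when $\gamma$ only generates $\Gamma$ topologically rather than algebraically. For the coinvariants this is immediate: $\{m-\gamma g\cdot m\}$ for $g$ in a dense subgroup already generates the same submodule as $\gamma$ does by continuity, so no subtlety arises. For the invariants one uses that $M$ is finitely generated over the Noetherian profinite ring $\Z_\ell[[\mc{G}]]$, so $M^\Gamma$ is closed and fixed by all of $\Gamma$ iff it is fixed by the dense subgroup $\brkt{\gamma}$, i.e. iff it is killed by $1-\gamma$. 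Once this lemma is in place the rest is the formal snake lemma and a routine linearity bookkeeping, so I would present those steps briefly.
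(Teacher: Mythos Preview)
Your proposal is correct and follows exactly the paper's approach: apply the snake lemma to the diagram with vertical maps $1-\gamma$, using that $M^\Gamma$ and $M_\Gamma$ are by definition the kernel and cokernel of $1-\gamma$ on $M$. Note that the paper \emph{defines} $M^\Gamma=\{m:\gamma m=m\}$ and $M_\Gamma=M/\{m-\gamma m\}$ directly in terms of the chosen topological generator $\gamma$, so the topological subtlety you flag does not actually arise here.
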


\begin{proof}
This follows from the snake lemma combined with the exactness of $$0 \ra M^\Gamma \ra M \xra{1-\gamma} M \ra M_\Gamma \ra 0$$ for any $\Z_\ell[[\mc{G}]]$-module $M$.
\end{proof}

\begin{lemma}\label{finiteness of invariants}
Let $M$ be a $Z_\ell[[\mc{G}]]$--module which is finitely
generated over $\Z_\ell$, then $M_\Gamma$ is finite if and only if
$M^\Gamma$ is finite.
\end{lemma}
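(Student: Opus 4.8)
The plan is to deduce the statement from the four-term exact sequence
$$0 \ra M^\Gamma \ra M \xra{1-\gamma} M \ra M_\Gamma \ra 0$$
already recorded in the proof of Lemma~\ref{snake}, by base-changing to $\Q_\ell$ and comparing dimensions. The underlying observation is that the endomorphism $1-\gamma$ of $M$ has $M^\Gamma$ as kernel and $M_\Gamma$ as cokernel, and for an endomorphism of a finite-dimensional vector space the kernel vanishes if and only if the cokernel does.

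First I would observe that, since $\Z_\ell$ is Noetherian and $M$ is finitely generated over $\Z_\ell$, the submodule $M^\Gamma\subseteq M$ and the quotient $M_\Gamma$ of $M$ are both finitely generated $\Z_\ell$-modules. For any finitely generated module $N$ over the discrete valuation ring $\Z_\ell$, the structure theorem gives $N\isom \Z_\ell^{\,r}\oplus N_{\mathrm{tors}}$ with $N_{\mathrm{tors}}$ finite, so $N$ is finite if and only if $r=0$, i.e. if and only if $N\tensor_{\Z_\ell}\Q_\ell=0$. This is the only place where the hypothesis that $M$ is finitely generated over $\Z_\ell$ — and not merely over $\Z_\ell[[\mc{G}]]$ — is used, and it is essential.

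Next, since $\Q_\ell$ is flat over $\Z_\ell$, applying $-\tensor_{\Z_\ell}\Q_\ell$ to the exact sequence above yields an exact sequence of finite-dimensional $\Q_\ell$-vector spaces
$$0 \ra M^\Gamma\tensor_{\Z_\ell}\Q_\ell \ra M\tensor_{\Z_\ell}\Q_\ell \xra{1-\gamma} M\tensor_{\Z_\ell}\Q_\ell \ra M_\Gamma\tensor_{\Z_\ell}\Q_\ell \ra 0.$$
Taking the alternating sum of $\Q_\ell$-dimensions forces $\dim_{\Q_\ell}\bigl(M^\Gamma\tensor_{\Z_\ell}\Q_\ell\bigr)=\dim_{\Q_\ell}\bigl(M_\Gamma\tensor_{\Z_\ell}\Q_\ell\bigr)$, so one of these vector spaces is zero precisely when the other is. Combining this with the previous paragraph gives the chain of equivalences: $M^\Gamma$ is finite $\iff M^\Gamma\tensor_{\Z_\ell}\Q_\ell=0 \iff M_\Gamma\tensor_{\Z_\ell}\Q_\ell=0 \iff M_\Gamma$ is finite, which is the assertion of the lemma. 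There is no genuine obstacle here; the argument is routine once one isolates the point that finiteness of a finitely generated $\Z_\ell$-module is detected by the vanishing of its $\Q_\ell$-base change.
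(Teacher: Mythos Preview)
Your proof is correct and follows essentially the same approach as the paper's own proof: tensor the four-term exact sequence with $\Q_\ell$, use the alternating sum of dimensions to conclude that $\dim_{\Q_\ell}(M^\Gamma\tensor\Q_\ell)=\dim_{\Q_\ell}(M_\Gamma\tensor\Q_\ell)$, and combine this with the observation that a finitely generated $\Z_\ell$-module is finite if and only if its base change to $\Q_\ell$ vanishes. Your write-up is somewhat more explicit about why the auxiliary facts hold (Noetherianity of $\Z_\ell$, flatness of $\Q_\ell$), but the argument is the same.
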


\begin{proof}
As $M$ is finitely generated over $\Z_\ell$, $M^\Gamma$ is finite if and only if $M^\Gamma \tensor_{\Z_\ell}\Q_\ell = 0$ and similarly for $M_\Gamma$. The alternating sum of the $\Q_\ell$-vector space dimensions in
$$0 \ra M^\Gamma \tensor_{\Z_\ell} \Q_\ell \ra M\tensor_{\Z_\ell}\Q_\ell \xra{(1-\gamma)\tensor 1} M\tensor_{\Z_\ell}\Q_\ell \ra M_\Gamma \tensor_{\Z_\ell}\Q_\ell \ra 0$$ is $0$ and so we have that $\mathrm{dim}_{\Q_\ell}(M_\Gamma \tensor_{\Z_\ell}\Q_\ell) = \mathrm{dim}_{\Q_\ell}(M^\Gamma \tensor_{\Z_\ell}\Q_\ell)$.  The lemma follows.
\end{proof}

\begin{definition} If $M$ is a $\Z_\ell[[\mc{G}]]$--module, then define

\begin{enumerate}

\item $M^* = \mathrm{Hom}_{\Z_\ell}(M, \Z_\ell)$, with
    $\mc{G}$--action $(g\cdot \vp)(m) = \vp(g^{-1}\cdot m)$;

\item $M^\vee = \mathrm{Hom}_{\Z_\ell}(M, \Q_\ell/\Z_\ell)$,
    with $\mc{G}$--action $(g\cdot\vp)(m) = \vp(g^{-1}\cdot
    m)$;

\item $M^\wedge = \mathrm{Hom}_{\Z_\ell}(M, \Q_\ell/\Z_\ell)$,  with $\mc{G}$--action $(g\cdot\vp)(m) = \vp(g\cdot m)$.
\end{enumerate}
By continuity, each of these $\mc{G}$--actions extends naturally and uniquely
to an action of $\Z_\ell[[\mc{G}]]$.
\end{definition}

\begin{lemma} \label{duality}
Suppose that $M$ is a finitely generated $\Z_\ell[[\mc{G}]]$-module which is $\Z_\ell$-free and that $M_\Gamma$ is finite.  Then there is an isomorphism of $\Z_\ell[[\mc{G}]]$-modules:
$$(M_\Gamma)^\vee \isom (M^*)_\Gamma$$
\end{lemma}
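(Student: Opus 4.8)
The plan is to produce the isomorphism by dualizing the defining four-term exact sequence for $M^\Gamma$ and $M_\Gamma$ and identifying the pieces. Since $M$ is $\Z_\ell$-free and finitely generated, the sequence
$$0 \ra M^\Gamma \ra M \xra{1-\gamma} M \ra M_\Gamma \ra 0$$
breaks into two short exact sequences: writing $N$ for the image of $1-\gamma$, we get $0\ra M^\Gamma\ra M\ra N\ra 0$ and $0\ra N\ra M\ra M_\Gamma\ra 0$. Because $M_\Gamma$ is finite by hypothesis, $N$ has finite index in $M$, hence $N$ is itself $\Z_\ell$-free of the same rank as $M$, and $M^\Gamma$ is a $\Z_\ell$-free direct summand-free module which must actually be $0$ or infinite; combined with Lemma~\ref{finiteness of invariants} the finiteness of $M_\Gamma$ forces $M^\Gamma$ finite, hence $M^\Gamma = 0$, so in fact $1-\gamma$ is injective on $M$ with finite cokernel $M_\Gamma$. (We should double-check we want this degenerate conclusion; if $M^\Gamma$ need not vanish the argument below adapts by carrying $M^\Gamma$ along, but under these hypotheses it does vanish.)

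First I would apply $\mathrm{Hom}_{\Z_\ell}(-,\Z_\ell)$, which is exact on $\Z_\ell$-free modules, to $0\ra M\xra{1-\gamma}M\ra M_\Gamma\ra 0$. Since $M_\Gamma$ is finite, $\mathrm{Hom}_{\Z_\ell}(M_\Gamma,\Z_\ell)=0$ and $\mathrm{Ext}^1_{\Z_\ell}(M_\Gamma,\Z_\ell)\isom (M_\Gamma)^\vee$ canonically. The long exact sequence therefore reads
$$0\ra M^*\xra{1-\gamma^{-1}}M^*\ra (M_\Gamma)^\vee\ra 0,$$
where the middle map is the transpose of $1-\gamma$ acting on $M^*$; a short computation with the definition of the $\mc{G}$-action on $M^*$ in part~(1) of the Definition shows this transpose is $1-\gamma^{-1}$, which on $M^*$ differs from $1-\gamma$ only by the unit $-\gamma^{-1}$, hence has the same cokernel. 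Thus $(M_\Gamma)^\vee \isom \mathrm{coker}(1-\gamma \text{ on } M^*) = (M^*)_\Gamma$ as $\Z_\ell[\mc{G}/\Gamma]$-modules, and by continuity as $\Z_\ell[[\mc{G}]]$-modules.

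The one point requiring care — and the main obstacle — is checking that every identification is $\mc{G}$-equivariant, not merely $\Z_\ell$-linear: specifically that the connecting map $\mathrm{Ext}^1_{\Z_\ell}(M_\Gamma,\Z_\ell)\isom (M_\Gamma)^\vee$ respects the $\mc{G}$-actions, and that passing from the transpose of $1-\gamma$ to $1-\gamma^{-1}$ is compatible with how $\Gamma$ acts on $M^*$. Both are routine but must be spelled out, since the asymmetry between the actions in Definition parts~(1)–(3) is exactly what makes the statement nontrivial; I would verify equivariance of the $\mathrm{Ext}$-duality by using the functorial long exact sequence of $\mathrm{Hom}_{\Z_\ell}(-,\Z_\ell)$ applied to the $\mc{G}$-equivariant sequence $0\ra M\ra M\ra M_\Gamma\ra 0$, so that $\mc{G}$-equivariance is automatic. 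Finally I would note that all modules in sight are finitely generated over $\Z_\ell$, so the $\Z_\ell[[\mc{G}]]$-module structure is determined by the $\Z_\ell[\mc{G}/\Gamma']$-structure for a suitable open $\Gamma'\le\Gamma$, and continuity upgrades the $\Z_\ell[\mc{G}/\Gamma]$-isomorphism to the asserted $\Z_\ell[[\mc{G}]]$-isomorphism.
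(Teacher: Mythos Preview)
Your argument is correct. The key steps---observing that $M^\Gamma=0$ (since it is finite and contained in a $\Z_\ell$-free module), applying $\mathrm{Hom}_{\Z_\ell}(-,\Z_\ell)$ to the short exact sequence $0\ra M\xra{1-\gamma}M\ra M_\Gamma\ra 0$, identifying the connecting map with $\mathrm{Ext}^1_{\Z_\ell}(M_\Gamma,\Z_\ell)\isom (M_\Gamma)^\vee$ via the sequence $0\ra\Z_\ell\ra\Q_\ell\ra\Q_\ell/\Z_\ell\ra 0$, and noting that the transpose of $1-\gamma$ is $1-\gamma^{-1}=-\gamma^{-1}(1-\gamma)$ on $M^*$---are all sound, and equivariance follows from functoriality as you indicate (here $\mc{G}$ is abelian, since it embeds in $G\times\Gamma'$, so $1-\gamma$ commutes with the $\mc{G}$-action).

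As for comparison: the paper does not give its own proof of this lemma but simply cites Lemma~5.16 of \cite{GP1}. Your direct $\mathrm{Ext}$-based argument is the standard way to establish such a duality and is presumably close in spirit to what appears in the cited reference; in any case, what you have written stands on its own.
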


\begin{proof}
See Lemma 5.16 in \cite{GP1}.
\end{proof}

Let $\mu_{\ell^\infty}$ denote the group of $\ell$-power roots of
unity in $\mc{K}$.  Since $\mu_{\ell^\infty}\subseteq \mc{K}$, we
have the $\ell$-cyclotomic character $c_\ell:\mc{G} \ra
\Z_\ell^\times$ which is characterized by $g\cdot \zeta =
\zeta^{c_\ell(g)}$, for all $\zeta \in \mu_{\ell^\infty}$ and all
$g\in \mc{G}$. This allows us to define a family of continuous
$\Z_\ell$-algebra automorphisms $t_n: \Z_\ell[[\mc{G}]] \ra
\Z_\ell[[\mc{G}]]$ which are uniquely characterized by $t_n(g) =
c_\ell(g)^n g$, for all $n\in\Z$ and all $g \in \mc{G}$.  It easy
to check that $(t_n)^{-1} = t_{-n}$.  If $M$ is a
$\Z_\ell[[\mc{G}]]$-module, then the $n$--th Tate twist $M(n)$ of
$M$ is the module obtained by extending scalars along the morphism
$t_{-n}$. The $\mc{G}$--action on $M(n)$ is given by
$$g*m = c_\ell(g)^n g\cdot m.$$
It is straightforward to check that one has canonical $\Z_\ell[[\mathcal G]]$--module isomorphisms
$$M(n)\simeq M\otimes_{\Z_\ell}\Z_\ell(n), \qquad M(n)(m)\simeq M(n+m),$$
for all $n, m \in \Z$. Clearly, the functor $M\to M(n)$ is exact for all $n$.

\begin{remark}\label{eigenvalues of twists}
If $\gamma$ is a topological generator of $\Gamma$ which restricts
to the $q^\alpha$-power Frobenius on $\kappa$, then we have
$c_\ell(\gamma) = q^\alpha$.  A simple calculation shows that if
$V$ is a $\Q_\ell$--vector space on which $\gamma$ acts with
eigenvalue $\lambda$, then $\gamma$ acts on $V(n)$ with eigenvalue
$q^{n\alpha}\lambda$.
\end{remark}

\begin{remark}\label{twists of duals}
We can immediately verify the isomorphisms of $\Z_\ell[[\mc{G}]]$-modules:
$$M^*(n) \isom M(-n)^*\textrm{, and }M^\vee(n)\simeq M(-n)^\vee.$$\
\end{remark}

\subsection{Fitting Ideals}
Let $R$ be a commutative ring.  We refer the reader to the
Appendix in \cite{Mazur-Wiles} for the definition of the Fitting
ideal $\mathrm{Fit}_R(M)$ of a finitely generated $R$-module $M$.
Here we record some basic facts about Fitting ideals over
arbitrary rings and then present some properties that are special
to modules over the group rings $\Z_\ell[G]$.  The notion of
projective dimension plays an important role in the calculation of
Fitting ideals. Throughout the rest of this paper
$\mathrm{pd}_R(M)$ will mean the projective dimension of the $R$--module $M$.

\begin{proposition}\label{properties of fitting ideals}
Let $M, N$ be finitely generated $R$-modules, then

\begin{enumerate}

\item $\mathrm{Fit}_R(M) \subseteq \mathrm{Ann}_R(M)$,

\item $\mathrm{Fit}_R(R/I) = \mathrm{Ann}_R(R/I) = I$,

\item $\mathrm{Fit}_R(M\oplus N) = \mathrm{Fit}_R(M)\cdot \mathrm{Fit}_R(N)$.

\item If $R\xra{\pi} S$ is a morphism of rings, then $\pi(\mathrm{Fit}_R(M))\cdot S = \mathrm{Fit}_S(M\tensor_R S)$.
\end{enumerate}
\end{proposition}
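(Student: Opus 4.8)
The plan is to argue directly from the definition of the Fitting ideal in terms of presentation matrices, exactly as set up in the Appendix to \cite{Mazur-Wiles}. Recall that if $R^{(\Lambda)} \xra{\varphi} R^b \ra M \ra 0$ is a finite presentation of $M$ (we allow the source to be free on an arbitrary index set $\Lambda$ when $M$ is merely finitely generated), then $\mathrm{Fit}_R(M)$ is the ideal of $R$ generated by the $b\times b$ minors of a matrix representing $\varphi$, subject to the standard conventions that this ideal is $0$ when there are fewer than $b$ columns and is all of $R$ when $b=0$. Independence of the chosen presentation is part of what is cited, so I take it as given and never need to revisit it.

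For (1) it suffices to show that each $b\times b$ minor of $\varphi$ annihilates $M$: if $\Phi$ is the $b\times b$ submatrix on some choice of $b$ columns, the adjugate identity $\mathrm{adj}(\Phi)\,\Phi = \det(\Phi)\, I_b$ shows that $\det(\Phi)\, e_i$ lies in the image of $\varphi$ for every standard basis vector $e_i$ of $R^b$, so $\det(\Phi)$ kills $R^b/\mathrm{im}(\varphi) = M$. Part (2) is then essentially immediate: $R/I$ is presented by the map from a free module on a generating set of $I$ to $R$ sending basis vectors to the corresponding elements, whose $1\times 1$ minors are exactly (generators of) $I$, so $\mathrm{Fit}_R(R/I) = I$; and $\mathrm{Ann}_R(R/I) = \{r\in R : rR \subseteq I\} = I$ directly, the middle inclusion $\mathrm{Fit}_R(R/I) \subseteq \mathrm{Ann}_R(R/I)$ being a special case of (1).

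For (3), if $M$ and $N$ are presented by $\varphi\colon R^{(\Lambda)}\ra R^b$ and $\psi\colon R^{(M)}\ra R^d$, then $M\oplus N$ is presented by the block-diagonal map $\varphi\oplus\psi\colon R^{(\Lambda)}\oplus R^{(M)} \ra R^{b+d}$. The key point is a Laplace-expansion computation: a $(b+d)\times(b+d)$ submatrix of $\varphi\oplus\psi$ is obtained by choosing $b_1$ columns from the first block and $b_2$ from the second with $b_1+b_2 = b+d$, and expanding its determinant along the first $b$ rows shows it vanishes unless $b_1 = b$ and $b_2 = d$, in which case it equals the product of the corresponding $b\times b$ minor of $\varphi$ with a $d\times d$ minor of $\psi$. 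Hence the generators of $\mathrm{Fit}_R(M\oplus N)$ are precisely the products of a generator of $\mathrm{Fit}_R(M)$ with one of $\mathrm{Fit}_R(N)$, giving the asserted equality (the degenerate cases, e.g. too few columns in one block, are absorbed by the conventions above). For (4), apply the right-exact functor $-\tensor_R S$ to the presentation of $M$ to obtain a presentation $S^{(\Lambda)} \xra{\varphi\tensor 1} S^b \ra M\tensor_R S \ra 0$; the matrix of $\varphi\tensor 1$ is obtained by applying $\pi$ entrywise, and since determinant commutes with ring homomorphisms its $b\times b$ minors are the $\pi$-images of those of $\varphi$. Therefore $\mathrm{Fit}_S(M\tensor_R S)$ is the $S$-ideal generated by $\pi(\mathrm{Fit}_R(M))$, i.e. $\pi(\mathrm{Fit}_R(M))\cdot S$.

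None of these steps is a genuine obstacle: all four assertions are classical (see, for instance, Northcott's \emph{Finite Free Resolutions}), and the only place that deserves a sentence of care rather than a one-line dismissal is the block-matrix minor identity in (3) together with the bookkeeping of the degenerate conventions; everything else reduces to the adjugate identity and the compatibility of determinants with ring maps and with right-exact base change.
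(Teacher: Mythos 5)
Your proof is correct, and the arguments you give (adjugate identity, block-diagonal Laplace expansion, right-exactness of base change combined with compatibility of minors with ring maps) are the standard ones. The paper itself offers no proof, only the citation ``See the Appendix in \cite{Mazur-Wiles} and the references therein,'' so you have simply supplied the content of that reference rather than taken a different route.
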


\begin{proof}
See the Appendix in \cite{Mazur-Wiles} and the references therein.
\end{proof}

\begin{proposition}\label{linear algebra}
Let $M$ be a $\Z_\ell[[\mc{G}]]$-module and let $m\in \Z$.  Then
$$\mathrm{Fit}_{\Z_\ell[[\mc{G}]]}(M(m)) = t_{-m}(\mathrm{Fit}_{\Z_\ell[[\mc{G}]]}(M)).$$
\end{proposition}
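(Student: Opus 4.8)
The plan is to reduce the statement to the way $\mathrm{Fit}$ is computed from a presentation and then track how the Tate twist acts on such a presentation. First I would pick a finite presentation of $M$ as a $\Z_\ell[[\mc{G}]]$-module, say an exact sequence $\Z_\ell[[\mc{G}]]^a \xra{\;\Phi\;} \Z_\ell[[\mc{G}]]^b \ra M \ra 0$ with $b$ minimal (this exists because we only consider finitely generated topological modules). By definition, $\mathrm{Fit}_{\Z_\ell[[\mc{G}]]}(M)$ is the ideal generated by the $b\times b$ minors of the matrix $\Phi$ (and is all of $\Z_\ell[[\mc{G}]]$ if $a<b$); when $a<b$ is irrelevant since $t_{-m}$ is a ring automorphism fixing $\Z_\ell[[\mc{G}]]$. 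The key observation is that $M(m)$ is obtained by extending scalars along the ring automorphism $t_{-m}$, so applying $t_{-m}$ entrywise to $\Phi$ yields a presentation $\Z_\ell[[\mc{G}]]^a \xra{\;t_{-m}(\Phi)\;} \Z_\ell[[\mc{G}]]^b \ra M(m) \ra 0$ of $M(m)$.

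The next step is purely formal: since $t_{-m}$ is a ring homomorphism, the $b\times b$ minors of $t_{-m}(\Phi)$ are exactly the images under $t_{-m}$ of the $b\times b$ minors of $\Phi$, because the determinant is a polynomial in the matrix entries with integer coefficients and ring homomorphisms commute with such polynomial expressions. Therefore the ideal generated by the minors of $t_{-m}(\Phi)$ is $t_{-m}$ applied to the ideal generated by the minors of $\Phi$, i.e.
$$\mathrm{Fit}_{\Z_\ell[[\mc{G}]]}(M(m)) = t_{-m}\big(\mathrm{Fit}_{\Z_\ell[[\mc{G}]]}(M)\big).$$
Here I am using that $t_{-m}$ is a bijective ring automorphism, so "$t_{-m}$ applied to the ideal generated by a set $T$" equals "the ideal generated by $t_{-m}(T)$"; for a non-surjective homomorphism one would only get a containment, which is why it matters that $(t_n)^{-1} = t_{-n}$, as recorded in the excerpt.

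The only genuine subtlety — and the step I expect to be the main (minor) obstacle — is checking that extension of scalars along $t_{-m}$ really does reproduce the Tate twist $M(m)$ at the level of presentations, i.e. that the functor $N \mapsto N \otimes_{\Z_\ell[[\mc{G}]], t_{-m}} \Z_\ell[[\mc{G}]]$ agrees with the Tate twist defined in the text and is right exact (hence carries presentations to presentations). This is immediate from the definition given in the excerpt, where $M(m)$ is literally defined as the module obtained by extending scalars along $t_{-m}$, together with the right exactness of tensor product; one also notes that the functor is exact, as already remarked. With that in hand, the argument above is complete, and nothing deeper than the definition of Fitting ideals via minors and the automorphism property of $t_n$ is needed.
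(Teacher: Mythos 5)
Your proof is correct and is the natural one: base-change a presentation along the automorphism $t_{-m}$, use that determinants (being integer polynomials in the matrix entries) commute with ring homomorphisms so that the minors of $t_{-m}(\Phi)$ are exactly the $t_{-m}$-images of the minors of $\Phi$, and then use that the image of an ideal under a surjective ring map is again an ideal. The paper's ``proof'' of this proposition is nothing but a citation of Lemma 3.1 of \cite{CS}, so there is no in-text argument to compare against, but what you wrote is exactly the expected argument that the cited lemma carries out. One small correction: when the presentation $\Z_\ell[[\mc{G}]]^a \ra \Z_\ell[[\mc{G}]]^b \ra M \ra 0$ has $a<b$, the Fitting ideal is the zero ideal (there are no $b\times b$ minors), not the unit ideal as you parenthetically wrote; since $t_{-m}(0)=0$, the stated equality still holds trivially in that degenerate case, so the slip does not affect the validity of your argument.
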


\begin{proof}
See Lemma 3.1 in \cite{CS}.
\end{proof}

In the next proposition we need a particular $\Z_\ell$-algebra involution of $\Z_\ell[G]$.  Define $\iota: G \ra G$ by $\iota(g) = g^{-1}$ and extend to $\Z_\ell[G]$ by $\Z_\ell$-linearity.

\begin{proposition}\label{projective dimension argument}
Let $G$ be a finite abelian group and suppose that $M$ is a finite $\Z_\ell[G]$-module with $\mathrm{pd}_{\Z_\ell[G]}(M) = 1$.  Then $\mathrm{pd}_{\Z_\ell[G]}(M^\vee)=1$ and $$\mathrm{Fit}_{\Z_\ell[G]}(M^\vee) = \iota(\mathrm{Fit}_{\Z_\ell[G]}(M)).$$  Furthermore, this Fitting ideal is principal, generated by an element which is not a zero-divisor in $\Z_\ell[G]$.
\end{proposition}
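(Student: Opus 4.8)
The plan is to reduce to a square free presentation of $M$ over $R:=\Z_\ell[G]$ and then dualize it, keeping the $G$-action honest throughout. First I would produce an exact sequence $0\ra R^m\xra{A}R^m\ra M\ra 0$ with $A$ an injective $m\times m$ matrix over $R$. Since $\mathrm{pd}_R(M)=1$, there is a length-one resolution $0\ra P_1\ra P_0\ra M\ra 0$ by finitely generated projective $R$-modules; after replacing $P_0$ by $P_0\oplus Q$ with $P_0\oplus Q$ free and $P_1$ by $P_1\oplus Q$, we may assume $P_0=R^m$, and then $P_1\tensor_{\Z_\ell}\Q_\ell\isom R^m\tensor_{\Z_\ell}\Q_\ell$ because $M$ is finite. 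As $R$ is a finite product of local rings by \eqref{decomposition}, a finitely generated projective $R$-module that becomes free of rank $m$ after base change to $\Q_\ell[G]$ is itself free of rank $m$ (check each local factor), so $P_1\isom R^m$. Base change of $A$ to $\Q_\ell[G]$ is an isomorphism (again because $M$ is finite), so $\det A$ is a unit of $\Q_\ell[G]$, hence a non-zero-divisor of $R$ since $R\injects R\tensor_{\Z_\ell}\Q_\ell=\Q_\ell[G]$; and $\mathrm{Fit}_R(M)=\mathrm{Fit}_R(\mathrm{coker}\,A)$ is the ideal generated by the $m\times m$ minors of $A$, i.e.\ $(\det A)$. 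This already settles the last sentence of the proposition.

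For $M^\vee$ I would pass through the companion dual $M^\wedge$ (the same underlying $\Z_\ell$-module, but with $G$ acting by $(g\vp)(m)=\vp(g\cdot m)$). The key point is that $M^\wedge$ has the length-one free presentation $0\ra R^m\xra{A^{\mathrm t}}R^m\ra M^\wedge\ra 0$, where $A^{\mathrm t}$ is the transpose. To see this, apply the exact functor $\mathrm{Hom}_{\Z_\ell}(-,\Q_\ell/\Z_\ell)$ to the presentation of $M$ (using that $\Q_\ell/\Z_\ell$ is $\Z_\ell$-injective) and identify the resulting complex $R$-equivariantly with the displayed one: the $G$-invariant trace form on $R$ sending $(x,y)$ to the coefficient of $1\in G$ in $xy$ gives an $R$-module isomorphism $\mathrm{Hom}_{\Z_\ell}(R,\Z_\ell)\isom R$, and one then either passes through $\mathrm{Ext}^1_R(M,R)\isom\mathrm{Ext}^1_{\Z_\ell}(M,\Z_\ell)\isom M^\wedge$ (using $\mathrm{Hom}_R(M,R)=0=\mathrm{Hom}_{\Z_\ell}(M,\Q_\ell)$, as $M$ is torsion) or runs the snake lemma (Lemma \ref{snake}) against $0\ra R^m\ra R[1/\ell]^m\ra(R[1/\ell]/R)^m\ra 0$, on which $A^{\mathrm t}$ acts invertibly. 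From this presentation, $\mathrm{pd}_R(M^\wedge)\le 1$ and $\mathrm{Fit}_R(M^\wedge)=(\det A^{\mathrm t})=(\det A)=\mathrm{Fit}_R(M)$.

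Now $M^\vee$ is, by the very definition of its $G$-action, $M^\wedge$ with module structure pulled back along the involution $\iota$; applying $\iota$ entrywise to the presentation above yields $0\ra R^m\xra{\iota(A^{\mathrm t})}R^m\ra M^\vee\ra 0$, so $\mathrm{pd}_R(M^\vee)\le 1$ and $\mathrm{Fit}_R(M^\vee)=(\det\iota(A^{\mathrm t}))=\iota(\det A)R=\iota(\mathrm{Fit}_R(M))$ — the last equality is also immediate from Proposition \ref{properties of fitting ideals}(4) applied to $\iota:R\ra R$. Finally $M^\vee$ is finite and nonzero, hence not projective over $R$ (the only finite projective $R$-module is zero), so $\mathrm{pd}_R(M^\vee)=1$ exactly.

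The one genuinely delicate step is the $R$-equivariant identification of $M^\wedge$ with $\mathrm{coker}(A^{\mathrm t})$: the transpose and the single $\iota$-twist must be accounted for exactly, so that no spurious extra twist survives in the final formula. The reduction to free modules in the first paragraph also rests on the (standard, but worth noting) fact that $\Z_\ell[G]$ is a finite product of local rings; everything else is routine bookkeeping.
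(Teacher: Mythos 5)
Your argument is correct and is essentially the proof the paper delegates to the cited references: Lemma 6 of \cite{Burns-Greither} (square free presentation $0\to \Z_\ell[G]^m\xra{A}\Z_\ell[G]^m\to M\to 0$, transpose to present $M^\wedge$, then twist by $\iota$ to pass to $M^\vee$) together with Lemma 2.1 of \cite{CS} (principality and the non-zero-divisor property via $\det A$). You have simply unwound those citations into a self-contained computation, with the same care the paper flags regarding the single $\iota$-twist that separates $\mathrm{Fit}_{\Z_\ell[G]}(M^\vee)$ from $\mathrm{Fit}_{\Z_\ell[G]}(M^\wedge)=\mathrm{Fit}_{\Z_\ell[G]}(M)$.
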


\begin{proof}
Lemma $6$ in \cite{Burns-Greither} deals with $M^\wedge$ instead
of $M^\vee$ but the proof in loc.cit. can easily be adapted to our
situation. Once we observe that, in the notation of
\cite{Burns-Greither}, $\alpha^T$ must be replaced by
$\iota(\alpha^T)$, the equality $\mathrm{Fit}_{\Z_\ell[G]}(M^\vee)
= \iota(\mathrm{Fit}_{\Z_\ell[G]}(M))$ follows from their
argument. In addition, it follows from the proof in loc.cit. that
$\mathrm{pd}_{\Z_\ell[G]}(M^\vee)=1$ and then Lemma 2.1 in
\cite{CS} implies that $\mathrm{Fit}_{\Z_\ell[G]}(M^\vee)$ is
principal and generated by a non zero-divisor in $\Z_\ell[G]$.
\end{proof}

\begin{lemma}\label{fitting and annhilators}
Let $M$ be a $\Z_\ell[G]$-module which is cyclic as an abelian group.  Then $$\mathrm{Fit}_{\Z_\ell[G]}(M) =\mathrm{Ann}_{\Z_\ell[G]}(M) = \mathrm{Ann}_{\Z_\ell[G]}(M^\wedge) = \mathrm{Fit}_{\Z_\ell[G]}(M^\wedge).$$
\end{lemma}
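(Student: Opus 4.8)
The plan is to derive all four equalities from Proposition \ref{properties of fitting ideals}(2), once one knows that both $M$ and $M^\wedge$ are cyclic as $\Z_\ell[G]$-modules, together with a direct comparison of the two annihilator ideals.

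First I would record the structure of $M$. As a finitely generated $\Z_\ell[G]$-module, $M$ is finitely generated over $\Z_\ell$; being moreover cyclic as an abelian group it cannot contain a copy of $\Z$ (which carries no $\Z_\ell$-module structure, since $1-\ell$ is a unit in $\Z_\ell$ but not in $\Z$), and its $q$-primary part vanishes for every prime $q\neq\ell$ (multiplication by $q$ would be both nilpotent and invertible). Hence $M\isom\Z_\ell/\ell^a\Z_\ell$ as a $\Z_\ell$-module for some $a\geq 0$. More to the point, any generator $m_0$ of $M$ as an abelian group is a fortiori a generator of $M$ over $\Z_\ell[G]$, so the surjection $\Z_\ell[G]\to M$, $x\mapsto xm_0$, identifies $M$ with $\Z_\ell[G]/I$, where $I=\mathrm{Ann}_{\Z_\ell[G]}(m_0)=\mathrm{Ann}_{\Z_\ell[G]}(M)$ (the last equality because $m_0$ generates $M$ and $\Z_\ell[G]$ is commutative). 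Proposition \ref{properties of fitting ideals}(2) then gives $\mathrm{Fit}_{\Z_\ell[G]}(M)=\mathrm{Ann}_{\Z_\ell[G]}(M)$, the first equality.

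Next I would check that $M^\wedge$ is again cyclic as an abelian group, so the same argument applies to it. Applying the exact functor $\mathrm{Hom}_{\Z_\ell}(-,\Q_\ell/\Z_\ell)$ to a surjection $\Z_\ell\to M$ embeds $M^\wedge$ into $\mathrm{Hom}_{\Z_\ell}(\Z_\ell,\Q_\ell/\Z_\ell)\isom\Q_\ell/\Z_\ell$; since $M^\wedge$ is finite and every finite subgroup of $\Q_\ell/\Z_\ell$ is cyclic, $M^\wedge$ is cyclic as an abelian group. As before, it is then cyclic as a $\Z_\ell[G]$-module, and Proposition \ref{properties of fitting ideals}(2) yields $\mathrm{Fit}_{\Z_\ell[G]}(M^\wedge)=\mathrm{Ann}_{\Z_\ell[G]}(M^\wedge)$, the last equality.

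It remains to prove $\mathrm{Ann}_{\Z_\ell[G]}(M)=\mathrm{Ann}_{\Z_\ell[G]}(M^\wedge)$. Unwinding the definition of the $\wedge$-action, for $a\in\Z_\ell[G]$ and $\vp\in M^\wedge$ one has $(a\cdot\vp)(m)=\vp(am)$ for all $m\in M$. Thus $a$ annihilates $M^\wedge$ if and only if every $\vp\in M^\wedge$ vanishes on the submodule $aM:=\{am\mid m\in M\}$; since $\mathrm{Hom}_{\Z_\ell}(M,\Q_\ell/\Z_\ell)$ separates the points of the finite $\Z_\ell$-module $M$ (using injectivity of $\Q_\ell/\Z_\ell$), this holds if and only if $aM=0$, i.e. if and only if $a\in\mathrm{Ann}_{\Z_\ell[G]}(M)$. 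Chaining the three displayed equalities proves the lemma. The only points that require any care — and which I regard as the crux — are the observation that ``cyclic as an abelian group'' upgrades to ``cyclic as a $\Z_\ell[G]$-module'' and the verification that $M^\wedge$ stays cyclic as an abelian group; the equality of annihilators is then a short unwinding of the (covariant) $\wedge$-action, as opposed to the contragredient $\vee$-action.
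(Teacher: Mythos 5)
Your proof is correct and takes essentially the same approach as the paper: both obtain the outer equalities from Proposition \ref{properties of fitting ideals}(2) applied to $M$ and $M^\wedge$, and handle the middle equality $\mathrm{Ann}(M)=\mathrm{Ann}(M^\wedge)$ separately. The only difference is cosmetic — you explicitly verify that $M$ and $M^\wedge$ are cyclic over $\Z_\ell[G]$ and prove the annihilator equality directly via separation of points, whereas the paper leaves the cyclicity implicit and instead proves one containment and invokes $(M^\wedge)^\wedge\isom M$ for the other.
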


\begin{proof}
The two outer equalities are contained in Proposition \ref{properties of fitting ideals} so it will suffice to prove that $$\mathrm{Ann}_{\Z_\ell[G]}(M) = \mathrm{Ann}_{\Z_\ell[G]}(M^\wedge).$$
Due to the obvious $\Z_\ell[G]$-module isomorphism $(M^\wedge)^\wedge \isom M$ it will suffice to prove that $\mathrm{Ann}_{\Z_\ell[G]}(M) \subseteq \mathrm{Ann}_{\Z_\ell[G]}(M^\wedge)$.  For this, suppose that $x\cdot M =0$ for some $x\in \Z_\ell[G]$ and let $f\in M^\wedge$.  Then $(x\cdot f)(m) = f(x\cdot m) =f(0) =0$, for all $m\in M$.  This implies the containment and concludes the proof.
\end{proof}

The next property of Fitting ideals plays a key role in the upcoming calculations.

\begin{proposition}\label{four term lemma}
Suppose that
$$0\ra A\ra B\ra C \ra D \ra 0$$
is an exact sequence of $\Z_\ell[G]$-modules which are all finite and which also satisfy $\mathrm{pd}_{\Z_\ell[G]}(B)\leq 1$ and $\mathrm{pd}_{\Z_\ell[G]}(C) \leq 1$.  Then we have
 $$\mathrm{Fit}_{\Z_\ell[G]}(A^{\wedge})\cdot \mathrm{Fit}_{\Z_\ell[G]}(C) = \mathrm{Fit}_{\Z_\ell[G]}(B)\cdot \mathrm{Fit}_{\Z_\ell[G]}(D).$$
\end{proposition}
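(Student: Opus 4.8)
The plan is to break the four-term exact sequence $0\to A\to B\to C\to D\to 0$ into two short exact sequences by introducing the image $E$ of the map $B\to C$, namely
\[
0\ra A\ra B\ra E\ra 0, \qquad 0\ra E\ra C\ra D\ra 0.
\]
The strategy is then to compute the Fitting ideal of each term using the multiplicativity of Fitting ideals in short exact sequences, which holds \emph{provided the relevant modules have projective dimension at most $1$}. Since $B$ and $C$ have $\mathrm{pd}_{\Z_\ell[G]}\le 1$ by hypothesis, and every $\Z_\ell[G]$-module of finite projective dimension over the (non-regular) ring $\Z_\ell[G]$ has projective dimension $0$ or $1$, I first need to check that $E$ also has projective dimension $\le 1$: this follows from the first short exact sequence, because $B$ is projective-dimension-$\le 1$ and a submodule-type argument (or the long exact $\mathrm{Ext}$ sequence) forces $\mathrm{pd}(E)\le\max(\mathrm{pd}(B),\mathrm{pd}(A)-1)$ once we know $E$ has \emph{some} finite projective dimension; alternatively, and more cleanly, from $0\to E\to C\to D\to 0$ with $\mathrm{pd}(C)\le 1$ we get that $E$ is a second syzygy of $D$ shifted, hence $\mathrm{pd}(E)\le 1$ as well (one must be a little careful here — I would argue directly that a short exact sequence $0\to E\to C\to D\to 0$ with $C$ of projective dimension $\le 1$ and $D$ finite forces $E$ to be a first syzygy module, hence of projective dimension $\le 1$ since $\mathrm{pd}(C)\le 1$).

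With projective dimensions under control, the key input is the behavior of Fitting ideals in short exact sequences of finite modules over $\Z_\ell[G]$. For $0\to X\to Y\to Z\to 0$ with all terms finite and $\mathrm{pd}(Y)\le 1$, one has $\mathrm{Fit}(Y)=\mathrm{Fit}(X)\cdot\mathrm{Fit}(Z)$ when additionally $\mathrm{pd}(X)\le 1$ or $\mathrm{pd}(Z)\le 1$; this is the standard multiplicativity statement (and is how Proposition~\ref{projective dimension argument} and the surrounding machinery are typically deployed — the point of the $\mathrm{pd}\le 1$ hypothesis is precisely that Fitting ideals of such modules are invertible fractional ideals and behave multiplicatively). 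Applying this to the second sequence $0\to E\to C\to D\to 0$, using $\mathrm{pd}(C)\le 1$ and $\mathrm{pd}(E)\le 1$, gives
\[
\mathrm{Fit}_{\Z_\ell[G]}(C)=\mathrm{Fit}_{\Z_\ell[G]}(E)\cdot\mathrm{Fit}_{\Z_\ell[G]}(D).
\]
For the first sequence $0\to A\to B\to E\to 0$, I cannot apply multiplicativity directly because $A$ is not assumed to have projective dimension $\le 1$ — and indeed this is where $A^\wedge$ enters. I would dualize: applying $(-)^\wedge$ is exact on finite modules, so $0\to E^\wedge\to B^\wedge\to A^\wedge\to 0$ is exact, and now $\mathrm{pd}_{\Z_\ell[G]}(B^\wedge)\le 1$ (duality preserves projective dimension $\le 1$, compare Proposition~\ref{projective dimension argument}) and $\mathrm{pd}_{\Z_\ell[G]}(E^\wedge)\le 1$. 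Multiplicativity then yields $\mathrm{Fit}(B^\wedge)=\mathrm{Fit}(E^\wedge)\cdot\mathrm{Fit}(A^\wedge)$. Using that the involution $\iota$ interchanges $\mathrm{Fit}(X^\wedge)$-type ideals appropriately (more precisely that $\mathrm{Fit}(X)$ and $\mathrm{Fit}(X^\wedge)$ are related by $\iota$ for $\mathrm{pd}\le 1$ modules, and that $\iota$ is a ring automorphism preserving products), I can rewrite this as $\mathrm{Fit}(B)=\mathrm{Fit}(E)\cdot\mathrm{Fit}(A^\wedge)$ after applying $\iota$ throughout; combining with the $C$-equation gives
\[
\mathrm{Fit}(A^\wedge)\cdot\mathrm{Fit}(C)=\mathrm{Fit}(A^\wedge)\cdot\mathrm{Fit}(E)\cdot\mathrm{Fit}(D)=\mathrm{Fit}(B)\cdot\mathrm{Fit}(D),
\]
as desired.

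The main obstacle I anticipate is getting the bookkeeping with the involution $\iota$ exactly right and making sure the multiplicativity-of-Fitting-ideals-in-short-exact-sequences lemma is being applied with all its hypotheses satisfied. The subtle points are: (i) verifying $\mathrm{pd}_{\Z_\ell[G]}(E)\le 1$ rigorously — I would use that $\Z_\ell[G]$ has a reasonable structure (the local direct factors in \eqref{decomposition}) so that finite projective dimension means projective dimension $\le 1$, together with the syzygy argument from $0\to E\to C\to D\to 0$; (ii) the fact that Fitting ideals of finite $\mathrm{pd}\le 1$ modules over $\Z_\ell[G]$ are principal generated by non-zero-divisors (Proposition~\ref{projective dimension argument}), so that cancellation and the multiplicativity are legitimate at the level of honest ideals rather than merely "formal" equalities; and (iii) keeping track of whether one needs $\wedge$ or its $\iota$-twist at each stage — the asymmetry between $A^\wedge$ appearing on the left and $C$, $B$, $D$ appearing untwisted on the right is the signature of exactly one dualization being performed, which matches the single "problematic" term $A$ in the original sequence. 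Once these are nailed down, the proof is a short diagram chase plus two applications of multiplicativity.
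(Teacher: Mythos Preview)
The paper itself does not prove this proposition; it simply refers to Lemma~5 of \cite{Burns-Greither}. Comparing your sketch against what is actually required, there is a genuine gap: the claim that $\mathrm{pd}_{\Z_\ell[G]}(E)\le 1$ is false in general, and with it the multiplicativity step $\mathrm{Fit}(C)=\mathrm{Fit}(E)\cdot\mathrm{Fit}(D)$. Take $G=\Z/\ell\Z$ with generator $\sigma$, set $B=C=\F_\ell[G]$ (projective dimension~$1$ via $0\to\Z_\ell[G]\xrightarrow{\,\ell\,}\Z_\ell[G]\to\F_\ell[G]\to 0$), and let $B\to C$ be multiplication by $\sigma-1$. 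Then $E\cong\F_\ell[G]/(\sigma-1)^{\ell-1}$ has $\widehat H^0(G,E)\ne 0$, so $E$ is not cohomologically trivial and $\mathrm{pd}_{\Z_\ell[G]}(E)=\infty$. For $\ell=3$ one computes $\mathrm{Fit}(E)\cdot\mathrm{Fit}(D)=(3,(\sigma-1)^2)\cdot(3,\sigma-1)=(9,\,3(\sigma-1))\subsetneq(3)=\mathrm{Fit}(C)$, so the intermediate identity you want genuinely fails. Your syzygy heuristic does not rescue this: from $0\to E\to C\to D\to 0$ the long exact $\mathrm{Ext}$ sequence only gives $\mathrm{Ext}^i(E,-)\cong\mathrm{Ext}^{i+1}(D,-)$ for $i\ge 2$, and $D$ typically has infinite projective dimension too. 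The short-exact-sequence multiplicativity you invoke requires the \emph{quotient} term to have $\mathrm{pd}\le 1$ (or equivalently, over $\Z_\ell[G]$, finite projective dimension); having only the middle term of that form is not enough.

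The argument in Burns--Greither avoids the intermediate module $E$ altogether. One chooses length-one free resolutions $0\to R^a\xrightarrow{\alpha}R^a\to B\to 0$ and $0\to R^c\xrightarrow{\gamma}R^c\to C\to 0$ (possible since $B,C$ are finite of $\mathrm{pd}\le 1$), lifts $B\to C$ to a chain map, and works directly with the mapping cone $R^a\to R^{a+c}\to R^c$, a three-term complex of free modules with $H_2=0$, $H_1\cong A$, $H_0\cong D$. The desired identity then comes out of a determinant/minor computation on this cone together with the identification of $A^\wedge$ with a suitable $\mathrm{Ext}^1$ over $\Z_\ell[G]$. The point is that one must treat the perfect complex $[B\to C]$ as a single object rather than truncate it through $E$.
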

\begin{proof}
See Lemma 5 in \cite{Burns-Greither}.
\end{proof}
It is this Proposition which allows us to perform a precise
calculation of the Fitting ideal instead of simply arriving at a
containment as in the unrefined Coates-Sinnott conjecture.

\section{Jacobians, \'{e}tale cohomology groups and $L$-functions}

In this section we introduce the algebraic and analytic objects which appear in the statement of the refined Coates-Sinnott conjecture.  On the algebraic side, we introduce Jacobians of curves and \'{e}tale cohomology groups and present the relevant connections between the two.  On the analytic side, the equivariant $L$-functions, both at the finite and at the infinite level, are introduced.  Again, most of the material in this  section can be found in \cite{GP1}. The notations are those used in the previous section.

Let $Z_0'$ be a smooth projective model for $\mc{K}_0'$ over $\kappa_0$ and $Z_0$ be a smooth projective model for  $\mc{K}_0$ over $\kappa_0$.  That is, $Z_0$ and $Z_0'$ are smooth projective curves over $\kappa_0$ whose fields of rational functions are isomorphic to $\mc{K}_0$ and $\mc{K}_0'$.  Similarly $Z$ and $Z'$ will denote smooth projective models for $\mc{K}$ and $\mc{K}'$ over $\kappa$.

  Let $\kappa_0 := \kappa \cap
\mc{K}_0'$ be the exact field of constants of $\mc{K}'_0$ and let $q:=|\kappa_0|$. Then $\gamma_q$ will denote the
$q$-power Frobenius map, viewed as a distinguished topological
generator of $G(\kappa/\kappa_0)$.

Let $v$ be a prime of $\mc{K}_0'$ which is unramified in
$\mc{K}/\mc{K}_0'$ (equivalently, unramified in
$\mc{K}_0/\mc{K}_0'$). Then $\widetilde{\sigma}_v$ will denote the
Frobenius automorphism corresponding to $v$ inside $\mc{G}$ and
$\mc{G}_v$ will denote the decomposition group for $v$ in
$\mc{K}/\mc{K}_0'$. Similarly, $\sigma_v$ will denote the
corresponding Frobenius automorphism in $G$ and $G_v$ will denote
the decomposition group for $v$ in $\mc{K}_0/\mc{K}_0'$

If $X$ is any subset of closed points on $Z$, then
$\mathrm{Div}(X)$ will denote the set of divisors on $Z$ which are
supported on $X$. That is $$\mathrm{Div}(X) = \bigoplus_{w\in X}
\Z\cdot w.$$ The degree map $\mathrm{deg}:\mathrm{Div}(X) \ra \Z$
is defined as usual by
$$\mathrm{deg}(\sum_w n_w \cdot w) = \sum_w n_w.$$  The group of
divisors of degree $0$ supported on $X$, $\mathrm{Div}^0(X)$, is
defined by the short exact sequence
$$0 \ra \mathrm{Div}^0(X) \ra \mathrm{Div}(X) \xra{\mathrm{deg}} \Z \ra 0.$$
Finally we define the divisor map
$$\mathrm{div}:\mc{K}^\times \ra \mathrm{Div}^0(Z)$$
by the usual formula
$$\mathrm{div}(f) = \sum_{w\in Z} \mathrm{ord}_w(f) \cdot w.$$

\subsection{Generalized Jacobians and \'{e}tale cohomology groups}

Let $J$ be the Jacobian of $Z$.  $J$ is an abelian variety whose group of $\kappa$-rational points can be identified with the group
$$\mathrm{Pic}^0(Z) := \frac{\mathrm{Div}^0(Z)} {\{\mathrm{div}(f) \mid f \in \mc{K}^\times\}}.$$
As we work exclusively with the $\kappa$-rational points, the letter $J$ will be used to mean the group of $\kappa$-rational points of $J$.

Let $T$ be a finite non-empty set of closed points on $Z$. Define the following subgroup of $\mc{K}^\times$:
$$\mc{K}_T^\times = \{f \in \mc{K}^\times \mid f(v) = 1\textrm{ for all }v\in T\}.$$  The generalized Jacobian $J_T$ is a semi-abelian variety whose group of $\kappa$-rational points can be identified with the group
$$\frac{\mathrm{Div} ^0(Z \setminus T)}{\{\mathrm{div}(f) \mid f \in \mc{K}_T^\times\}}.$$
As above, we will write $J_T$ to mean the group of $\kappa$-rational points of $J_T$.

For every closed point $v$ on $Z$, we let $\kappa(v)$ denote its residue field. Obviously the reduction mod $v$ map leads to a field isomorphism $\kappa\simeq\kappa(v)$, for any such $v$. We let
$$\tau_T := \displaystyle (\bigoplus_{v\in T} \kappa(v)^\times)/\kappa^\times,$$
where $\kappa^\times$ sits in the direct sum above in the usual, diagonal way. The group $\tau_T$ is isomorphic to the group of $\kappa$-rational points of a torus defined over $\kappa_0$.

If $A$ is an abelian group and $\ell$ is a prime, then the $\ell$-adic Tate module of $A$ will be denoted by $T_\ell(A)$.

\begin{proposition}\label{ses of jacobians}
There is a short exact sequence of free $\Z_\ell$-modules of finite rank
$$0\ra T_\ell(\tau_T) \ra T_\ell(J_T) \ra T_\ell(J) \ra 0.$$
\end{proposition}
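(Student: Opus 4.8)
The plan is to deduce this from the analogous short exact sequence of abelian groups
$$0 \ra \tau_T \ra J_T \ra J \ra 0$$
coming from the semi-abelian variety structure of $J_T$, and then to apply the Tate module functor $T_\ell(-) = \varprojlim_n (-)[\ell^n]$, extracting exactness of the resulting sequence from divisibility.

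First I would produce the exact sequence of groups, working directly with the presentations above. The inclusion $\mathrm{Div}^0(Z\setminus T)\injects \mathrm{Div}^0(Z)$ induces a homomorphism $J_T\ra J$ on the quotients. It is surjective because every degree-zero divisor class on $Z$ has a representative supported away from the finite set $T$ (moving lemma / weak approximation on the curve $Z$ over the algebraically closed field $\kappa$). Its kernel consists of the classes $[\mathrm{div}(f)]$ in $J_T$ with $f\in\mc{K}^\times$ and $\mathrm{div}(f)$ supported outside $T$; such an $f$ is a unit at every $v\in T$, and $f\mapsto (f(v))_{v\in T}$ identifies this kernel with $\tau_T = (\bigoplus_{v\in T}\kappa(v)^\times)/\kappa^\times$, once one observes that $\mathrm{div}(f)$ is a principal divisor in $J_T$ precisely when $f\in\kappa^\times\cdot\mc{K}_T^\times$ and that every tuple of values in $\bigoplus_{v\in T}\kappa(v)^\times$ is attained (again by approximation on $Z$). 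Equivalently, this is the standard short exact sequence attached to the generalized Jacobian of $Z$ relative to the modulus $\sum_{v\in T} v$, obtained by taking $\kappa$-points of the corresponding exact sequence of smooth commutative group schemes over $\kappa$; see \cite{GP1} and the references therein.

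Next I would pass to Tate modules. Since $\tau_T$, $J_T$ and $J$ are the groups of $\kappa$-rational points of semi-abelian varieties over the algebraically closed field $\kappa$, each is divisible; in particular $\tau_T/\ell^n\tau_T = 0$ for every $n$. The snake lemma applied to multiplication by $\ell^n$ on the sequence above therefore gives, for each $n$, a short exact sequence
$$0 \ra \tau_T[\ell^n] \ra J_T[\ell^n] \ra J[\ell^n] \ra 0.$$
In each of these inverse systems the transition maps are multiplication by $\ell$, which are surjective by divisibility, so the systems satisfy the Mittag--Leffler condition, the relevant $\varprojlim^1$ vanishes, and $\varprojlim_n$ preserves exactness; taking the limit yields $0\ra T_\ell(\tau_T)\ra T_\ell(J_T)\ra T_\ell(J)\ra 0$. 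Finally, each term is finitely generated and $\Z_\ell$-torsion-free, hence free of finite rank over the discrete valuation ring $\Z_\ell$ (the rank of $T_\ell(J)$ is at most $2g$ and that of $T_\ell(\tau_T)$ at most $|T|-1$, with equality when $\ell\neq p$, while $T_\ell(\tau_T)=0$ when $\ell=p$); alternatively, $T_\ell(J_T)$ is free because it is an extension of free $\Z_\ell$-modules.

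The only genuinely delicate point is the interplay between exactness and the inverse limit, which the Mittag--Leffler argument above disposes of via divisibility; the surjectivity of $J_T\ra J$ and the identification of its kernel with the torus $\tau_T$ are the standard structure theory of generalized Jacobians and present no real obstacle.
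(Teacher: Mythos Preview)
Your argument is correct and follows the same underlying strategy as the paper: start from the short exact sequence $0\to\tau_T\to J_T\to J\to 0$ of groups and pass to Tate modules. The paper simply cites \cite{GP1}, Remark~2.2, for the existence of the exact sequence of Tate modules, and then verifies freeness by showing that $\tau_T\otimes\Z_\ell$ and $J\otimes\Z_\ell$ are each of the form $(\Q_\ell/\Z_\ell)^\alpha$ (so their Tate modules are $\Z_\ell^\alpha$); your version unpacks the same content, giving the divisibility/Mittag--Leffler argument explicitly and obtaining freeness from torsion-freeness over the DVR $\Z_\ell$.
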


\begin{proof}
Remark 2.2 in \cite{GP1} shows that there is such an exact sequence so it will suffice to prove that each of these modules is free of finite rank over $\Z_\ell$.  For this it will suffice to prove that there are $\alpha, \beta\in \Z$ such that $\tau_T\tensor \Z_\ell \isom (\Q_\ell/\Z_\ell)^\alpha$ and $J\tensor \Z_\ell\isom (\Q_\ell/\Z_\ell)^\beta$.

First, from the definition of $\tau_T$, it is clear that $\tau_T\tensor \Z_\ell\simeq(\Q_\ell/\Z_\ell)^{(|T|-1)}$ if $\ell\neq p$ and $\tau_T\tensor \Z_p$ is trivial.  Next, it follows from Remark 3.3 in \cite{GP1} that $J\tensor \Z_\ell \isom (\Q_\ell/\Z_\ell)^{2g_Z}$ if $\ell\neq p$ and that $J\tensor \Z_p \isom (\Q_p/\Z_p)^\beta$ for a certain $\beta< g_Z$.  Here $g_Z$ denotes the genus of $Z$. \end{proof}

The following theorem relates the generalized Jacobians associated to $Z$ to the \'{e}tale cohomology groups that we want to study.

\begin{proposition}[Lemma 5.11 and Remark 5.15 in \cite{GP1}]\label{EtaleandJacobian}
Let $S_0$ be a finite set of primes of $\mc{K}_0'$ and let $S$ be the set of primes of $\mc{K}$ lying over $S_0$.  There are isomorphisms of $\Z_\ell[G]$-modules
\begin{enumerate}
\item $H^2_{\acute{e}t}(\mc{O}_{\mc{K}_0, S_0}, \Z_\ell(n)) \xra{\sim} (T_\ell(J_S)(-n)_\Gamma)^\vee$
\item $H^1_{\acute{e}t}(\mc{O}_{\mc{K}_0, S_0}, \Z_\ell(n)) \xra{\sim} (\Q_\ell/\Z_\ell)(n)^{\Gamma} \isom (\Z_\ell(-n)_\Gamma)^\vee$
\end{enumerate}
where $\mc{O}_{\mc{K}_0, S_0}$ is the subring of $\mc{K}_0$ consisting of elements regular away from primes above $S_0$.
\end{proposition}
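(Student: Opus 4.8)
The plan is to compute these \'etale cohomology groups over the geometric base $\kappa=\overline{\F}_p$ and then descend. I would put $U_0=\Spec\mc{O}_{\mc{K}_0,S_0}$, an affine curve over $\kappa_0$, and let $U$ be its normalization in $\mc{K}=\mc{K}_0\kappa$, so that $U\to U_0$ is a pro-(finite \'etale) Galois cover with group $\Gamma=G(\mc{K}/\mc{K}_0)\simeq\widehat\Z$, topologically generated by $\gamma$. Working throughout with $\ell\neq p$, the Hochschild--Serre spectral sequence reads $H^p(\Gamma,H^q_{\acute{e}t}(U,\Z_\ell(n)))\Rightarrow H^{p+q}_{\acute{e}t}(U_0,\Z_\ell(n))$. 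Since $\mathrm{cd}_\ell(\Gamma)=1$ and, by Artin's theorem, $\mathrm{cd}_\ell(U)\leq 1$ ($U$ being affine of dimension $1$ over the algebraically closed field $\kappa$), this collapses to an exact sequence
$$0\to H^1(\Gamma,H^0_{\acute{e}t}(U,\Z_\ell(n)))\to H^1_{\acute{e}t}(U_0,\Z_\ell(n))\to H^1_{\acute{e}t}(U,\Z_\ell(n))^\Gamma\to 0$$
together with an isomorphism $H^2_{\acute{e}t}(U_0,\Z_\ell(n))\simeq H^1_{\acute{e}t}(U,\Z_\ell(n))_\Gamma$. As $U$ is geometrically connected, $H^0_{\acute{e}t}(U,\Z_\ell(n))=\Z_\ell(n)$, and for the procyclic group $\Gamma$ one has $H^1(\Gamma,\Z_\ell(n))=\Z_\ell(n)_\Gamma$; hence the left-hand term of the sequence is $\Z_\ell(n)_\Gamma$.

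The second step is to identify $H^1_{\acute{e}t}(U,\Z_\ell(1))$ with $T_\ell(J_S)^*(1)$. Writing $U=Z\setminus S$ inside the smooth projective model $Z/\kappa$, I would combine the localization long exact sequence, absolute purity (giving $H^i_S(Z,\Z_\ell(1))=\bigoplus_{v\in S}H^{i-2}(\Spec\kappa,\Z_\ell)$), the Kummer isomorphism $H^1_{\acute{e}t}(Z,\Z_\ell(1))\simeq T_\ell(J)$, the trace isomorphism $H^2_{\acute{e}t}(Z,\Z_\ell(1))\simeq\Z_\ell$ and the vanishing $H^2_{\acute{e}t}(U,\Z_\ell(1))=0$ to produce a short exact sequence
$$0\to T_\ell(J)\to H^1_{\acute{e}t}(U,\Z_\ell(1))\to\mathrm{Div}^0(S)\otimes\Z_\ell\to 0,$$
the right-hand map being the degree-zero part of the Gysin maps at the points of $S$. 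On the other hand, dualizing the sequence of Proposition~\ref{ses of jacobians}, using the autoduality $T_\ell(J)^*\simeq T_\ell(J)(-1)$ afforded by the canonical principal polarization of $J$ together with the elementary identification $T_\ell(\tau_S)^*\simeq\mathrm{Div}^0(S)\otimes\Z_\ell(-1)$, and then twisting by $1$, one gets $0\to T_\ell(J)\to T_\ell(J_S)^*(1)\to\mathrm{Div}^0(S)\otimes\Z_\ell\to 0$. The hard part is to match these two extensions: a priori they are merely two extensions of the same pair of $\Z_\ell[[\mc{G}]]$-modules, and since $\mathrm{Ext}^1(\mathrm{Div}^0(S)\otimes\Z_\ell,T_\ell(J))$ need not vanish this is not formal — it is precisely here that one invokes the identification of $H^1_{\acute{e}t}(U,\Z_\ell(1))$ with the $\ell$-adic realization of the Picard $1$-motive attached to $(Z,S)$ and the Galois-equivariant structure theory of such motives developed in \cite{GP1}. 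Granting the match, $H^1_{\acute{e}t}(U,\Z_\ell(1))\simeq T_\ell(J_S)^*(1)$; tensoring with $\Z_\ell(n-1)$ and applying Remark~\ref{twists of duals} then gives $H^1_{\acute{e}t}(U,\Z_\ell(n))\simeq T_\ell(J_S)^*(n)\simeq\bigl(T_\ell(J_S)(-n)\bigr)^*$.

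Both parts then follow formally. I would set $M=T_\ell(J_S)(-n)$; by Proposition~\ref{ses of jacobians} it is finitely generated and $\Z_\ell$-free, and by Remark~\ref{eigenvalues of twists} together with the Weil bounds on $T_\ell(J)$ and the fact that $\gamma$ acts on the toric part $T_\ell(\tau_S)$ with eigenvalues of absolute value $>1$ (times roots of unity), the eigenvalues of $\gamma$ on $M\otimes\Q_\ell$ all have absolute value $<1$ for $n\geq 2$; in particular they are $\neq 1$, so $M_\Gamma$ is finite. For part $(1)$, Lemma~\ref{duality} applied to $M$ yields
$$H^2_{\acute{e}t}(U_0,\Z_\ell(n))\simeq H^1_{\acute{e}t}(U,\Z_\ell(n))_\Gamma=(M^*)_\Gamma\simeq(M_\Gamma)^\vee=\bigl(T_\ell(J_S)(-n)_\Gamma\bigr)^\vee.$$
For part $(2)$, dually the eigenvalues of $\gamma$ on $H^1_{\acute{e}t}(U,\Z_\ell(n))\otimes\Q_\ell=M^*\otimes\Q_\ell$ have absolute value $>1$ for $n\geq 2$, hence $H^1_{\acute{e}t}(U,\Z_\ell(n))^\Gamma=0$ (since $M^*$ is $\Z_\ell$-free), and the exact sequence of the first paragraph collapses to $H^1_{\acute{e}t}(U_0,\Z_\ell(n))\simeq\Z_\ell(n)_\Gamma$. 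Finally $\Z_\ell(n)^*\simeq\Z_\ell(-n)$ and Lemma~\ref{duality} give $\Z_\ell(n)_\Gamma\simeq\bigl(\Z_\ell(-n)_\Gamma\bigr)^\vee$, while $(\Q_\ell/\Z_\ell)(n)\simeq\Z_\ell(-n)^\vee$ and the identity $(N^\vee)^\Gamma\simeq(N_\Gamma)^\vee$ give $(\Q_\ell/\Z_\ell)(n)^\Gamma\simeq\bigl(\Z_\ell(-n)_\Gamma\bigr)^\vee$, which is part $(2)$. The only genuinely nonformal input is the extension-matching of the second paragraph; everything else is the Hochschild--Serre machinery and the Riemann hypothesis for curves used to kill the superfluous $\Gamma$-invariants.
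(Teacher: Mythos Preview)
Your argument is correct. The paper itself does not prove the substantive isomorphisms in (1) and the first isomorphism in (2); it simply cites Lemma 5.11 and Remark 5.15 of \cite{GP1} and then proves only the elementary final identification $(\Q_\ell/\Z_\ell)(n)^\Gamma\simeq(\Z_\ell(-n)_\Gamma)^\vee$ via $(M_\Gamma)^\vee\simeq(M^\vee)^\Gamma$ applied to $M=\Z_\ell(-n)$. What you have written is, in effect, a self-contained sketch of the argument behind those citations: Hochschild--Serre descent along $U\to U_0$, the geometric identification $H^1_{\acute et}(U,\Z_\ell(1))\simeq T_\ell(J_S)^*(1)$ via localization/purity/Kummer together with the dualized sequence of Proposition~\ref{ses of jacobians}, and then the Weil-weight argument to kill the extraneous $\Gamma$-invariants. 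You are also right to flag the extension-matching step as the one place where genuine input from \cite{GP1} on Picard $1$--motives is needed. For the last isomorphism in (2) you reach $(\Z_\ell(-n)_\Gamma)^\vee$ from the other side, via $H^1\simeq\Z_\ell(n)_\Gamma$ and Lemma~\ref{duality}; this is the same chain of identities as the paper's, read backwards. So your route is not really different from the paper's --- it simply unpacks what the paper outsources to \cite{GP1} --- but it has the advantage of making the structural reason for the isomorphisms visible within the present paper rather than hidden behind a citation.
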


\begin{proof}
The cited references prove everything but the last isomorphism in $(2)$.  It is easy to see that if $M$ is a $\Z_\ell[[\mc{G}]]$-module, then $(M_\Gamma)^\vee\isom (M^\vee)^\Gamma$.  The relation $\Z_\ell^\vee \isom \Q_\ell/\Z_\ell$ is clear and Remark \ref{twists of duals} implies that $\Z_\ell(-n)^\vee \isom (\Z_\ell^\vee)(n)$.  Putting all this together we therefore have $\Z_\ell[G]$-module isomorphisms
$$(\Z_\ell(-n)_\Gamma)^\vee \isom (\Z_\ell(-n)^\vee)^\Gamma \isom (\Z_\ell^\vee(n))^\Gamma \isom (\Q_\ell/\Z_\ell(n))^\Gamma.$$
\end{proof}

\subsection{The $L$-functions}\label{L-functions}

As in previous sections, let $\mc{K}_0/\mc{K}_0'$ be a Galois extension of characteristic $p$ global fields with Galois group $G$ and let $S_0$ be a finite non-empty set of primes of $\mc{K}_0'$ containing all those primes which ramify in $\mc{K}_0/\mc{K}_0'$.  In addition, let $T_0$ be a finite set of primes of $\mc{K}_0'$ such that $S_0 \cap T_0 \neq \emptyset$.  For each prime $v$ of $\mc{K}_0'$, let $d_v$ denote the residual degree of $v$ over $\kappa_0$. This means that if $\kappa_0(v)$ is the residue field of $v$, then $|\kappa_0(v)| = q^{d_v}$.  To this data we associate the $(S_0, T_0)$ modified equivariant $L$-function, defined by the infinite product
\begin{equation}\label{eulerproduct}\Theta_{\mc{K}_0/\mc{K}_0', S_0, T_0}(u) = \prod_{v\in T_0}(1-\sigma_v^{-1}\cdot (qu)^{d_v}) \cdot \prod_{v\not\in S_0}(1-\sigma_v^{-1}u^{d_v})^{-1},\end{equation}
which is obviously convergent in $Z[G][[u]]$, endowed with the $u$--adic topology.
If $T\neq \emptyset$, then this product converges to an element of $\Z[G][u]$, i.e. it is a polynomial in $u$ with $\Z[G]$--coefficients. (See \S 4.2 of \cite{GP1} and Proposition 2.15 in Chapter 5 of \cite{Tate}.)  To avoid overburdening our notation, we will suppress the extension $\mc{K}_0/\mc{K}_0'$ and simply write $\Theta_{S_0, T_0}(u)$.  If $T_0$ is empty, then we just write $\Theta_{S_0}(u)$.  We note that in the absence of $T_0$, $\Theta_{S_0}(u)$ is not an element of $\Z[G][u]$ in general.

%The relation between $\Theta_{S_0, T_0}(u)$ and the $L$-functions attached to characters of $G$ is given as follows.  For $\chi \in \widehat{G}(\C)$, the $(S_0, T_0)$ modified $L$-function associated to $\chi$ is defined by the convergent Euler product for $s\in \C$, $\Re e(s)>1$
%$$L_{S_0, T_0}(s, \chi) = \prod_{v \in T_0}(1-\chi(\sigma_v)Nv^{1-s})\cdot \prod_{v\not \in S_0} (1-\chi(\sigma_v)Nv^{-s})^{-1}$$ and this function can be analytically continued to all of $\C$.  If we set $$e_\chi = \sum_{g\in G} \chi(g)g^{-1}$$ for each character $\chi$ of $G$, then one can immediately verify that
%\begin{equation}\label{equivariantLfunction}\Theta_{S_0, T_0}(q^{-s}) = \sum_{\chi} L_{S_0, T_0}(s, \chi)e_{\chi^{-1}}.\end{equation}

%If $T_0$ is empty, then this equation relates $\Theta_{S_0}$ to the classical $S_0$-imprimitive $L$-functions which appear in the statement of the Coates-Sinnott conjecture in characteristic $p$.

Let $s$ be a complex variable. We define
$$\delta_{T_0}(q^{-s}) := \prod_{v\in T_0} (1-\sigma_v^{-1} (q^{d_v})^{1-s}).$$
Obviously, we have $\Theta_{S_0, T_0}(q^{-s}) = \delta_{T_0}(q^{-s}) \cdot \Theta_{S_0}(q^{-s})$.  This factorization will turn up in later calculations. We remark that if $T_0$ is non-empty, then $$\Theta_{S_0, T_0}(q^{-s}):\C \ra \C[G]$$
is analytic on all of $\C$.  If $T_0 = \emptyset$, then $\Theta_{S_0}(q^{-s})$ will have a pole at $s=1$.
It is however analytic on $\C\setminus \{1\}$ and, in particular, it can be evaluated at negative integers $s=1-n$ for $n\geq 2$.

Fix a prime $\ell \neq p$.  Now, we introduce an element of $\Z_\ell[[\mc{G}]]$ which is the $\mc{G}$-equivariant version of the special values of interest in Theorem \ref{main theorem}.   Recall that $\gamma_q$ denotes the $q$-power Frobenius, which we will now identify with a topological generator for the group $\Gamma' :=G(\mc{K}'/\mc{K}_0')$.  Since $\Theta_{S_0, T_0}(u)\in \Z_\ell[G][u]$, we can evaluate $\Theta_{S_0, T_0}(u)$ at $\gamma_q^{-1}$ to get an element of $\Z_\ell[G \times \Gamma'] \subseteq \Z_\ell[[G\times \Gamma']]$.   Now, via Galois restriction, we can identify $\mathcal G$ with a subgroup
of $G\times\Gamma'$ in the obvious way. In this way, $\Z_\ell[[\mathcal G]]$ is identified with a subring of $\Z_\ell[[G\times \Gamma']]$.

\begin{proposition}
With the above notations, we have $\Theta_{S_0, T_0}(\gamma_q^{-1}) \in \Z_\ell[[\mc{G}]]$.
\end{proposition}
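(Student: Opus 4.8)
The plan is to show that the element $\Theta_{S_0, T_0}(\gamma_q^{-1})$, which a priori lives in the larger ring $\Z_\ell[[G\times\Gamma']]$, actually lies in the subring $\Z_\ell[[\mathcal G]]$ once we identify $\mathcal G$ with its image under Galois restriction inside $G\times\Gamma'$. The key observation is that, under this identification, $\mathcal G$ is precisely the subgroup of $G\times\Gamma'$ consisting of those pairs $(g, \rho)$ whose restrictions to the common subfield $\mathcal K_0' \kappa = \mathcal K'$... more precisely, the subgroup cut out by the compatibility condition that the restriction of $g\in G = G(\mathcal K_0/\mathcal K_0')$ to $\mathcal K_0 \cap \mathcal K' $ agrees with the restriction of $\rho \in \Gamma' = G(\mathcal K'/\mathcal K_0')$. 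Concretely, $\Z_\ell[[\mathcal G]]$ sits inside $\Z_\ell[[G\times\Gamma']]$ as the subring generated topologically by the elements $\widetilde\sigma_v$, $v$ unramified, together with their closure; what I must verify is that each Euler factor $1 - \sigma_v^{-1} u^{d_v}$, evaluated at $u = \gamma_q^{-1}$, is such an element, and likewise for the $\delta_{T_0}$ factors.

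The main step is therefore the following local identity: for a prime $v$ of $\mathcal K_0'$ unramified in $\mathcal K/\mathcal K_0'$, the pair $(\sigma_v, \gamma_q^{d_v}) \in G \times \Gamma'$ equals the restriction-image of the Frobenius $\widetilde\sigma_v \in \mathcal G$. This is just the statement that the Frobenius at $v$ in the compositum $\mathcal K = \mathcal K_0\kappa$ restricts to $\sigma_v$ on $\mathcal K_0$ and to the $|\kappa_0(v)|$-power Frobenius $= \gamma_q^{d_v}$ on $\kappa$ (since $|\kappa_0(v)| = q^{d_v}$ by definition of $d_v$), which is standard since $\gamma_q$ is the $q$-power Frobenius generator of $G(\kappa/\kappa_0)$ and the residue extension at $v$ over $\kappa_0$ has degree $d_v$. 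Hence $\sigma_v^{-1} u^{d_v}$ evaluated at $u = \gamma_q^{-1}$ becomes $\sigma_v^{-1}\gamma_q^{-d_v} = \widetilde\sigma_v^{-1}$ inside $\Z_\ell[[\mathcal G]] \subseteq \Z_\ell[[G\times\Gamma']]$, so every Euler factor in \eqref{eulerproduct}, and every factor of $\delta_{T_0}$, lies in $\Z_\ell[[\mathcal G]]$.

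Once that is in place I would conclude as follows. Since $T_0 \neq \emptyset$ (indeed $S_0 \cap T_0 \neq \emptyset$ by hypothesis), the discussion following \eqref{eulerproduct} shows $\Theta_{S_0, T_0}(u) \in \Z[G][u] \subseteq \Z_\ell[G][u]$ is an honest polynomial, so $\Theta_{S_0, T_0}(\gamma_q^{-1})$ is a finite $\Z_\ell$-linear combination of finite products of the factors $(1 - \widetilde\sigma_v^{-1})$ (from the $T_0$-part, using $qu = q\gamma_q^{-1}$ and $c_\ell(\gamma_q) = q$, so $(qu)^{d_v}|_{u=\gamma_q^{-1}}$ again unwinds to a power of a Frobenius) and the geometric-series expansions of $(1 - \widetilde\sigma_v^{-1})^{-1}$ truncated appropriately — all of which are elements of $\Z_\ell[[\mathcal G]]$. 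As $\Z_\ell[[\mathcal G]]$ is a closed subring of $\Z_\ell[[G\times\Gamma']]$ (it is the closure of $\Z_\ell[\mathcal G]$ in the profinite topology, and the inclusion $\mathcal G \hookrightarrow G\times\Gamma'$ is a closed embedding of profinite groups), the limit defining $\Theta_{S_0, T_0}(\gamma_q^{-1})$ stays inside it. The part I expect to require the most care is bookkeeping the $qu$ versus $u$ substitutions in the two products and checking that the $q$-factors are absorbed correctly via $c_\ell(\gamma_q) = q$ (Remark \ref{eigenvalues of twists}); the underlying group-theoretic content — that Frobenius in a compositum restricts compatibly — is routine.
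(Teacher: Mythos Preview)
Your core idea is exactly right, and it is essentially the argument that the paper defers to \cite{GP1} (the paper gives no self-contained proof here). The identification $(\sigma_v,\gamma_q^{d_v})=\widetilde\sigma_v$ under the restriction embedding $\mathcal G\hookrightarrow G\times\Gamma'$ is the whole point: every monomial appearing in the Euler expansion has the form $(\text{integer})\cdot\bigl(\prod_j\sigma_{v_j}^{-k_j}\bigr)\cdot u^{\sum_j k_jd_{v_j}}$, and at $u=\gamma_q^{-1}$ this becomes $(\text{integer})\cdot\prod_j\widetilde\sigma_{v_j}^{-k_j}\in\Z[\mathcal G]$.

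Two small points of presentation. First, the appeal to $c_\ell(\gamma_q)=q$ is a red herring: the $q^{d_v}$ coming from the $T_0$-factors is simply an integer scalar, so $(1-\sigma_v^{-1}(qu)^{d_v})|_{u=\gamma_q^{-1}}=1-q^{d_v}\widetilde\sigma_v^{-1}\in\Z[\mathcal G]$ with no twisting needed. Second, the phrase ``geometric-series expansions of $(1-\widetilde\sigma_v^{-1})^{-1}$ truncated appropriately'' is a bit loose, since $(1-\widetilde\sigma_v^{-1})^{-1}$ does not exist in $\Z_\ell[[\mathcal G]]$ and there is no single finite truncation that reproduces the polynomial. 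The clean way to finish is coefficient-wise: for each $i$, the coefficient of $u^i$ in $\Theta_{S_0,T_0}(u)$ agrees with the coefficient of $u^i$ in any partial Euler product containing all $v$ with $d_v\le i$; in that finite product every monomial is compatible in the sense above, hence the coefficient of $u^i$ is supported on $\{g\in G:(g,\gamma_q^{-i})\in\mathcal G\}$. Since $\Theta_{S_0,T_0}(u)$ is a polynomial, summing the finitely many terms $a_i\gamma_q^{-i}$ then visibly lands in $\Z[\mathcal G]\subseteq\Z_\ell[[\mathcal G]]$. (Incidentally, the hypothesis you quote as ``$S_0\cap T_0\neq\emptyset$'' is a typo in the paper for $S_0\cap T_0=\emptyset$; what you actually use, and what matters, is $T_0\neq\emptyset$.)
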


\begin{proof}
See the discussion following Theorem 4.12 in \cite{GP1}.
\end{proof}

 We define $$\vartheta_{S_0, T_0}^{(\infty)} := \Theta_{S_0, T_0}(\gamma_q^{-1}) \in \Z_\ell[[\mc{G}]].$$  The importance of $\vartheta_{S_0, T_0}^{(\infty)}$ is given in the following proposition which shows that its twists know the special values of $\Theta_{S_0, T_0}(q^{-s})$ at negative integers $s=1-n$.

\begin{proposition}\label{Lvalues}
If $\pi:\Z_\ell[[\mc{G}]] \ra \Z_\ell[G]$ is the reduction map, then $$\pi(t_{1-n}(\vartheta_{S_0, T_0}^{(\infty)})) = \Theta_{S_0, T_0}(q^{n-1}).$$
\end{proposition}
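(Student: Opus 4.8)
The plan is to trace through the definitions of the three operators involved — evaluation at $\gamma_q^{-1}$, the Tate twist $t_{1-n}$, and the reduction map $\pi$ — and check that they combine to produce exactly the Euler product defining $\Theta_{S_0,T_0}(q^{n-1})$. First I would recall that $\vartheta_{S_0,T_0}^{(\infty)} = \Theta_{S_0,T_0}(\gamma_q^{-1})$, meaning that we take the polynomial (or power series) $\Theta_{S_0,T_0}(u) \in \Z_\ell[G][[u]]$ and substitute $u = \gamma_q^{-1}$, where $\gamma_q$ is viewed as a topological generator of $\Gamma' = G(\mc{K}'/\mc{K}_0')$; via the identification of $\mc{G}$ with a subgroup of $G\times\Gamma'$ this lands in $\Z_\ell[[\mc{G}]]$. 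The key computational point is to understand how a single Euler factor $(1-\sigma_v^{-1}u^{d_v})^{-1}$ (and similarly each factor $(1-\sigma_v^{-1}(qu)^{d_v})$ in the $T_0$-part) transforms. Under $u\mapsto \gamma_q^{-1}$, this factor becomes $(1-\widetilde\sigma_v^{-1})^{-1}$ essentially, since the Frobenius $\widetilde\sigma_v \in \mc{G}$ projects to $\sigma_v$ in $G$ and to $\gamma_q^{d_v}$ in $\Gamma'$ (because $v$ has residual degree $d_v$ over $\kappa_0$); so $\sigma_v^{-1}\gamma_q^{-d_v}$ is precisely $\widetilde\sigma_v^{-1}$.

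Next I would apply $t_{1-n}$. By definition $t_{1-n}(g) = c_\ell(g)^{1-n} g$ for $g\in\mc{G}$, where $c_\ell$ is the $\ell$-cyclotomic character. Since $\gamma_q$ restricts to the $q$-power Frobenius on $\kappa$, Remark \ref{eigenvalues of twists} (with $\alpha=1$) gives $c_\ell(\gamma_q) = q$, hence $c_\ell(\widetilde\sigma_v) = q^{d_v}$, as $\widetilde\sigma_v$ restricts to $\gamma_q^{d_v}$ on $\kappa$. Therefore $t_{1-n}(\widetilde\sigma_v^{-1}) = q^{-d_v(1-n)}\widetilde\sigma_v^{-1} = q^{d_v(n-1)}\widetilde\sigma_v^{-1}$. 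Applying this termwise to the (convergent) Euler product turns the factor $(1-\widetilde\sigma_v^{-1})^{-1}$ into $(1 - q^{d_v(n-1)}\widetilde\sigma_v^{-1})^{-1} = (1 - \widetilde\sigma_v^{-1} (q^{n-1})^{d_v})^{-1}$, and analogously for the $T_0$-factors one gets $(1-\widetilde\sigma_v^{-1}(q\cdot q^{n-1})^{d_v}) = (1 - \widetilde\sigma_v^{-1}(q^n)^{d_v})$. Comparing with \eqref{eulerproduct}, this is exactly the Euler product for $\Theta_{S_0,T_0}(u)$ evaluated at $u = q^{n-1}$, except the group elements live in $\mc{G}$ rather than $G$. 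Finally, applying $\pi:\Z_\ell[[\mc{G}]]\to\Z_\ell[G]$ sends each $\widetilde\sigma_v$ to $\sigma_v$, and since $\pi$ is a continuous ring homomorphism it respects the convergent product, so $\pi(t_{1-n}(\vartheta_{S_0,T_0}^{(\infty)}))$ is precisely $\prod_{v\in T_0}(1-\sigma_v^{-1}(q^n)^{d_v})\cdot\prod_{v\notin S_0}(1-\sigma_v^{-1}(q^{n-1})^{d_v})^{-1} = \Theta_{S_0,T_0}(q^{n-1})$.

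The main obstacle, such as it is, is bookkeeping rather than conceptual: one must be careful that the substitution $u\mapsto\gamma_q^{-1}$, the automorphism $t_{1-n}$, and the reduction $\pi$ are all continuous for the relevant topologies so that they genuinely commute with the infinite Euler product, and one must correctly match the powers of $q$ coming from the $(qu)^{d_v}$ in the $T_0$-part against the twist. In the case $T_0=\emptyset$ there is the additional subtlety that $\Theta_{S_0}(u)$ is only a power series, not a polynomial, so convergence of the manipulated product in the $\ell$-adic topology of $\Z_\ell[[\mc{G}]]$ must be invoked (this is where the hypothesis $\ell\neq p$, which guarantees the factors $q^{d_v(n-1)}\widetilde\sigma_v^{-1}$ lie in the Jacobson radical modulo the augmentation, enters); alternatively one reduces to the $T_0\neq\emptyset$ case via the factorization $\Theta_{S_0,T_0} = \delta_{T_0}\cdot\Theta_{S_0}$ already recorded above. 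Either way the computation is the one sketched, and I would present it by first doing the single-Euler-factor calculation and then assembling the product.
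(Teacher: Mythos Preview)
The paper does not supply a proof of this proposition; it simply refers the reader to the proof of Theorem~5.20 in \cite{GP1}. Your proposal is therefore being compared against a citation rather than an in-paper argument, and the direct computation you outline is indeed the right idea. The single-Euler-factor identities you record---in particular $\sigma_v^{-1}\gamma_q^{-d_v}=\widetilde\sigma_v^{-1}$ inside $G\times\Gamma'$, $c_\ell(\widetilde\sigma_v)=q^{d_v}$, and $t_{1-n}(\widetilde\sigma_v^{-1})=q^{d_v(n-1)}\widetilde\sigma_v^{-1}$---are all correct.

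There is, however, a genuine gap in the factor-by-factor manipulation. The individual Euler factors $(1-\widetilde\sigma_v^{-1})^{-1}$ for $v\notin S_0$ simply do not exist in $\Z_\ell[[\mc{G}]]$: the element $1-\widetilde\sigma_v^{-1}$ lies in the kernel of the augmentation $\Z_\ell[[\mc{G}]]\to\Z_\ell$ and hence is never a unit. Equivalently, the substitution $u\mapsto\gamma_q^{-1}$ is \emph{not} continuous from the $u$-adic topology on $\Z[G][[u]]$ to $\Z_\ell[[\mc{G}]]$, so it cannot be pushed through the infinite Euler product. This problem is already present for $T_0\neq\emptyset$; the reduction you propose at the end does not resolve it, and your Jacobson-radical remark is not correct either (the augmentation of $1-q^{d_v(n-1)}\widetilde\sigma_v^{-1}$ is $1-q^{d_v(n-1)}$, which need not be an $\ell$-adic unit).

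The fix is to use what the polynomial form actually gives. Write $\Theta_{S_0,T_0}(u)=\sum_{i=0}^N a_i u^i\in\Z[G][u]$, so that $\vartheta_{S_0,T_0}^{(\infty)}=\sum_i a_i\gamma_q^{-i}$ inside $\Z_\ell[[G\times\Gamma']]$. Both $t_{1-n}$ and $\pi$ extend compatibly to $\Z_\ell[[G\times\Gamma']]$: the cyclotomic character factors through the $\Gamma'$-coordinate with $c_\ell(\gamma_q)=q$, and $\pi$ extends to the first-factor projection. Then $t_{1-n}$ fixes each $a_i$ and sends $\gamma_q^{-1}\mapsto q^{n-1}\gamma_q^{-1}$, while $\pi$ fixes $a_i$ and sends $\gamma_q^{-1}\mapsto 1$. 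Hence $\pi\bigl(t_{1-n}(\vartheta_{S_0,T_0}^{(\infty)})\bigr)=\sum_i a_i(q^{n-1})^i=\Theta_{S_0,T_0}(q^{n-1})$. Your Euler-factor heuristic is a fine way to \emph{guess} the answer; the coefficient-wise computation on the polynomial is the clean way to \emph{prove} it.
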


\begin{proof}
See the final equation in the proof of Theorem 5.20 in \cite{GP1}
\end{proof}

\section{Picard $1$-Motives}

The concept of a $1$-motive has provided the foundation for recent success in proving classical conjectures on special values of $L$-functions.  Defined by Deligne in \cite{Deligne}, $1$-motives were used by Deligne and Tate to prove the Brumer-Stark conjecture in function fields, see \cite{Tate}.  This  notion of a 1-motive has been generalized by Greither-Popescu in \cite{GP3} through the introduction of their {\em abstract $1$-motives}.  With this machinery they have successfully proven the imprimitive Brumer-Stark conjecture and the Coates-Sinnott conjecture for number fields under certain hypotheses as well as other conjectures on special values of $L$-functions.

%\subsection{Deligne's 1-Motives}
%Here we introduce the basic language of Deligne's (geometric) $1$-motives before going over, in the next section, to the construction that will be of use to us.  This material can be found in \cite{GP1} or \cite{Tate}.

%\begin{definition}
%Let $\kappa$ be an algebraically closed field.  A $1$-motive over $\kappa$ consists of the following set of data

%\begin{enumerate}
%\item a free $\Z$-module of finite rank $L$,
%\item an abelian variety $A$ and a torus $\tau$, both defined over $\kappa$,
%\item an extension of $A$ by $\tau$ over $\kappa$, denoted $A_\tau$,
%\item a group homomorphism $d:L \ra A_\tau(\kappa)$.
%\end{enumerate}
%To keep notation simple, we will just write $A_\tau, A, \tau$ for the respective $\kappa$-valued points.  Obviously, all of the information going into a $1$-motive is contained in the homomorphism $L\xra{d} A_\tau$ and so we will just write our $1$-motives as $\mc{M} = [L\xra{d} A_\tau]$.
%\end{definition}

%All we will really need from the machinery of $1$-motives produces is their $\ell$-adic realizations.  In particular, for each prime $\ell$, there is a short exact sequence of free $\Z_\ell$-modules of finite rank
%\begin{equation}\label{generalmotiveSES}0 \ra T_\ell(A_\tau) \ra T_\ell(\mc{M}) \ra L \tensor \Z_\ell \ra 0.\end{equation}

\subsection{Galois module structure of Picard $1$-motives}\label{picard}

We keep the setup and notation from the last section.  Let $S$ and $T$ be finite sets of primes of $\mc{K}$ such that $S\cap T = \emptyset$.   The machinery of $1$-motives associates to the data $(\mc{K}, S, T)$ the so-called {\em Picard $1$-motive}
$$\mc{M}_{S, T} = [\mathrm{Div}^0(S) \xra{\delta} J_T]$$
where $\delta$ is the map which sends a divisor to its class in $J_T$.  Note that this makes sense because of the assumption that $S\cap T=\emptyset$.
As in \S2 of \cite{GP1}, for all primes $\ell$ one can define an $\ell$--adic realization ($\ell$--adic Tate module) $T_\ell(\mc{M}_{S, T})$ of $\mc{M}_{S, T}$.
For a given $\ell$, $T_\ell(\mc{M}_{S, T})$ is a free $\Z_\ell$--module of finite rank which sits naturally in an exact sequence  of $\Z_\ell$--modules
\begin{equation}\label{motive jacobian ses}0 \ra T_\ell(J_T) \ra T_\ell(\mc{M}_{S, T}) \ra \mathrm{Div}^0(S)\tensor \Z_\ell\ra 0.\end{equation}
(See loc.cit. for details.)

\begin{remark}
If $S$ and $T$ are $\mc{G}$ invariant, then $T_\ell(\mc{M}_{S, T})$ can be given a natural $\Z_\ell[[\mc{G}]]$-module structure and the maps in the above exact sequence are morphisms of $\Z_\ell[[\mc{G}]]$-modules.
\end{remark}

Greither-Popescu have proven the following theorem on the $\Z_\ell[[\mc{G}]]$-module structure of $T_\ell(\mc{M}_{S, T})$.

\begin{theorem}[Greither-Popescu]\label{GP theorem}
Suppose that $S$ and $T$ are $\mc{G}$ invariant, that $S, T \neq \emptyset$ and that $S$ contains all the primes which ramify in $\mc{K}/\mc{K}'$.  Let $S_0, T_0$ be the sets of primes of $\mc{K}_0'$ which lie below the primes in $S, T$.  In addition, let $H = G(\mc{K}/\mc{K}')$.  Then,
the following hold.
\begin{enumerate}
\item $T_\ell(\mc{M}_{S, T})$ is $\Z_\ell[H]$-projective.
\item $\mathrm{Fit}_{\Z_\ell[[\mc{G}]]}(T_\ell(\mc{M}_{S, T})) = \brkt{\vartheta_{S_0, T_0}^{(\infty)}}$.
\end{enumerate}

\end{theorem}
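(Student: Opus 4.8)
The plan, following the geometric approach of \cite{GP1}, is to extract both assertions from the $\ell$-adic cohomology of the $H$-cover of smooth projective curves $Z \to Z/H$ over the algebraically closed constant field $\kappa$, where $H = G(\mc{K}/\mc{K}')$ is a finite group, together with the fact that in characteristic $p$ the relevant $L$-function is, by the Weil conjectures, literally a characteristic polynomial of Frobenius. Two structural facts are used throughout: first, $\mc{G} = G(\mc{K}/\mc{K}_0')$ embeds via restriction into $G \times G(\kappa/\kappa_0)$, hence is abelian, so $\Z_\ell[[\mc{G}]]$ is a commutative ring in which $\gamma_q$ is a unit and $\mc{G}/H \simeq G(\kappa/\kappa_0) \simeq \widehat{\Z}$ is topologically generated by the image of $\gamma_q$; second, $T_\ell(\mc{M}_{S, T})$ is finitely generated and free as a $\Z_\ell$-module and carries a $\Z_\ell[[\mc{G}]]$-module structure compatible with the filtration recorded in (\ref{motive jacobian ses}) and Proposition \ref{ses of jacobians}.

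For part (1), I would argue along that filtration. The outer terms $T_\ell(\tau_T)$ and $\mathrm{Div}^0(S)\otimes\Z_\ell$ are, after decomposing the $\mc{G}$-sets $S$ and $T$ into $H$-orbits, assembled from induced modules $\Z_\ell[H/H_w]$ modulo a degree map; since $S$ contains every prime ramified in $\mc{K}/\mc{K}'$, the orbits with nontrivial inertia are controlled explicitly (tame inertia being cyclic, and wild inertia -- a $p$-group -- acting trivially on $\ell$-adic realizations because $\ell\neq p$), and a direct check shows these two modules are $\Z_\ell[H]$-projective. It then suffices to prove that $T_\ell(\mc{M}_{S, T})$ itself -- or, equivalently after the above, the Jacobian part $T_\ell(J)$ suitably corrected by $S$ -- is $\Z_\ell[H]$-projective. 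This is a Chevalley--Weil / Nakajima-type statement: $T_\ell(\mc{M}_{S, T})$ identifies, up to a Tate twist, with an \'etale $H^1$ of an open subcurve of $Z$, and because the cover $Z\to Z/H$ is tame from the $\ell$-adic point of view (wild ramification being invisible to $\ell$-adic cohomology) and its tame ramification is absorbed by deleting the points of $S$, this cohomology group is cohomologically trivial over $\Z_\ell[H]$; a finitely generated, $\Z_\ell$-free, cohomologically trivial $\Z_\ell[H]$-module is projective, which gives (1), since an extension of $\Z_\ell[H]$-projectives is projective.

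For part (2), granted (1), the $\Z_\ell[H]$-projectivity of $T_\ell(\mc{M}_{S, T})$ (together with the fact that $\mc{G}/H$ is topologically procyclic, generated by $\gamma_q$) forces $\mathrm{Fit}_{\Z_\ell[[\mc{G}]]}(T_\ell(\mc{M}_{S, T}))$ to be principal, generated by a determinant of the Frobenius action: writing $P_{\mc{M}}(u) \in \Z_\ell[H][u]$ for the reversed characteristic polynomial of $\gamma_q$ acting on $T_\ell(\mc{M}_{S, T})$, one gets $\mathrm{Fit}_{\Z_\ell[[\mc{G}]]}(T_\ell(\mc{M}_{S, T})) = \langle P_{\mc{M}}(\gamma_q^{-1})\rangle$, where the substitution makes sense because the coefficients of $P_{\mc{M}}$ lie in $\Z_\ell[H]\subseteq\Z_\ell[[\mc{G}]]$ and $\gamma_q^{-1}\in\Z_\ell[[\mc{G}]]$ is central. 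It then remains to identify $P_{\mc{M}}(u)$ with the polynomial $\Theta_{S_0, T_0}(u)$ under the identifications of \S\ref{L-functions}, which I would do factor by factor along the filtration: the $T_\ell(\tau_T)$-factor is the characteristic polynomial of Frobenius on a torus, Tate-twisted once, which by Remark \ref{eigenvalues of twists} contributes exactly $\prod_{v\in T_0}(1-\sigma_v^{-1}(qu)^{d_v})$; the $T_\ell(J)$-factor, by the Grothendieck--Lefschetz trace formula applied $G$-equivariantly to the zeta function of $Z_0$ and its twists by characters of $G$ -- equivalently, by Weil's identification of the numerator of the Hasse--Weil zeta function with a characteristic polynomial of Frobenius on $T_\ell(J)$ -- reproduces the ``Jacobian part'' of $\prod_{v\not\in S_0}(1-\sigma_v^{-1}u^{d_v})^{-1}$; and the $\mathrm{Div}^0(S)\otimes\Z_\ell$-factor supplies precisely the Euler factors at the primes of $S_0$ and the degree-zero and pole corrections that turn the product of the three into the polynomial $\Theta_{S_0, T_0}(u)$. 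Evaluating at $u=\gamma_q^{-1}$ yields $\vartheta_{S_0, T_0}^{(\infty)}$, which proves (2).

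The step I expect to be the main obstacle is this last identification, specifically pinning down the contribution of the lattice part $\mathrm{Div}^0(S)\otimes\Z_\ell$: one must reconcile the complete absence of $S_0$-Euler factors in $\Theta_{S_0, T_0}(u)$ with the presence of the subobject $\mathrm{Div}^0(S)\otimes\Z_\ell$, and account for the way the degree-zero condition removes the would-be pole at $u=q^{-1}$, all while carrying out the descent from the geometric picture over $\kappa$ (where $\mc{M}_{S, T}$ lives) to the arithmetic equivariant $L$-function of $\mc{K}_0/\mc{K}_0'$ over $\kappa_0$, i.e. keeping faithful track of how $\gamma_q$ and the full $\Z_\ell[[\mc{G}]]$-structure interact with the Euler product. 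Establishing the cohomological triviality in part (1) is the secondary difficulty, and it is exactly there that the hypothesis $\ell\neq p$ enters; in particular, because that hypothesis handles wild ramification for free and the Weil conjectures make the $L$-function a genuine characteristic polynomial, no analytic continuation, no sign subtleties, and no Iwasawa $\mu$-invariant hypothesis are needed -- which is ultimately why the function field statement is unconditional.
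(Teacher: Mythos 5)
The paper does not prove Theorem \ref{GP theorem}; the proof given is just the citation ``See Corollary 4.13 in \cite{GP1}.'' So you are not being measured against an in-paper argument but against the (rather involved) proof in \cite{GP1}. Your overall plan --- establish $H$-cohomological triviality of $T_\ell(\mc{M}_{S,T})$ (hence $\Z_\ell[H]$-projectivity since the module is $\Z_\ell$-free), then use the fact that $\mc{G}/H$ is procyclic generated by $\gamma_q$ to conclude that the Fitting ideal over $\Z_\ell[[\mc{G}]]$ is generated by $\det_{\Z_\ell[H]}(1-\gamma_q u \mid T_\ell(\mc{M}_{S,T}))|_{u=\gamma_q^{-1}}$, and finally identify that determinant with $\Theta_{S_0,T_0}(u)$ by Weil/Lefschetz --- is indeed the skeleton of the argument in \cite{GP1}; the paper itself confirms the determinant identity later, in the $\ell=p$ section, where it records $\Theta_{S_0,T_0}(u)=\det_{\Z_\ell[G]}(1-\gamma_q u\mid T_\ell(\overline{\mathcal M}))$ as a theorem of Greither--Popescu.

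There is, however, a concrete gap in your treatment of part (1). You claim that after decomposing $S$ and $T$ into $H$-orbits, ``a direct check shows'' that $T_\ell(\tau_T)$ and $\mathrm{Div}^0(S)\otimes\Z_\ell$ are $\Z_\ell[H]$-projective, and you then reduce the statement to the Jacobian piece. This does not hold: since $\kappa\subseteq\mc{K}'$, the group $H$ acts trivially on $\kappa^\times$ and on $\Z_\ell(1)$, so the kernel of the degree map $\mathrm{Div}(S)\otimes\Z_\ell\to\Z_\ell$ and the quotient defining $\tau_T$ (dividing $\bigoplus_{v\in T}\kappa(v)^\times$ by the diagonal $\kappa^\times$) each introduce a copy of the \emph{trivial} $\Z_\ell[H]$-module $\Z_\ell$, which is not $\Z_\ell[H]$-projective unless $\ell\nmid|H|$. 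None of the three graded pieces of your filtration is $\Z_\ell[H]$-projective in general; it is precisely the global interplay between $\mathrm{Div}^0(S)$, $T_\ell(\tau_T)$, and $T_\ell(J)$ that produces the cohomological triviality of the whole module $T_\ell(\mc{M}_{S,T})$, and this cancellation is where the hypothesis that $S$ contains all ramification (and that $T\neq\emptyset$) really earns its keep. Your second, ``\'etale $H^1$ of a tame open cover'' argument is much closer to the truth, but it has to be run on the whole object $T_\ell(\mc{M}_{S,T})$ at once rather than being grafted onto the (false) projectivity of the boundary pieces --- as written, the two halves of your paragraph are in tension, and the sentence ``an extension of $\Z_\ell[H]$-projectives is projective'' is not applicable. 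For part (2), you have correctly located the crux (the determinant identity and the book-keeping around the degree-zero constraint and the geometric-to-arithmetic descent), but you also explicitly flag that you have not carried it out, so that step remains open in your write-up.
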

\begin{proof} See Corollary 4.13 in \cite{GP1}.\end{proof}
The last ingredient needed for the proof of our main theorem is a $\mc{G}$-equivariant duality pairing relating $\mc{M}_{S, T}$ to $\mc{M}_{T, S}$

\begin{proposition}\label{pairing}
Let $\ell\neq p$.  For each $n\in \Z$, there is an isomorphism of $\Z_\ell[[\mc{G}]]$-modules
$$T_\ell(\mc{M}_{S, T})(n-1) \isom T_\ell(\mc{M}_{T, S})(-n)^*$$
\end{proposition}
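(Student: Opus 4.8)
The plan is to construct the duality isomorphism from a Weil-pairing type construction on the $1$-motives $\mc{M}_{S,T}$ and $\mc{M}_{T,S}$, which exchanges the roles of $S$ and $T$ and introduces a Tate twist coming from the Weil pairing on the Jacobian. The $1$-motive $\mc{M}_{S,T} = [\mathrm{Div}^0(S) \to J_T]$ and its ``dual'' $\mc{M}_{T,S} = [\mathrm{Div}^0(T) \to J_S]$ fit into the exact sequences \eqref{motive jacobian ses}, and the torus part $\tau_T$ appearing in Proposition \ref{ses of jacobians} is Cartier-dual to the lattice $\mathrm{Div}^0(T)$ (up to a twist), while the abelian variety $J$ is self-dual via the canonical principal polarization (again up to a twist by $\Z_\ell(1)$). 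So the overall statement should follow by assembling these three dualities—lattice/torus, abelian variety, and the fact that each $\ell$-adic realization is a successive extension of these pieces—into a single perfect $\mc{G}$-equivariant pairing
$$T_\ell(\mc{M}_{S,T}) \times T_\ell(\mc{M}_{T,S}) \longrightarrow \Z_\ell(1),$$
which, after dualizing and Tate-twisting, is exactly the asserted isomorphism $T_\ell(\mc{M}_{S,T})(n-1) \isom T_\ell(\mc{M}_{T,S})(-n)^*$: indeed the pairing gives $T_\ell(\mc{M}_{S,T}) \isom T_\ell(\mc{M}_{T,S})^*(1) = T_\ell(\mc{M}_{T,S})(-1)^*$ by Remark \ref{twists of duals}, and then twisting by $n$ yields the claim since $M^*(n) \isom M(-n)^*$ again by Remark \ref{twists of duals}.

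First I would recall from \cite{Deligne} (or the function-field adaptation in \cite{Tate} and \cite{GP1}) that a $1$-motive $\mc{M} = [L \to G]$, with $L$ a lattice and $G$ a semiabelian variety, has a Cartier dual $\mc{M}^\vee$, and that this duality is compatible with $\ell$-adic realizations in the sense that $T_\ell(\mc{M}^\vee) \isom T_\ell(\mc{M})^*(1)$. So the real content is to identify the Cartier dual of the Picard $1$-motive $\mc{M}_{S,T}$ with $\mc{M}_{T,S}$. This is a standard biduality statement in the theory—the generalized Jacobian $J_T$ (extension of $J$ by the torus $\tau_T$ with character group $\mathrm{Div}^0(T)$) has Cartier dual equal to the extension of $J^\vee = J$ by the torus with character group $\mathrm{Div}^0(S)$, pulled back along the map encoding $\delta$—and it is essentially the geometric class-field-theoretic statement underlying the Deligne–Tate proof of Brumer–Stark. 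I would cite the relevant construction in \S2 of \cite{GP1}, or in \cite{Tate}, rather than redo it.

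The main obstacle, and the step requiring the most care, is the $\mc{G}$-equivariance of the pairing together with tracking the Tate twist correctly. Equivariance for the Galois action of $\mc{G}$ (not just $H = G(\mc{K}/\mc{K}')$) should be automatic from functoriality of the Cartier dual, but one has to be careful that $S$ and $T$ being $\mc{G}$-invariant is exactly what makes $\mathrm{Div}^0(S)$ and $\mathrm{Div}^0(T)$ into $\Z_\ell[[\mc{G}]]$-modules and the whole construction $\mc{G}$-equivariant—this is flagged in the Remark following \eqref{motive jacobian ses}. As for the twist: the Weil pairing on $T_\ell(J)$ lands in $\Z_\ell(1)$, the pairing between $T_\ell(\tau_T)$ and $\mathrm{Div}^0(T)\tensor\Z_\ell$ lands in $\Z_\ell(1)$ (since $T_\ell(\mathbb{G}_m) = \Z_\ell(1)$ and $\tau_T$ is a torus), and the pairing between $T_\ell(\tau_S)$ and $\mathrm{Div}^0(S)\tensor\Z_\ell$ similarly; the hypothesis $\ell \neq p$ is needed precisely so that the torus parts have full-rank $\ell$-adic Tate modules (as used in the proof of Proposition \ref{ses of jacobians}) and the pairings are perfect. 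Assembling the three graded pieces into one perfect pairing on the extensions \eqref{motive jacobian ses}—checking that the pairing respects the filtrations and is perfect on the total space, not merely on the associated graded—is the one place where a short diagram-chase or snake-lemma argument (in the spirit of Lemma \ref{snake}) is genuinely required.
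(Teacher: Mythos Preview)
Your approach is essentially the same as the paper's: both rest on the existence of a $\mc{G}$-equivariant perfect pairing $T_\ell(\mc{M}_{S,T}) \times T_\ell(\mc{M}_{T,S}) \to \Z_\ell(1)$, deduce $T_\ell(\mc{M}_{S,T}) \isom T_\ell(\mc{M}_{T,S})^*(1)$, and then twist (by $n-1$, not $n$ as you wrote) using Remark~\ref{twists of duals}. The only difference is that the paper simply cites the pairing from the proof of Theorem~5.20 in \cite{GP1} rather than sketching its construction via Cartier duality of $1$-motives as you do.
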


\begin{proof}
There is a $\Z_\ell[[\mc{G}]]$-equivariant perfect pairing
$$T_\ell(\mc{M}_{S, T}) \times T_\ell(\mc{M}_{T, S}) \ra \Z_\ell(1).$$
(See the proof of Theorem 5.20 in \cite{GP1}.)
This implies that $$T_\ell(\mc{M}_{S, T}) \isom \mathrm{Hom}_{\Z_\ell}(T_\ell(\mc{M}_{T, S}), \Z_\ell(1)) = T_\ell(\mc{M}_{T, S})^*(1).$$
Tensoring with $\Z_\ell(n-1)$, then gives an isomorphism
$$T_\ell(\mc{M}_{S, T})(n-1) \isom T_\ell(\mc{M}_{T, S})^*(n)$$
and applying Remark \ref{twists of duals} finishes the proof.
\end{proof}

\section{Proof of Theorem \ref{main theorem}}

\subsection{Part $(1)$,  $\ell\neq p$}.
We keep the notation of the previous sections but  recall some of it here for the convenience of the reader.  Let $\ell$ be a prime different from $p$ and let $n\in\Z$ with $n\geq 2$.  Let $S_0, T_0$ be two finite sets of primes of $\mc{K}_0'$ such that $S_0$ contains the primes which ramify in $\mc{K}_0/\mc{K}_0'$, $S_0\cap T_0 = \emptyset$ and $S_0, T_0 \neq \emptyset$.  Let $S, T$ denote the primes of $\mc{K}$ lying over the primes in $S_0, T_0$.

We begin by writing exact sequence \eqref{motive jacobian ses}, with $S$ and $T$ swapped:
$$ 0 \ra T_\ell(J_S) \ra T_\ell(\mc{M}_{T, S}) \ra \mathrm{Div}^0(T)\tensor \Z_\ell \ra 0.$$
Now, the definition of $\mathrm{Div}^0(T)$  combined with the exact twisting functor leads to an exact sequence of $\Z_\ell[[\mathcal G]]$--modules
\begin{equation}\label{four term sequence} 0 \ra T_\ell(J_S)(-n) \ra T_\ell(\mc{M}_{T, S})(-n) \xra{\vp} \mathrm{Div}(T)\tensor \Z_\ell(-n) \ra \Z_\ell(-n) \ra 0.\end{equation}

\begin{proposition}
Upon taking $\Gamma$ coinvariants, sequence \eqref{four term sequence} induces an exact sequence of $\Z_\ell[G]$-modules
\begin{equation}\label{key sequence}0 \ra T_\ell(J_S)(-n)_\Gamma \ra T_\ell(\mc{M}_{T, S})(-n)_\Gamma \ra (\mathrm{Div}(T)\tensor \Z_\ell(-n))_\Gamma \ra Z_\ell(-n)_\Gamma \ra 0.\end{equation}  Furthermore, all of these modules are finite.
\end{proposition}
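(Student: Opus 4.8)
The plan is to apply the snake-lemma machinery of Lemma \ref{snake}, but since the starting point is a four-term exact sequence rather than a short exact one, I would first break \eqref{four term sequence} into two short exact sequences by inserting the image of $\vp$. Write $Q := \mathrm{im}(\vp) = \ker(\mathrm{Div}(T)\tensor\Z_\ell(-n) \ra \Z_\ell(-n))$. This gives short exact sequences of $\Z_\ell[[\mc{G}]]$-modules
$$0 \ra T_\ell(J_S)(-n) \ra T_\ell(\mc{M}_{T,S})(-n) \ra Q \ra 0$$
and
$$0 \ra Q \ra \mathrm{Div}(T)\tensor\Z_\ell(-n) \ra \Z_\ell(-n) \ra 0.$$
Applying Lemma \ref{snake} to each gives a six-term exact sequence of invariants and coinvariants in each case; splicing the two together along the $Q$-terms and then truncating appropriately will produce \eqref{key sequence}, provided the relevant invariants ($\Gamma$-fixed parts) vanish. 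So the real content is an exactness-at-the-ends argument plus a finiteness argument.

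The key step is therefore to show that the $\Gamma$-invariants of all the modules appearing in \eqref{four term sequence} vanish, equivalently (by Lemma \ref{finiteness of invariants}, since all these modules are finitely generated and $\Z_\ell$-free) that their $\Gamma$-coinvariants are finite, equivalently that $\gamma - 1$ acts invertibly on each after tensoring with $\Q_\ell$. For this I would pass to $\Q_\ell$-coefficients and compute eigenvalues of the Frobenius generator $\gamma$ using Remark \ref{eigenvalues of twists}: on $T_\ell(J_S)\tensor\Q_\ell$ the geometric Frobenius has eigenvalues of absolute value $q^{1/2}$ (Weil), so on the $(-n)$-twist the eigenvalues have absolute value $q^{1/2 - n} \neq 1$ for $n \geq 1$; on $\mathrm{Div}(T)\tensor\Q_\ell$ Frobenius acts by roots of unity, so on the $(-n)$-twist the eigenvalues have absolute value $q^{-n} \neq 1$; and on $\Z_\ell(-n)\tensor\Q_\ell$ the eigenvalue is $q^{-n} \neq 1$. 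Hence $1-\gamma$ is invertible on all three after $\tensor\Q_\ell$, so all invariants vanish and all coinvariants are finite. The same bound applied to $T_\ell(\mc{M}_{T,S})(-n)$ via \eqref{motive jacobian ses} (an extension of $\mathrm{Div}^0(T)\tensor\Z_\ell(-n)$ by $T_\ell(J_S)(-n)$) shows its coinvariants are finite as well; alternatively this is immediate from finiteness of the other three and the exact sequence.

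With all $\Gamma$-invariants zero, Lemma \ref{snake} applied to the first short exact sequence collapses to $0 \ra T_\ell(J_S)(-n)_\Gamma \ra T_\ell(\mc{M}_{T,S})(-n)_\Gamma \ra Q_\Gamma \ra 0$, and applied to the second collapses to $0 \ra Q_\Gamma \ra (\mathrm{Div}(T)\tensor\Z_\ell(-n))_\Gamma \ra \Z_\ell(-n)_\Gamma \ra 0$. Splicing along $Q_\Gamma$ yields \eqref{key sequence}, and finiteness of every term has already been established. The main obstacle is simply the bookkeeping of combining two six-term sequences correctly and being careful that the connecting maps from $C^\Gamma$ into $A_\Gamma$ are zero because the $C^\Gamma$'s vanish; once the eigenvalue computation is in hand, everything else is formal.
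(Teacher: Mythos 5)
Your proof is correct and follows the same basic route as the paper: split \eqref{four term sequence} into two short exact sequences around $\ker\vp=\mathrm{im}\,\vp$, apply Lemma \ref{snake}, and kill the offending $\Gamma$-invariant terms by showing the coinvariants of each $\Z_\ell$-free, finitely generated module are finite (so the invariants, being finite submodules of free modules, vanish). The one mild divergence is in how finiteness of the coinvariants of $\mathrm{Div}(T)\tensor\Z_\ell(-n)$ and $\Z_\ell(-n)$ is established. You argue uniformly via eigenvalues of $\gamma$ on $(\cdot)\tensor\Q_\ell$, noting that $\gamma$ acts on $\mathrm{Div}(T)\tensor\Q_\ell$ as a permutation matrix (eigenvalues roots of unity) and that twisting rescales absolute values away from $1$; the paper instead computes these coinvariants explicitly as cyclic quotients $\Z_\ell/\brkt{1-c_\ell(\gamma)^n}$ and $\bigoplus_{v\in T_0}\Z_\ell[G]/\brkt{1-q^{n d_v}\sigma_v}$, checking that the defining elements are non-zero-divisors via characters. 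Your eigenvalue argument is leaner for the purposes of this Proposition alone, but the paper's explicit group-ring presentations are reused verbatim in the Fitting-ideal calculation that follows, which is why they are derived here. Two small bookkeeping slips: the absolute values should carry the exponent $\alpha$ (where $\gamma=\gamma_q^\alpha$ topologically generates $\Gamma$), i.e.\ $(q^\alpha)^{1/2-n}$ and $(q^\alpha)^{-n}$ rather than $q^{1/2-n}$ and $q^{-n}$; this does not affect the conclusion, since the only point needed is that these are $\ne 1$.
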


\begin{proof}
We can break \eqref{four term sequence} into the two short exact sequences
$$0 \ra T_\ell(J_S)(-n) \ra T_\ell(\mc{M}_{T, S})(-n) \ra \ker \vp \ra 0$$
and $$0 \ra \ker \vp \ra \mathrm{Div}(T)\tensor \Z_\ell(-n) \ra \Z_\ell(-n) \ra 0.$$
To check the exactness claimed in the proposition, it will suffice to check that each of these sequences stays exact when we take coinvariants.  Lemma \ref{snake} implies that there are exact sequences
$$(\ker \vp)^\Gamma \ra T_\ell(J_S)(-n)_\Gamma \ra T_\ell(\mc{M}_{T, S})(-n)_\Gamma \ra (\ker \vp)_\Gamma \ra 0$$
and
$$Z_\ell(-n)^\Gamma \ra (\ker \vp)_\Gamma \ra (\mathrm{Div}(T) \tensor Z_\ell(-n))_\Gamma \ra \Z_\ell(-n)_\Gamma \ra 0.$$
So, we must show that $(\ker\vp)^\Gamma = 0$ and $(\Z_\ell(-n))^\Gamma =0$.  As a free $\Z_\ell$-module has no non-zero finite submodules, it will suffice to show that $(\ker\vp)^\Gamma$ and  $(\Z_\ell(-n))^\Gamma$ are both finite.  By Lemma \ref{finiteness of invariants} this is equivalent to showing that $(\ker\vp)_\Gamma$ and $\Z_\ell(-n)_\Gamma$ are both finite and this is what we will prove.

Let $\alpha \in \N$, such that $\kappa\cap\mathcal K_0=\Bbb F_{q^\alpha}$. Then $\gamma :=\gamma_q^\alpha$ is a topological generator for $\Gamma$. (Note that $\Gamma$ can be identified via Galois restriction with an open subgroup of $\Gamma'$.)  There is an obvious isomorphism of $\Z_\ell[[\Gamma]]$-modules $\Z_\ell(-n) \isom \Z_\ell[[\Gamma]]/\brkt{1-c_\ell(\gamma)^n \gamma}$.   Taking $\Gamma$ coinvariants we have an isomorphism
$$\Z_\ell(-n)_\Gamma \isom \Z_\ell[[\Gamma]]/\brkt{1-c_\ell(\gamma)^n\gamma, 1-\gamma} \isom \Z_\ell/\brkt{1-c_\ell(\gamma)^n}.$$
Since $c_\ell(\gamma)^n \neq 1$ for any $n\geq 1$, this is a quotient of two free $\Z_\ell$-modules of rank $1$ and therefore it is finite.

Now, since $(\ker\vp)_\Gamma \subseteq (\mathrm{Div}(T)\tensor \Z_\ell(-n))_\Gamma$, it will suffice to show that the latter group is finite.  We can write
\begin{equation}\label{divT}\mathrm{Div}(T)\tensor \Z_\ell(-n)\simeq \bigoplus_{v \in T_0}(\bigoplus_{w|v} \Z_\ell\cdot w)(-n)
\end{equation}
where for each $v\in T_0$ $w$ runs through the set of all primes $w$ in $T$ sitting over $v$. Note that each inner sum is a $\Z_\ell[[\Gamma]]$-module.
We will prove that $(\bigoplus_{w|v} \Z_\ell \cdot w)(-n)_\Gamma$ is finite, for all $v\in T_0$.

Fix a prime $v\in T_0$ and let $w_0$ be a choice of a prime of $\mc{K}$ lying over $v$.  We know that $\mc{G}$ acts transitively on the primes of $\mc{K}$  lying above $v$ and that $\mc{G}_v$ is the stabilizer of $w_0$ under this action.  Consequently, we have isomorphisms of $\Z_\ell[[\mathcal G]]$--modules
$$(\bigoplus_{w|v} \Z_\ell \cdot w)(-n)  \simeq (\Z_\ell[[\mathcal G]]/\brkt{1-\widetilde\sigma_v})(-n)\simeq \Z_\ell[[\mc{G}]]/\brkt{1-c_\ell(\widetilde{\sigma}_v)^n \widetilde{\sigma}_v}.$$
For the second isomorphism, combine Proposition \ref{properties of fitting ideals} part (2) and Proposition \ref{linear algebra}. It follows that we have an isomorphism of $\Z_\ell[G]$--modules
\begin{equation}\label{coinv-divT}
(\bigoplus_{w|v} \Z_\ell\cdot w)(-n)_\Gamma \isom \Z_\ell[G]/\brkt{1-c_\ell(\widetilde{\sigma}_v)^n\sigma_v}.
\end{equation}
Now, we obviously have $c_\ell(\widetilde{\sigma}_v)=q^{d_v}$. Consequently, since $n\ne 0$, the element $(1-c_\ell(\widetilde{\sigma}_v)^n \sigma_v)=(1-q^{n\cdot d_v}\cdot\sigma_v)$ is not a zero-divisor in $\Z_\ell[G]$. In order to see this, simply note that for all $\overline {\Bbb Q}_\ell$--valued characters $\chi$ of $G$, we have
$$\chi(1-q^{n\cdot d_v}\cdot\sigma_v)= 1-q^{n\cdot d_v}\cdot\chi(\sigma_v)\ne 0$$
and recall that the zero-divisors of $\Z_\ell[G]$ are those elements which lie in the kernel of at least one character $\chi$. In fact, these kernels are precisely the minimal prime
ideals of $\Z_\ell[G]$. It is now clear that the module on the right side of the last displayed isomorphism is finite, as it is a quotient of two finitely generated free $\Z_\ell$--modules of equal rank.

This proves the exactness of \eqref{key sequence} and along the way we have also managed to prove that two of the four modules appearing in this sequence, namely the last two, are finite.  If we show that either one of the two remaining modules is finite, then the finiteness of the other will follow. We will prove that $T_\ell(J_S)(-n)_\Gamma$ is finite.

Using Lemma \ref{finiteness of invariants} it is easy to see that $T_\ell(J_S)(-n)_\Gamma$ is finite if and only if $1$ is not an eigenvalue for the action of $\gamma$ on $T_\ell(J_S)(-n)\tensor \Q_\ell$.  From Proposition \ref{ses of jacobians}, we have a short exact sequence $$0\ra T_\ell(\tau_S)(-n) \ra T_\ell(J_S)(-n) \ra T_\ell(J) (-n)\ra 0.$$
Now, recall that the Riemann hypothesis for the curve $Z$ (a theorem due to A. Weil in this context) says that the action of $\gamma$ on $T_\ell(J)\tensor_{\Z_\ell} \Q_\ell$ has eigenvalues which are algebraic integers independent of $\ell$ and of absolute value $(q^\alpha)^{1/2}$.  It follows that the eigenvalues for the action of $\gamma$ on $T_\ell(J)(-n) \tensor_{\Z_\ell}\Q_\ell$ all have absolute value $(q^\alpha)^{1/2-n}$. (See Remark \ref{eigenvalues of twists}.) Since $n$ is an integer, $1$ is not an eigenvalue of this action.  Similarly, all the eigenvalues of $\gamma$ on $T_\ell(\tau_S)(-n)$ are equal to $(q^\alpha)^{(1-n)}$.  Since $n>1$, we therefore see that $1$ is not an eigenvalue of the action of $\gamma$ on $T_\ell(J_S)(-n)$ and this finishes the proof.
\end{proof}

We intend to apply Proposition \ref{four term lemma} to the sequence obtained by dualizing sequence \eqref{key sequence}.  It will be convenient to check that sequence \eqref{key sequence} satisfies the hypotheses of Proposition \ref{four term lemma} and then dualize.  Proposition \ref{projective dimension argument} ensures that Proposition \ref{four term lemma} will still apply to the dualized sequence.  The only hypotheses that remain to be checked are that the two modules appearing in the middle of the sequence have projective dimension at most $1$.

\begin{proposition}
Both $(\mathrm{Div}(T)\tensor \Z_\ell(-n))_\Gamma$ and $T_\ell(\mc{M}_{T, S})(-n)_\Gamma$ have projective dimension equal to 1 over $\Z_\ell[G]$.
\end{proposition}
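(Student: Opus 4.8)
The plan is to produce, for each of the two modules, a projective resolution over $\Z_\ell[G]$ of length at most $1$. Both modules are finite by the preceding proposition, and since a nonzero $\Z_\ell[G]$-module of finite cardinality is never projective (any projective $\Z_\ell[G]$-module is $\Z_\ell$-free, hence $\Z_\ell$-torsion-free), establishing that each has projective dimension $\le 1$ over $\Z_\ell[G]$ will automatically upgrade to equality.

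For $(\mathrm{Div}(T)\tensor\Z_\ell(-n))_\Gamma$ this is immediate from the computation already made above. Applying $(\,\cdot\,)_\Gamma$ to \eqref{divT} and invoking \eqref{coinv-divT} together with $c_\ell(\widetilde\sigma_v)=q^{d_v}$ gives a $\Z_\ell[G]$-module isomorphism $(\mathrm{Div}(T)\tensor\Z_\ell(-n))_\Gamma\isom\bigoplus_{v\in T_0}\Z_\ell[G]/\brkt{1-q^{n d_v}\sigma_v}$, in which each $1-q^{n d_v}\sigma_v$ is a non-zero-divisor in $\Z_\ell[G]$. Each summand thus admits the length-$1$ free resolution $0\ra\Z_\ell[G]\xra{1-q^{n d_v}\sigma_v}\Z_\ell[G]\ra\Z_\ell[G]/\brkt{1-q^{n d_v}\sigma_v}\ra0$, and since projective dimension of a finite direct sum is the supremum of those of the summands, $\mathrm{pd}_{\Z_\ell[G]}$ of the whole module is $\le1$.

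For $T_\ell(\mc{M}_{T,S})(-n)_\Gamma$ the key is to show that $M:=T_\ell(\mc{M}_{T,S})(-n)$ is a projective $\Z_\ell[H]$-module, where $H:=G(\mc{K}/\mc{K}')$. One cannot apply Theorem \ref{GP theorem}(1) to $\mc{M}_{T,S}$ directly, since its divisor part is $T$, and $T$ need not contain the primes ramifying in $\mc{K}/\mc{K}'$ (indeed $S_0\cap T_0=\emptyset$). Instead I use the duality of Proposition \ref{pairing}, which gives a $\Z_\ell[[\mc{G}]]$-isomorphism $M^*\isom T_\ell(\mc{M}_{S,T})(n-1)$. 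Now $T_\ell(\mc{M}_{S,T})$ is $\Z_\ell[H]$-projective by Theorem \ref{GP theorem}(1), whose hypotheses \emph{do} hold for $\mc{M}_{S,T}$ because $S$ contains the ramified primes; and since $c_\ell$ is trivial on $H$, Tate twisting leaves the $\Z_\ell[H]$-module structure unchanged, so $M^*$ is $\Z_\ell[H]$-projective. As $M$ is finitely generated and $\Z_\ell$-free, $\Z_\ell$-linear duality $(\,\cdot\,)^*=\mathrm{Hom}_{\Z_\ell}(\,\cdot\,,\Z_\ell)$ satisfies $(M^*)^*\isom M$ and carries projective $\Z_\ell[H]$-modules to projective $\Z_\ell[H]$-modules (using $\mathrm{Hom}_{\Z_\ell}(\Z_\ell[H],\Z_\ell)\isom\Z_\ell[H]$ as $\Z_\ell[H]$-modules), so $M=(M^*)^*$ is $\Z_\ell[H]$-projective.

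It remains to pass from the $\Z_\ell[H]$-projectivity of $M$ to $\mathrm{pd}_{\Z_\ell[G]}(M_\Gamma)\le1$. First, $M$ is $\Z_\ell$-free and $M_\Gamma$ is finite, so $M^\Gamma=0$ by Lemma \ref{finiteness of invariants}. For any subgroup $P\le G$ with preimage $\mc{P}\le\mc{G}$ (so $\mc{P}/\Gamma=P$), the Hochschild--Serre spectral sequence for $1\ra\Gamma\ra\mc{P}\ra P\ra1$ collapses — using $\mathrm{cd}(\Gamma)=1$ and $M^\Gamma=0$ — to isomorphisms $H^i(P,M_\Gamma)\isom H^{i+1}(\mc{P},M)$ for $i\ge0$. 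Since $H\cap\mc{P}$ is finite and $M$ is projective, hence cohomologically trivial, over $\Z_\ell[H\cap\mc{P}]$, while $\mc{P}/(H\cap\mc{P})$ is a closed subgroup of $\mc{G}/H\isom\widehat\Z$ and so has $\ell$-cohomological dimension $\le1$, the Hochschild--Serre spectral sequence for $1\ra H\cap\mc{P}\ra\mc{P}\ra\mc{P}/(H\cap\mc{P})\ra1$ gives $H^j(\mc{P},M)=0$ for $j\ge2$. Taking $P$ an $\ell$-Sylow subgroup of $G$, we obtain $\widehat H^1(P,M_\Gamma)=\widehat H^2(P,M_\Gamma)=0$, so $M_\Gamma$ is cohomologically trivial over $\Z_\ell[G]$ by Nakayama's criterion; and a cohomologically trivial $\Z_\ell[G]$-module has projective dimension $\le1$, its first syzygy being $\Z_\ell$-free and cohomologically trivial, hence $\Z_\ell[G]$-projective. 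The step I expect to be the real obstacle is precisely the one flagged above: $\mc{M}_{T,S}$ lies outside the range of Theorem \ref{GP theorem}, and it is the duality pairing of Proposition \ref{pairing} that lets one get around this; everything after that is standard homological algebra over $\Z_\ell[G]$ and $\Z_\ell[[\mc{G}]]$.
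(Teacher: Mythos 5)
Your argument is correct, and it handles the easy module $(\mathrm{Div}(T)\otimes\Z_\ell(-n))_\Gamma$ exactly as the paper does, but for $T_\ell(\mc{M}_{T,S})(-n)_\Gamma$ you take a genuinely different route once $\Z_\ell[H]$-projectivity of $M:=T_\ell(\mc{M}_{T,S})(-n)$ is in hand. Both you and the paper obtain that projectivity the same way, via the duality $M^*\isom T_\ell(\mc{M}_{S,T})(n-1)$ of Proposition~\ref{pairing} (you are right that this is the one essential observation: Theorem~\ref{GP theorem}(1) does not apply directly to $\mc{M}_{T,S}$ because $T$ avoids the ramified primes). From there the paper makes an Iwasawa-theoretic reduction: it notes that, the prime-to-$\ell$ part of $GL_m(\Z_\ell)$ being finite, the $\Gamma$-action on $M$ factors through $G(\mc{K}_\infty(\mu_N)/\mc{K}_0')\isom\Gamma_\ell'\times H'$ for suitable $N$, realizes $M$ as a $\Lambda[H']$-module with $\Lambda=\Z_\ell[[\Gamma_\ell']]$, invokes the criterion of \cite{CS} (no finite $\Lambda$-submodules plus $H$-cohomological triviality) to produce a two-term $\Lambda[H']$-projective resolution, and then passes to $\Gamma_{\ell,N}$-coinvariants (using $M^\Gamma=0$) to get a two-term $\Z_\ell[G]$-projective resolution of $M_\Gamma$. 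Your argument instead stays with continuous cohomology of the full profinite group $\mc{G}$: the degree-shifting collapse of Hochschild--Serre for $1\ra\Gamma\ra\mc{P}\ra P\ra 1$ (from $\mathrm{cd}_\ell(\Gamma)=1$ and $M^\Gamma=0$) combined with the vanishing of $H^{\ge 2}(\mc{P},M)$ coming from the sequence $1\ra H\cap\mc{P}\ra\mc{P}\ra\mc{P}/(H\cap\mc{P})\ra 1$ shows $\widehat H^1(P,M_\Gamma)=\widehat H^2(P,M_\Gamma)=0$ for an $\ell$-Sylow $P$, whence $M_\Gamma$ is $G$-cohomologically trivial by Nakayama's criterion, and $\mathrm{pd}_{\Z_\ell[G]}\le 1$ follows by the standard first-syzygy argument. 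Your variant avoids both the $GL_m(\Z_\ell)$ observation and the \cite{CS} criterion, which is tidier; the price is leaning on Hochschild--Serre for continuous cohomology of compact $\Z_\ell$-modules over a $\widehat\Z$-quotient, a correct but not entirely formal input, whereas the paper's reduction lands in the concrete Iwasawa-algebra setting $\Lambda[H']$ where the needed homological facts are already in place in \cite{CS}.
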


\begin{proof}
Theorem \ref{GP theorem} implies that $T_\ell(\mathcal M_{S, T})$ is $\Z_\ell[H]$-projective for $H=G(\mc{K}/\mc{K}')$.  Proposition 1 of \S3, Chapter 9 in \cite{Local-Fields} then implies that $T_\ell(\mathcal M_{S, T})(n-1)^*$ is $\Z_\ell[H]$--projective as well.  Proposition \ref{pairing} now implies that $T_\ell(\mathcal M_{T, S})(-n)$ is also $\Z_\ell[H]$--projective.

Let $\mc{K}_\infty, \mc{K}_\infty'$ denote the cyclotomic $\Z_\ell$-extensions of $\mc{K}_0$ and $\mc{K}_0'$, respectively. By definition, $\mc{K}_\infty$ and $\mc{K}_\infty'$ are the maximal pro-$\ell$ subextensions of $\mc{K}$ and $\mc{K'}$, respectively and the Galois groups $\Gamma_\ell:=G(\mc{K}_\infty/\mc{K}_0)$ and $\Gamma_\ell' := G(\mc{K}_\infty'/\mc{K}_0')$ are isomorphic to $\Z_\ell$.  Now, $\mathcal K$ is the compositum of the linearly disjoint cyclotomic $\Z_r$--extensions of $\mathcal K_0$, for all primes $r$. Consequently, we can write $\Gamma \isom \widehat{\Z} \isom \prod_r \Z_r$, where $r$ runs over all the primes.  As $T_\ell(\mathcal M_{T,S})$ is a free $\Z_\ell$-module, say of rank $m$, the action of $\Gamma$ on $T_\ell(\mathcal M_{T, S})$ factors through ${\rm Aut}_{\Z_\ell}\,T_\ell(\mathcal M_{T, S})\simeq GL_m(\Z_\ell)$.
It is easy to see that the prime-to-$\ell$ part of $GL_m(\Z_\ell)$ is finite, which implies that the image of $\prod_{r\neq \ell} \Z_r \subseteq \widehat{\Z}$ in $GL_m(\Z_\ell)$ is finite.

This shows that the action of $\Gamma$ actually factors through the Galois group of a subextension of $\mc{K}/\mc{K}_0'$ which contains and is finite over $\mc{K}_\infty$. In other words, for $N$ sufficiently large, $T_\ell(\mathcal M_{T, S})$ is a module over $\Z_\ell[[G(\mc{K}_\infty(\mu_N)/\mc{K}_0')]]$. We fix such an $N$ with the additional property that $\ell\mid N$.  Then, $\mu_{\ell^\infty} \subseteq \mc{K}_\infty(\mu_N)$ and therefore $T_\ell(\mathcal M_{T, S})(-n)$ is a $\Z_\ell[[G(\mc{K}_\infty(\mu_N)/\mc{K}_0')]]$-module.  Let $\Gamma_{\ell, N}:= G(\mc{K}_\infty(\mu_N)/\mc{K}_0)$. Then, we obviously have equalities
$$T_\ell(\mathcal M_{T, S})(-n)_\Gamma = T_\ell(\mathcal M_{T, S})(-n)_{\Gamma_{\ell, N}}, \qquad T_\ell(\mathcal M_{T, S})(-n)^\Gamma = T_\ell(\mathcal M_{T, S})(-n)^{\Gamma_{\ell, N}}.$$

Recall that we have defined $H=G(\mc{K}/\mc{K}')$.  Let us write $H' = G(\mc{K}_\infty(\mu_N)/\mc{K}_\infty')$.  It is easy to see that $H$ is isomorphic to a subgroup of $H'$ and that the quotient $H'/H$ has order co-prime to $\ell$.  There is an exact sequence
$$0\ra H' \ra G(\mc{K}_\infty(\mu_N)/\mc{K}_0') \ra \Gamma_\ell' \ra 0$$
and the fact that $\Gamma_\ell'$ is topologically cyclic implies that this sequence is split.  We can therefore write
$$\Z_\ell[[G(\mc{K}_\infty(\mu_N)/\mc{K}_0')]] \isom\Z_\ell[[\Gamma_\ell']][H'].$$
Now, Proposition 2.2 and Lemma 2.3 in \cite{CS} lead to the following.

\begin{proposition} With notations as above, let $\Lambda:=\Z_\ell[[\Gamma_\ell']]$ and let $M$ be a finitely generated $\Lambda[H']$--module. Then, the following are equivalent.
\begin{enumerate}
\item ${\rm pd}_{\Lambda[H']}M=1$.
\item $M$ has no non-trivial finite $\Lambda$--submodules and $M$ is $H$--cohomologically trivial.
\end{enumerate}
\end{proposition}

\begin{proof} See Proposition 2.2 and Lemma 2.3 in \cite{CS}. \end{proof}

We apply the above Proposition to the $\Lambda[H']$--module $T_\ell(\mathcal M_{T, S})(-n)$.  Since $T_\ell(\mathcal M_{T, S})(-n)$ is $H$-cohomologically trivial (see Theorem \ref{GP theorem}(1)), the fact that $\ell \nmid |H'/H|$ combined with a standard inflation--restriction argument implies that $T_\ell(\mathcal M_{T, S})(-n)$ is $H'$--cohomologically trivial as well.  It is clear that $T_\ell(M_{T, S})(-n)$ has no finite $\Lambda$ submodules (as it is $\Z_\ell$--free). The above Proposition implies that there is a short exact sequence of $\Lambda[H']$--modules
$$ 0 \ra P_1 \ra P_0 \ra T_\ell(\mathcal M_{T, S})(-n) \ra 0$$
where $P_1, P_0$ are projective over $\Lambda[H']$.

The isomorphism $G(\mc{K}_\infty(\mu_N)/\mc{K}_0')/\Gamma_{\ell, N}\isom G$ is clear and so Lemma \ref{snake} implies that there is an exact sequence of $\Z_\ell[G]$-modules
\begin{equation}\label{sequence-proj}
T_\ell(\mathcal M_{T, S})(-n)^{\Gamma_{\ell, N}} \ra (P_1)_{\Gamma_{\ell, N}} \ra (P_0)_{\Gamma_{\ell, N}} \ra T_\ell(\mathcal M_{T, S})(-n)_{\Gamma_{\ell, N}}\ra 0.
\end{equation}

Now, Lemma \ref{finiteness of invariants} implies that $T_\ell(\mathcal M_{T, S})(-n)^{\Gamma_{\ell, N}}=T_\ell(\mathcal M_{T, S})(-n)^{\Gamma}=0$. Indeed,
we have already shown that $T_\ell(\mathcal M_{T, S})(-n)_{\Gamma}$ is finite, which implies that $T_\ell(\mathcal M_{T, S})(-n)^{\Gamma}$ is finite. However, since $T_\ell(\mathcal M_{T, S})(-n)$ has no finite $\Z_\ell$-submodules,
$T_\ell(\mathcal M_{T, S})(-n)^{\Gamma}$ must be trivial.

Since the modules $(P_i)_{\Gamma_{\ell, N}}$ are clearly projective over $\Z_\ell[G]$, exact sequence \eqref{sequence-proj} implies that $\mathrm{pd}_{\Z_\ell[G]}(T_\ell(\mathcal M_{T, S})(-n)_\Gamma)=1$,
as required.

 Next, we deal with $(\mathrm{Div}(T)\tensor \Z_\ell(-n))_\Gamma$.  In the previous proposition, we showed that $(\mathrm{Div}(T)\tensor \Z_\ell(-n))_\Gamma$ is isomorphic as a $\Z_\ell[G]$-module to a direct sum of modules of the form $\Z_\ell[G]/\brkt{1-c_\ell(\widetilde{\sigma}_v)^n \sigma_v}$.  As $1-c_\ell(\widetilde{\sigma}_v)^n \sigma_v$ is not a zero-divisor in $\Z_\ell[G]$, this implies that we have an exact sequence of $\Z_\ell[G]$-modules
$$0 \ra \Z_\ell[G] \xra{1-c_\ell(\widetilde{\sigma}_v)^n\sigma_v} \Z_\ell[G] \ra \Z_\ell[G]/\brkt{1-c_\ell(\widetilde{\sigma}_v)^n \sigma_v} \ra 0.$$
This shows that $\Z_\ell[G]/\brkt{1-c_\ell(\widetilde{\sigma}_v)^n \sigma_v}$ has projective dimension $1$ over $\Z_\ell[G]$ and therefore the same is true of $\mathrm{Div}(T) \tensor Z_\ell(-n)_\Gamma$.  This concludes the proof.
\end{proof}

Applying $\mathrm{Hom}_{\Z_\ell}(-, \Q_\ell/\Z_\ell)$ to \eqref{key sequence} we arrive at the exact sequence
\begin{eqnarray}\label{sequence of duals}
0\longrightarrow (\Z_\ell(-n)_\Gamma)^\vee \longrightarrow ((\mathrm{Div}(T)\tensor \Z_\ell(-n))_\Gamma)^\vee \longrightarrow\\
\nonumber \qquad \longrightarrow  (T_\ell(\mc{M}_{T, S})(-n)_\Gamma)^\vee \longrightarrow (T_\ell(J_S)(-n)_\Gamma)^\vee \longrightarrow 0.\end{eqnarray}
We have observed that this sequence satisfies the hypotheses of Proposition \ref{four term lemma}.

In the remainder of this section we adopt the convention that all Fitting ideals are taken over $\Z_\ell[G]$ unless specified.  Applying Proposition \ref{four term lemma} to sequence \eqref{sequence of duals} we have an equality of ideals
\begin{eqnarray}\label{fundamentalEquality}
 \mathrm{Fit}(((\Z_\ell(-n)_\Gamma)^\vee)^\wedge)\cdot  \mathrm{Fit}((T_\ell(\mc{M}_{T, S})(-n)_\Gamma)^\vee)  &=&  \\
\nonumber  =\mathrm{Fit}(((\mathrm{Div}(T)\tensor\Z_\ell(-n))_\Gamma)^\vee) \cdot \mathrm{Fit}((T_\ell(J_S)(-n)_\Gamma)^\vee) &&  
\end{eqnarray}
What remains is to calculate each of these Fitting ideals individually and fit all the pieces together.
By combining \eqref{divT} and \eqref{coinv-divT} we get a $\Z_\ell[G]$--module isomorphism
$$\mathrm{Div}(T)\tensor \Z_\ell(-n) \isom \bigoplus_{v\in T_0} \Z_\ell[[\mc{G}]]/\brkt{1-c_\ell(\widetilde{\sigma}_v)^n \sigma_v}.$$
Now, Proposition \ref{properties of fitting ideals} parts $(2)$ and $(3)$ imply that
$$\mathrm{Fit}(\mathrm{Div}(T)\tensor\Z_\ell(-n))_\Gamma) = \brkt{\prod_{v\in T_0}(1-c_\ell(\widetilde{\sigma}_v)^n\cdot\sigma_v)} = \brkt{\prod_{v\in T_0}(1-Nv^n\cdot\sigma_v)}.$$
If we now apply Proposition \ref{projective dimension argument} we see that we have an equality of ideals
\begin{eqnarray}
\nonumber \mathrm{Fit}((\mathrm{Div}(T)\tensor\Z_\ell(-n))_\Gamma^\vee) = \brkt{\iota(\prod_{v\in T_0}(1-Nv^n\cdot\sigma_v))}=\\
\nonumber = \brkt{\prod_{v\in T_0}(1-Nv^n\cdot\sigma_v^{-1})} = \brkt{\delta_{T_0}(q^{n-1})}.
\end{eqnarray}
Applying Lemma \ref{duality}, and Proposition \ref{pairing} we get that
$$(T_\ell(\mc{M}_{T, S})(-n)_\Gamma)^\vee \isom (T_\ell(\mc{M}_{T, S})(-n)^*)_\Gamma \isom T_\ell(\mc{M}_{S,T})(n-1)_\Gamma.$$
Combining Theorem \ref{GP theorem} with Corollary \ref{linear algebra} we have that $$\mathrm{Fit}_{\Z_\ell[[\mc{G}]]}(T_\ell(\mc{M}_{S, T})(n-1)) = \brkt{t_{1-n}(\vartheta_{S_0, T_0}^{(\infty)})}.$$
Applying Proposition \ref{Lvalues}, we then have that
$$\mathrm{Fit}(T_\ell(\mc{M}_{S, T})(n-1)_\Gamma) = \brkt{\pi(t_{1-n}(\vartheta_{S_0, T_0}^{(\infty)}))} = \brkt{\delta_{T_0}(q^{n-1})\cdot \Theta_{S_0}(q^{n-1})}.$$
The \'{e}tale cohomology groups appear via Proposition \ref{EtaleandJacobian} which says that
$$(\Z_\ell(-n)_\Gamma)^\vee\isom \mathrm{H}^1_{\acute{e}t}(\mc{O}_{\mc{K}_0, S_0}, \Z_\ell(n)) \textnormal{ and } (T_\ell(J_S)(-n)_\Gamma)^\vee \isom \mathrm{H}^2_{\acute{e}t}(\mc{O}_{\mc{K}_0,S_0}, \Z_\ell(n)).$$
Finally, Lemma \ref{fitting and annhilators} implies that $$\mathrm{Fit}(((\Z_\ell(-n)_\Gamma)^\vee)^\wedge) = \mathrm{Fit}((\Z_\ell(-n)_\Gamma)^\vee).$$
Combining all these calculations we can rewrite \eqref{fundamentalEquality} as
$$\mathrm{Fit}(\mathrm{H}^1_{\acute{e}t}(\mc{O}_{\mc{K}_0, S_0}, \Z_\ell(n))) \cdot \delta_{T_0}(q^{n-1})\cdot \Theta_{S_0}(q^{n-1})=\hspace{2in}$$
$$\hspace{1.5in}=\delta_{T_0}(q^{n-1}) \cdot  \mathrm{Fit}(\mathrm{H}^2_{\acute{e}t}(\mc{O}_{\mc{K}_0, S_0}, \Z_\ell(n))).$$
We have seen that $\delta_{T_0}(q^{n-1})$ generates the Fitting ideal of a torsion finitely generated $\Z_\ell[G]$-module with projective dimension 1, and so Proposition \ref{projective dimension argument} implies that it is not a zero-divisor in $\Z_\ell[G]$.  We are therefore entitled to cancel the $\delta_{T_0}(q^{n-1})$ term from both sides of the equation.  This results in the equality
$$\mathrm{Fit}_{\Z_\ell[G]}(\mathrm{H}^1_{\acute{e}t}(\mc{O}_{\mc{K}_0, S_0}, \Z_\ell(n)))\cdot \Theta_{S_0}(q^{n-1})=\mathrm{Fit}_{\Z_\ell[G]}(\mathrm{H}^2_{\acute{e}t}(\mc{O}_{\mc{K}_0, S_0}, \Z_\ell(n)))$$
and this concludes the proof of part (1) of the theorem.

\hfill  $\square$

\subsection{Part (2), $\ell=p$} Recall that in this case we have to prove that
\begin{equation}\label{goal-p}\Theta_{S_0}(q^{n-1})\in\Z_p[G]^\times,\qquad \text{for all $n\in\Z_{\geq 2}$,}\end{equation}
where the notations are as above. We need the following.

\begin{lemma}
We have an inclusion $$1+p\Z_p[G]\subseteq \Z_p[G]^\times.$$
\end{lemma}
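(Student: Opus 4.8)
The plan is to exploit the fact that the finite group ring $\Z_p[G]$ is $p$--adically complete. First I would observe that, since $G$ is finite, $\Z_p[G]$ is a free $\Z_p$--module of finite rank, and hence the natural map $\Z_p[G]\to\varprojlim_k \Z_p[G]/p^k\Z_p[G]$ is an isomorphism; equivalently, $\Z_p[G]$ is separated and complete for the $p$--adic topology. In particular $p^k\Z_p[G]\to 0$ as $k\to\infty$, and every Cauchy sequence for this topology has a limit in $\Z_p[G]$.

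Given $x\in p\Z_p[G]$, I would then note that $x^k\in p^k\Z_p[G]$ for every $k\geq 0$, so the partial sums $s_N:=\sum_{k=0}^{N}(-x)^k$ form a Cauchy sequence in $\Z_p[G]$. By completeness, the geometric series $y:=\sum_{k=0}^{\infty}(-x)^k$ converges to an element of $\Z_p[G]$. A telescoping identity gives $(1+x)\cdot s_N = 1-(-x)^{N+1}$, and letting $N\to\infty$, using $(-x)^{N+1}\to 0$, yields $(1+x)\,y=1$; commutativity of $\Z_p[G]$ gives $y\,(1+x)=1$ as well. Hence $1+x\in\Z_p[G]^\times$, which is precisely the asserted inclusion $1+p\Z_p[G]\subseteq\Z_p[G]^\times$.

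There is essentially no obstacle in this argument: the only point requiring care is the $p$--adic completeness of $\Z_p[G]$, and this is immediate from the fact that $\Z_p[G]$ is finitely generated (indeed free) as a $\Z_p$--module. I expect this lemma to be used in the sequel to reduce the proof of \eqref{goal-p} to showing that $\Theta_{S_0}(q^{n-1})$ reduces to a unit in $\F_p[G]$, or more precisely lies in $1+p\Z_p[G]$ after a suitable normalization.
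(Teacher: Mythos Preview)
Your argument is correct: $\Z_p[G]$ is free of finite rank over the complete local ring $\Z_p$, hence $p$--adically complete, and the geometric series $\sum_{k\ge 0}(-x)^k$ converges and furnishes an explicit inverse to $1+x$ whenever $x\in p\Z_p[G]$. The paper takes a different but equally short route: since $\Z_p[G]$ is integral over $\Z_p$ and $p\Z_p$ is the unique maximal ideal of $\Z_p$, the going--up theorem forces $p$ to lie in every maximal ideal of $\Z_p[G]$, i.e.\ in its Jacobson radical, whence $1+p\Z_p[G]\subseteq\Z_p[G]^\times$. Your approach is more hands--on and actually produces the inverse; the paper's approach is more structural and identifies $p\Z_p[G]$ as sitting inside the Jacobson radical, a fact that is occasionally useful in its own right. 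Either way the lemma is immediate, and your anticipated application---reducing \eqref{goal-p} to showing the relevant $L$--value lies in $1+p\Z_p[G]$---is exactly how the paper proceeds.
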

\begin{proof}
Since $\Z_p[G]$ is an integral extension of $\Z_p$ and $p\Z_p$ is the only maximal ideal in $\Z_p$,
the going-up theorem of Cohen-Seidenberg
implies that $p$ belongs to the Jacobson radical of $\Z_p[G]$. Consequently, the inclusion in
the statement of the Lemma holds.
\end{proof}
Now, we fix a finite, non-empty set $T_0$ of closed points of $Z_0'$, such that $S_0$ and $T_0$ are disjoint.
\begin{corollary}
For all $n\in\Z_{\geq 2}$, we have
$$\delta_{T_0}(q^{n-1})\in\Z_p[G]^\times.$$
\end{corollary}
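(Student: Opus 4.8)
The plan is to reduce the claim $\delta_{T_0}(q^{n-1})\in\Z_p[G]^\times$ to the inclusion $1+p\Z_p[G]\subseteq\Z_p[G]^\times$ just established. Recall that
$$\delta_{T_0}(q^{n-1})=\prod_{v\in T_0}(1-Nv^{\,n-1}\cdot\sigma_v^{-1}),$$
with $Nv=q^{d_v}$ the absolute norm of $v$. Since a product of units is a unit, it suffices to show that each individual factor $1-Nv^{\,n-1}\sigma_v^{-1}$ lies in $\Z_p[G]^\times$. First I would observe that $Nv=q^{d_v}$ is a power of $p$ (indeed $q=|\kappa_0|$ is a power of $p$), so $Nv^{\,n-1}$ is a positive power of $p$ whenever $n\geq 2$; in particular $Nv^{\,n-1}\in p\Z_p$. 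Hence $Nv^{\,n-1}\sigma_v^{-1}\in p\Z_p[G]$, and therefore
$$1-Nv^{\,n-1}\sigma_v^{-1}\in 1+p\Z_p[G]\subseteq\Z_p[G]^\times$$
by the preceding Lemma. Taking the product over all $v\in T_0$ gives $\delta_{T_0}(q^{n-1})\in\Z_p[G]^\times$, as claimed.

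I do not anticipate any genuine obstacle here: the only point that requires a word of justification is that $q$ is a power of $p$, which is immediate from the definition of $\kappa_0$ as the exact field of constants of $\mc{K}_0'$ (a finite extension of $\F_p$), so $q=|\kappa_0|=p^f$ for some $f\geq 1$. Everything else is the elementary fact that $1+p\Z_p[G]$ is a subgroup of units, already in hand. The one mild subtlety worth flagging is the hypothesis $n\geq 2$: for $n=1$ the exponent $n-1$ vanishes and the factor becomes $1-\sigma_v^{-1}$, which is a zero-divisor, not a unit — so the restriction to $n\geq 2$ is essential and should be used exactly at the step where we conclude $Nv^{\,n-1}\in p\Z_p$.

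It may be worth remarking, for later use in proving \eqref{goal-p}, that this Corollary reduces Part (2) of Theorem \ref{main theorem} to showing that $\Theta_{S_0, T_0}(q^{n-1})\in\Z_p[G]^\times$, via the factorization $\Theta_{S_0, T_0}(q^{-s})=\delta_{T_0}(q^{-s})\cdot\Theta_{S_0}(q^{-s})$ specialized at $s=1-n$; but that is the content of the argument to follow rather than part of the present proof.
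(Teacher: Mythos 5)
Your overall strategy is exactly the paper's: observe that each Euler factor of $\delta_{T_0}(q^{n-1})$ lies in $1+p\Z_p[G]$ because $q$ is a power of $p$, then invoke the preceding Lemma and the fact that units form a group. That part is correct and essentially word-for-word what the authors do.

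There is, however, a computational slip in your explicit formula. From the definition
$\delta_{T_0}(q^{-s})=\prod_{v\in T_0}\bigl(1-\sigma_v^{-1}(q^{d_v})^{1-s}\bigr)$,
setting $s=1-n$ gives $(q^{d_v})^{1-s}=(q^{d_v})^{n}=Nv^{n}$, so
$$\delta_{T_0}(q^{n-1})=\prod_{v\in T_0}\bigl(1-Nv^{\,n}\,\sigma_v^{-1}\bigr),$$
not $\prod_{v}(1-Nv^{\,n-1}\sigma_v^{-1})$. (Equivalently, looking at the Euler product \eqref{eulerproduct}, the $T_0$-factor is $1-\sigma_v^{-1}(qu)^{d_v}$, and substituting $u=q^{n-1}$ yields $q^{nd_v}=Nv^{n}$.) This off-by-one error in the exponent does not invalidate your argument for $n\geq 2$, since both $Nv^{n-1}$ and $Nv^{n}$ are then positive powers of $p$; but it does make your remark about the role of the hypothesis $n\geq 2$ inaccurate. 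With the correct exponent, the factor at $n=1$ is $1-Nv\,\sigma_v^{-1}$, which is still in $1+p\Z_p[G]$; the argument only genuinely fails at $n=0$, where the exponent vanishes and one gets the zero-divisor $1-\sigma_v^{-1}$. The restriction to $n\geq 2$ is inherited from the surrounding theorem (where it is needed to avoid the pole of $\Theta_{S_0}$ at $s=1$ and to ensure finiteness of the relevant cohomology), not because this particular Corollary would fail at $n=1$.
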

\begin{proof} Since $q^{n-1}$ is a power of $p$, we have
$$\delta_{T_0}(q^{n-1})=\prod_{v\in T_0}(1- q^{n\cdot d_v}\cdot \sigma_v^{-1})\in 1+p\Z_p[G]$$
and the above Lemma implies the desired result.\end{proof}

Since we already know that $\Theta_{S_0,T_0}(u)\in\Z[G][u]$, we have
$$\delta_{T_0}(q^{n-1})\cdot\Theta_{S_0}(q^{n-1})=\Theta_{S_0, T_0}(q^{n-1})\in\Z_p[G].$$
Consequently, the Corollary above implies that \eqref{goal-p} is equivalent to
\begin{equation}\label{newgoal-p}\Theta_{S_0, T_0}(q^{n-1})\in\Z_p[G]^\times,\qquad \text{for all $n\in\Z_{\geq 2}$.}\end{equation}
In order to prove this, we need to recall a few facts from \cite{GP1}. First, recall that if $P$ is a finitely generated projective module over a commutative ring $R$,
$f\in{\rm End}_R(P)$ and $u$ is a variable, then the polynomial
$${\rm det}_R(1-f\cdot u\mid P)\in R[u]$$
is well defined and it is equal to
$${\rm det}_{R[u]}(1_{P\oplus Q}\otimes 1-(f\oplus 1_Q)\otimes u\mid (P\oplus Q)\otimes_R R[u]),$$
where $Q$ is any finitely generated $R$--module such that $(P\oplus Q)$ is $R$--free and $1_Q$  and $1_{P\oplus Q}$ are the identity
endomorphisms of $Q$ and $P\oplus Q$, respectively. Consequently, we have
\begin{equation}\label{projective-general}{\rm det}_R(1-f\cdot u\mid P)\in 1+u\cdot R[u],\end{equation}
for all $R$, $P$ and $f$ as above.

Next, let $\overline Z:=Z_0\times_{\kappa_0}\kappa$ be the smooth, projective (not necessarily connected) curve
over ${\rm Spec}(\kappa)$ obtained by base change from $Z_0$. Also, let $\overline S$ and $\overline T$ be the (finite, disjoint)
sets of closed points on $\overline Z$ lying above points in $S_0$ and $T_0$, respectively. We denote by $\overline{\mathcal M}$ the Picard $1$--motive
associated to the data $(\overline Z, \overline S, \overline T)$. By definition, $\overline{\mathcal M}$ is defined over $\kappa_0$ and $G$ acts
naturally on $\overline{\mathcal M}$ via $\kappa_0$--automorphisms. Consequently, the $\ell$--adic realizations $T_{\ell}(\overline{\mathcal M})$
of $\overline{\mathcal M}$ are endowed with natural $\Z_\ell[G]$--module structures, for all prime numbers $\ell$.
The following was proved in \cite{GP1}.

\begin{theorem}[Greither-Popescu] The following hold.

\begin{enumerate}
\item For all primes $\ell$, $T_\ell(\overline{\mathcal M})$ is a projective $\Z_l[G]$--module.
\item For all primes $\ell\ne p$, we have
$$\Theta_{S_0, T_0}(u)={\rm det}_{\Z_{\ell}[G]}(1-\gamma_q\cdot u\mid T_\ell(\overline{\mathcal M})).$$
\end{enumerate}
\end{theorem}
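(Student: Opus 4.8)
The plan is to reduce both assertions to the explicit description, worked out in \cite{GP1}, of the $\ell$-adic realization $T_\ell(\overline{\mathcal M})$ of the Picard $1$-motive $\overline{\mathcal M}=[\mathrm{Div}^0(\overline S)\to J_{\overline T}]$ as a module carrying commuting actions of $G$ and of the $q$-power Frobenius $\gamma_q$, and then to invoke, for $(1)$, a cohomological-triviality criterion and, for $(2)$, the Grothendieck--Lefschetz trace formula. Recall that $\overline{\mathcal M}$ is the base change to $\kappa$ of the Picard $1$-motive attached to the curve $Z_0$ and the $G$-stable finite sets of closed points above $S_0$ and $T_0$, so that $T_\ell(\overline{\mathcal M})$ is a finitely generated $\Z_\ell[G]$-module which, by \eqref{motive jacobian ses} and Proposition \ref{ses of jacobians} applied over $\kappa$, carries a three-step weight filtration with graded pieces the lattice $\mathrm{Div}^0(\overline S)\tensor\Z_\ell$ (Frobenius weight $0$), the abelian part $T_\ell(\overline J)$ (weight $-1$) and the toric part $T_\ell(\tau_{\overline T})$ (weight $-2$).

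For part $(1)$ I would first note that $T_\ell(\overline{\mathcal M})$ is $\Z_\ell$-free: the two outer graded pieces are visibly so, and $T_\ell(\overline J)$ is $\Z_\ell$-free because in characteristic $p$ the $\ell$-adic Tate module of the Jacobian of a smooth projective curve is always free over $\Z_\ell$ (vanishing of the characteristic $p$ analogue of the Iwasawa $\mu$-invariant; see the Remark following Theorem \ref{main theorem}). It then suffices to prove that $T_\ell(\overline{\mathcal M})$ is $G$-cohomologically trivial, since a finitely generated $\Z_\ell[G]$-module that is $\Z_\ell$-free and $G$-cohomologically trivial is automatically $\Z_\ell[G]$-projective: its reduction modulo $\ell$ has finite projective dimension over the self-injective algebra $\F_\ell[G]$, hence is projective there, and that projectivity lifts back to $\Z_\ell[G]$ because $T_\ell(\overline{\mathcal M})$ is $\Z_\ell$-free and $\Z_\ell[G]$ is $\ell$-adically complete. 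The cohomological triviality is exactly the content of the Greither--Popescu module-structure theorem for Picard $1$-motives — the present assertion is the version for the full group $G$ of the $\Z_\ell[H]$-projectivity over the geometric subgroup $H$ recorded in Theorem \ref{GP theorem}(1) — and the decisive hypothesis is that $S_0$ contain every prime that ramifies in $\mc{K}_0/\mc{K}_0'$: this forces $Z_0\to Z_0'$ to be finite étale away from $S_0$, so $G$ acts \emph{freely} on the open curve $\overline Z\setminus\overline S$ and the $\ell$-adic realization of a $1$-motive supported there has projective dimension at most one over $\Z_\ell[G]$. It is essential to use the whole $1$-motive here: the individual graded pieces are not $\Z_\ell[G]$-projective — by Chevalley--Weil the abelian part carries a non-trivial $G$-fixed summand — and it is the interplay between the ramification contributions in $\mathrm{Div}^0(\overline S)$ and in $J_{\overline T}$ that makes the total realization projective.

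For part $(2)$, with $\ell\ne p$, I would compute $\det_{\Z_\ell[G]}(1-\gamma_q u\mid T_\ell(\overline{\mathcal M}))$ weight piece by weight piece, using that the characteristic polynomial of Frobenius is multiplicative along short exact sequences. On the weight-$0$ piece $\gamma_q$ acts through the permutation of the geometric points of $\overline S$ above $S_0$, and $\det(1-\gamma_q u\mid\mathrm{Div}^0(\overline S)\tensor\Z_\ell)$ equals $\prod_{v\in S_0}(1-\sigma_v^{-1}u^{d_v})$ divided by the factor $(1-u)$ recording the degree-zero condition; on the weight-$(-2)$ piece $T_\ell(\tau_{\overline T})=(\bigoplus_{w\in\overline T}\Z_\ell(1))/\Z_\ell(1)$ the Tate twist makes $\gamma_q$ act with an extra factor $q$, so $\det(1-\gamma_q u\mid T_\ell(\tau_{\overline T}))$ equals $\prod_{v\in T_0}(1-\sigma_v^{-1}(qu)^{d_v})$ divided by the factor $(1-qu)$ coming from the quotient by $\kappa^\times$; and on the weight-$(-1)$ piece $\det(1-\gamma_q u\mid T_\ell(\overline J))$ is the numerator of the $G$-equivariant zeta function of $Z_0$, which by Weil's Riemann hypothesis for $Z_0$ equals $(1-u)(1-qu)\cdot\prod_{v}(1-\sigma_v^{-1}u^{d_v})^{-1}$, the product running over all closed points $v$ of $Z_0'$ (with further connected-component factors when $\overline Z$ is disconnected, which cancel in the same manner). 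Multiplying the three contributions, the auxiliary $(1-u)$ and $(1-qu)$ cancel and one is left with $\prod_{v\in T_0}(1-\sigma_v^{-1}(qu)^{d_v})\cdot\prod_{v\notin S_0}(1-\sigma_v^{-1}u^{d_v})^{-1}$, which is precisely the Euler product \eqref{eulerproduct} defining $\Theta_{\mc{K}_0/\mc{K}_0',S_0,T_0}(u)$. This argument genuinely requires $\ell\ne p$: for $\ell=p$ the abelian piece $T_p(\overline J)$ has $\Z_p$-rank strictly smaller than $2g_{Z_0}$, so its Frobenius determinant is no longer the numerator of the zeta function of $Z_0$ and the identity in $(2)$ breaks down — only the projectivity statement $(1)$ survives at $p$, and that is all that is needed in the sequel.

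The main obstacle is the identification of $T_\ell(\overline{\mathcal M})$ as a concrete equivariant $\ell$-adic cohomology object, together with the proof of its $G$-cohomological triviality — the substance of \S\S 2--4 of \cite{GP1} — and, to a lesser extent, the correct treatment of the ramified primes and of the non-connected curve $\overline Z$ in the determinant computation of part $(2)$. The hypothesis that $S_0$ contain the ramified primes is exactly what makes the cover étale away from $S_0$ and thereby tames both of these points.
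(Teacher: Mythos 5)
The paper itself gives no proof of this result: it simply cites \cite{GP1}, where it appears as (roughly) Theorem 4.6 and its corollaries. Your proposal is therefore, by construction, a genuinely different route — you try to sketch the content of the cited theorem rather than defer to it. On balance the sketch is sound and shows you understand the mechanism, but two parts of it deserve scrutiny.

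For part (2) the structure of your argument — factor $T_\ell(\overline{\mathcal M})$ along its weight filtration $W_{-2}=T_\ell(\tau_{\overline T})$, $W_{-1}/W_{-2}=T_\ell(\overline J)$, $W_0/W_{-1}=\mathrm{Div}^0(\overline S)\otimes\Z_\ell$, compute $\det(1-\gamma_q u\mid\cdot)$ on each graded piece, and observe the cancellation of the auxiliary $(1-u)$, $(1-qu)$ factors — is exactly the right idea and is indeed how the identity is established in \cite{GP1}. One small misattribution: the factorization of the equivariant zeta function as (numerator of degree $2g$)$/((1-u)(1-qu))$ is a consequence of the Grothendieck--Lefschetz trace formula and the rationality of the zeta function, \emph{not} of Weil's Riemann Hypothesis; the RH (eigenvalues of absolute value $q^{1/2}$) is what makes the coinvariants finite later in the paper, and you should keep the two roles separate. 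You also gloss the case where $\overline Z$ is disconnected (``which cancel in the same manner''); this is true but not free, since the $H^0$ and $H^2$ contributions, as well as the ``degree-zero'' and ``mod $\kappa^\times$'' quotients, then become induced representations from $G(\kappa/\kappa\cap\mathcal K_0)$ rather than trivial lines, and the cancellation has to be verified at that level.

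For part (1) the reduction ``$\Z_\ell$-free $+$ $G$-cohomologically trivial $\Rightarrow$ $\Z_\ell[G]$-projective'' is correct and standard. But the cohomological triviality itself is the substantive claim, and you ultimately just invoke Greither--Popescu for it, which makes your treatment of (1) a paraphrase of the cited theorem rather than an argument. Moreover, the heuristic you offer in its place — that $S_0\supseteq\{\text{ramified primes}\}$ makes $G$ act freely on $\overline Z\setminus\overline S$, hence the $\ell$-adic realization ``supported there'' is of projective dimension $\le 1$ — does not actually cover the module in question: $\mathrm{Div}^0(\overline S)\otimes\Z_\ell$ lives precisely on $\overline S$, where $G$ need not act freely, so the freeness heuristic cannot by itself explain the projectivity of $T_\ell(\overline{\mathcal M})$. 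Your remark that the individual graded pieces are typically \emph{not} $\Z_\ell[G]$-projective and that one must use the whole $1$-motive is exactly the right caveat, and it in fact shows why the freeness-off-$S$ argument alone is insufficient: the nontrivial cancellation between the $\mathrm{Div}^0(\overline S)$ contribution and the $T_\ell(\overline J)$ contribution is what one must actually prove, and that is the content of the GP structure theorem you cite.
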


\begin{remark} Statement (2) in the Theorem above can be extended to the prime $p$ as well, provided that one replaces the $p$--adic realization
$T_p(\overline{\mathcal M})$ with the crystalline realization
$T_{\rm crys}(\overline{\mathcal M})$ of $\overline{\mathcal M}$,
as defined in \cite{Andreatta-Popescu}. The module $T_{\rm
crys}(\overline{\mathcal M})$ is finitely generated over $W[G]$,
where $W$ is the ring of Witt vectors of $\kappa_0$. Although in
general $T_{\rm crys}(\overline{\mathcal M})$ is not projective
over $W[G]$ (see \cite{Andreatta-Popescu}), one still has
 $$\Theta_{S_0, T_0}(u)={\rm det}_{\C_p[G]}(1-\gamma_q\cdot u\mid T_{\rm crys}(\overline{\mathcal M})\otimes_W\C_p),$$
where $\C_p$ is the completion of the algebraic closure of $\Q_p$.
\end{remark}

Now, part (2) of the Theorem above (for a fixed prime $\ell\ne p$)
combined with \eqref{projective-general} and the fact that
$\Theta_{S_0, T_0}(u)\in\Z[G][u]$ imply that we have
$$\Theta_{S_0,T_0}(u)\in (1+u\cdot\Z_\ell[G][u])\cap \Z[G][u]=1+u\Z[G][u].$$
Consequently, since $q$ is a power of $p$, we have
$$\Theta_{S_0,T_0}(q^{n-1})\in 1+p\cdot\Z_p[G], \qquad\text{for all $n\in\Z_{\geq 2}$.}$$
Now, one applies the Lemma above to conclude that
\eqref{newgoal-p} holds. This concludes the proof of Theorem
\ref{main theorem}. \hfill$\square$
\medskip

As proved in the introduction, the following refinement of the characteristic $p$ analogue of the Coates--Sinnott conjecture
is a consequence of Theorem \ref{main theorem}.
\begin{corollary}
Let $n\geq 2$ be an integer.  Then
$$\mathrm{Fit}_{\Z[G]}(K_{2n-1}(\mc{O}_{\mc{K}_0, S_0})) \cdot \Theta_{\mc{K}_0/\mc{K}_0', S_0}(q^{n-1}) = \mathrm{Fit}_{\Z[G]}(K_{2n-2}(\mc{O}_{\mc{K}_0, S_0})).$$
\end{corollary}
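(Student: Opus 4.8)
The plan is to deduce this corollary directly from Theorem \ref{main theorem}, in exactly the same way that Theorem \ref{refined CS} was deduced in the introduction. Since the first Fitting ideal commutes with the base changes $\Z[G]\to\Z_\ell[G]$ (Proposition \ref{properties of fitting ideals}(4)), and an equality of ideals in $\Z[G]$ may be checked after tensoring with $\Z_\ell$ for every prime $\ell$, it suffices to prove the equality $\mathrm{Fit}_{\Z_\ell[G]}\bigl(K_{2n-1}(\mc{O}_{\mc{K}_0, S_0})\tensor\Z_\ell\bigr)\cdot\Theta_{\mc{K}_0/\mc{K}_0', S_0}(q^{n-1}) = \mathrm{Fit}_{\Z_\ell[G]}\bigl(K_{2n-2}(\mc{O}_{\mc{K}_0, S_0})\tensor\Z_\ell\bigr)$ for each prime $\ell$, splitting into the cases $\ell\ne p$ and $\ell=p$.

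For $\ell\ne p$ I would apply the Quillen--Lichtenbaum conjecture (Theorem \ref{QL}) to obtain $\Z_\ell[G]$--module isomorphisms $K_{2n-i}(\mc{O}_{\mc{K}_0, S_0})\tensor\Z_\ell\isom H^i_{\acute{e}t}(\mc{O}_{\mc{K}_0, S_0}, \Z_\ell(n))$ for $i=1,2$; being $\Z_\ell[G]$--linear, these preserve Fitting ideals, so the desired equality is precisely part $(1)$ of Theorem \ref{main theorem}. For $\ell=p$ I would show that both $K$--groups become trivial after $\tensor\Z_p$, whence both Fitting ideals equal the unit ideal $\Z_p[G]$; since part $(2)$ of Theorem \ref{main theorem} gives $\Theta_{\mc{K}_0/\mc{K}_0', S_0}(q^{n-1})\in\Z_p[G]^\times$, the identity $\Z_p[G]\cdot(\text{a unit})=\Z_p[G]$ then closes the case. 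The vanishing $K_m(\mc{O}_{\mc{K}_0, S_0})\tensor\Z_p=0$ for all $m>0$ is argued in two steps: first, the Geisser--Levine theorem (Theorem $1.3$ in \cite{Kolster}), together with the $5$--term exact sequence following Theorem $1.5$ of loc.cit., shows that $K_m(\mc{O}_{\mc{K}_0, S_0})$ has no $p$--torsion; second, it has no free $\Z$--summand, for otherwise $K_m(\mc{O}_{\mc{K}_0, S_0})\tensor\Z_\ell$ would be infinite for every $\ell$, contradicting the finiteness of $H^i_{\acute{e}t}(\mc{O}_{\mc{K}_0, S_0}, \Z_\ell(n))$ for $\ell\ne p$ via one further appeal to Quillen--Lichtenbaum.

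The only point requiring genuine input is the $p$--adic vanishing of the $K$--groups, but this is not really an obstacle: it rests entirely on imported facts (Geisser--Levine, the finiteness of $\ell$--adic \'etale cohomology of rings of $S$--integers in function fields, and Quillen--Lichtenbaum). Everything else is formal, so the corollary follows at once from Theorem \ref{main theorem} together with the standard functoriality of Fitting ideals; in particular, no argument beyond those already assembled in the body of the paper is needed.
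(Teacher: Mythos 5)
Your proposal is correct and follows the paper's own argument essentially verbatim: the paper proves this corollary by pointing to the proof of Theorem \ref{refined CS} in the introduction, which is exactly the reduction to $\Z_\ell[G]$ for each prime $\ell$, Quillen--Lichtenbaum plus Theorem \ref{main theorem}(1) for $\ell\ne p$, and the vanishing $K_m(\mc{O}_{\mc{K}_0, S_0})\tensor\Z_p=0$ (via Geisser--Levine for the $p$--torsion and finiteness of $\ell$--adic \'etale cohomology via Quillen--Lichtenbaum to rule out a free part) together with Theorem \ref{main theorem}(2) for $\ell=p$. No substantive differences from the paper's proof.
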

\begin{proof}
See the proof of Theorem \ref{refined CS} in the Introduction.
\end{proof}

\bibliographystyle{plain}
\bibliography{Dodge-PopescuBib}

\end{document}